\newtheorem{theorem}{Theorem}[section]
\newtheorem{lemma}[theorem]{Lemma}
\newtheorem{proposition}[theorem]{Proposition}
\newtheorem{corollary}[theorem]{Corollary}
\theoremstyle{remark}
\newtheorem{remark}[theorem]{Remark}
\theoremstyle{definition}
\newtheorem{definition}[theorem]{Definition}
\DeclareMathOperator{\core}{core}   \DeclareMathOperator{\Sym}{Sym}
 \DeclareMathOperator{\Alt}{Alt}\DeclareMathOperator{\Fix}{Fix}
\newcommand{\C}{\mathscr{C}}
\title[Smallest Degree for a Direct Product Obeying an Inequality]{The Smallest Faithful Permutation Degree for a Direct Product obeying an Inequality Condition}
\date{}
\author{David Easdown}
\address{School of Mathematics and Statistics, University of Sydney, NSW 2006, Australia}
\email{david.easdown@sydney.edu.au}
\author{Neil Saunders}
\address{Heilbronn Institute for Mathematical Research, University of Bristol, School of Mathematics, University Walk, Bristol BS8 1TW, United Kingdom}
\email{neil.saunders@bristol.ac.uk}
\begin{document}

\thanks{\noindent{AMS subject classification (2000): 20B35}}
\thanks{\noindent{Keywords: Faithful Permutation Representations}}

\maketitle

\begin{abstract}
The minimal faithful permutation degree $\mu(G)$ of a finite group $G$ is the least nonnegative integer $n$ such that $G$ embeds
in the symmetric group $\Sym(n)$. 
Clearly $\mu(G \times H) \le \mu(G) + \mu(H)$ for all finite groups $G$ and $H$. Wright (1975) proves that equality occurs when $G$ and $H$ are nilpotent 
and exhibits an example of strict inequality where $G\times H$
embeds in $\Sym(15)$. 
 Saunders (2010) produces an infinite family of examples of permutation groups $G$ and $H$ where
$\mu(G \times H) < \mu(G) + \mu(H)$, including the example of Wright's as a special
case. The smallest groups in Saunders' class embed in $\Sym(10)$. 
In this paper we prove that 10 is minimal in the sense that
$\mu(G \times H) = \mu(G) + \mu(H)$ for all groups $G$ and $H$
such that $\mu(G\times H)\le 9$.
\end{abstract}

\section{Introduction}
Throughout this paper all groups are assumed to be finite. The {\it minimal faithful permutation degree} $\mu(G)$ of a group $G$ is the smallest nonnegative integer such that $G$ embeds in the symmetric group $\Sym(n)$. 
Recall that the {\it core} of a subgroup $H$ of $G$, denoted by
$\core(H)$, is the largest normal subgroup of $G$ contained in $H$, and that $H$ is {\it core-free} if $\core(H)$ is trivial. Thus $\mu(G)$ is the
smallest sum of indexes for a collection of subgroups $G_1,\ldots,G_\ell$ of $G$ such $\cap_{i=1}^{\ell} G_i$ is core-free.
The subgroups $G_i$ are the respective point-stabilisers for the action of $G$ on its orbits and letters in the $i$th orbit may be identified with cosets of $G_i$.
If $\ell = 1$ then the representation is transitive and $G_1$ is a core-free subgroup.  

For any groups $G$ and $H$, we always have the inequality
\begin{equation} \label{eq:directsum}
\mu(G \times H) \leq \mu(G) + \mu(H).
\end{equation}
Johnson and Wright (see \cite{J71, W75}) developed a general theory of minimal degrees of groups and 
described conditions for when equality occurs in \eqref{eq:directsum}. They proved this to be the case 
when $G$ and $H$ have coprime orders and when $G$ and $H$ are nilpotent.
Easdown and Praeger (see \cite{EP88}) showed that equality holds when $G$ and $H$ are direct products of simple groups. 
Wright in \cite{W75} asked whether equality occurs in  \eqref{eq:directsum} always and an example 
exhibiting strict inequality was attached as an addendum, where $G$ and $H$ are given as subgroups of $\Sym(15)$. In that example, $G$ and $H$ generate a subgroup
$GH$ of $\Sym(15)$ that is an internal direct product of $G$ and $H$.

Saunders showed in \cite{S08} that the example in [8] fits into a general family that provides infinitely many instances of
strict inequality in \eqref{eq:directsum}.  There $G$ could be taken to be the complex reflection group $G(p,p,q)$, 
where $p$ and $q$ are distinct odd primes satisfying certain other conditions, 
and $H$ the centraliser of the minimally embedded image of $G$ in $\Sym(pq)$. In this family, it was always the case that  $$\mu(G(p,p,q))=\mu\left(G(p,p,q)\times
C_{\Sym(pq)}(G(p,p,q))\right)=pq,$$ and so examples of strict inequality in \eqref{eq:directsum} was assured. 
The smallest minimal degree of a direct product in this family was $10$, furnished by taking $G$ to be $G(2,2,5)$ and 
thus $H=C_{\Sym(10)}(G) \cong C_2$. In fact, one can take $G$ to be a split extension of the product of $4$ copies of $C_2$ 
(a so-called {\it deleted permutation module} for $\Sym(5)$ over $\mathbb{F}_2$) by any subgroup of $\Sym(5)$ 
that contains the $5$-cycle (see \cite{S08} for a description of these complex reflection groups and exposition of the examples).

The main result of this paper (Theorem \ref{theorem:degree9} below) is that if $G$ and $H$ are groups such that $G\times H$ embeds in $\Sym(9)$ then 
equality occurs in \eqref{eq:directsum}.
Thus, to find groups $G$ and $H$ such that $G\times H$ embeds in $\Sym(n)$ and strict inequality occurs in \eqref{eq:directsum}, one requires $n\ge 10$.

\section{Background and Preliminaries} \label{section:background}

Wright in \cite{W75} considered the class $\mathscr{C}$ of groups $G$ such that
$\mu(G)=\mu(G_1)$ for some nilpotent subgroup $G_1$ of $G$.
Wright noted (see Claim $1$ and Claim $2$ of \cite{W75}) that
all symmetric, alternating and dihedral groups are members of $\mathscr{C}$.
Because equality occurs in \eqref{eq:directsum} whenever $G$ and $H$ are nilpotent (see Theorem 2 of \cite{W75}), the following lemma is immediate and used often below without comment.
\begin{lemma}\label{lemma:wright}
If $G, H\in\mathscr{C}$ then $G \times H \in \mathscr{C}$ and $\mu(G \times H)=\mu(G) + \mu(H)$.
\end{lemma}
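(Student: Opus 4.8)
The plan is simply to unwind the definition of $\mathscr{C}$ and then invoke the nilpotent case of equality in \eqref{eq:directsum}. Since $G\in\mathscr{C}$ and $H\in\mathscr{C}$, fix nilpotent subgroups $G_1\le G$ and $H_1\le H$ with $\mu(G_1)=\mu(G)$ and $\mu(H_1)=\mu(H)$. The subgroup $G_1\times H_1$ of $G\times H$ is a direct product of nilpotent groups, hence nilpotent, and a faithful action of $G\times H$ restricts to a faithful action of $G_1\times H_1$, so $\mu(G_1\times H_1)\le\mu(G\times H)$.

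Next I would sandwich $\mu(G\times H)$ between two quantities that coincide. On one side, \eqref{eq:directsum} together with the choice of $G_1$ and $H_1$ gives $\mu(G\times H)\le\mu(G)+\mu(H)=\mu(G_1)+\mu(H_1)$; and by the nilpotent case of equality (Theorem 2 of \cite{W75}) applied to $G_1$ and $H_1$, the right-hand side equals $\mu(G_1\times H_1)$. Combining this with $\mu(G_1\times H_1)\le\mu(G\times H)$ from the previous paragraph forces $\mu(G\times H)=\mu(G_1\times H_1)=\mu(G_1)+\mu(H_1)=\mu(G)+\mu(H)$, which is the asserted equality; and since $G_1\times H_1$ is then a nilpotent subgroup of $G\times H$ realising $\mu(G\times H)$, we also obtain $G\times H\in\mathscr{C}$. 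There is no real obstacle here: the only inputs are the definition of $\mathscr{C}$, the fact that a direct product of nilpotent groups is nilpotent, the monotonicity $\mu(K)\le\mu(L)$ for $K\le L$, and Wright's equality for nilpotent groups, which together make the statement immediate --- as the text preceding it already observes.
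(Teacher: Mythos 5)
Your argument is correct and is exactly the argument the paper leaves implicit when it calls the lemma ``immediate'' from Wright's equality for nilpotent groups: choose nilpotent $G_1\le G$, $H_1\le H$ realising $\mu(G)$ and $\mu(H)$, and sandwich $\mu(G\times H)$ between $\mu(G_1\times H_1)$ and $\mu(G_1)+\mu(H_1)$. Nothing further is needed.
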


We now briefly state some background results that we will need during the course of our later proofs. Here, we follow the notation of \cite{DM96},
where further exposition and complete proofs can be found.

\begin{definition}\label{definition:block}
Let $G$ be a subgroup of $\Sym(A)$.
\begin{enumerate}
\item[(i)] We say that $G$ acts {\it semi-regularly} if, for all $x \in A$, $x^g=x$ implies $g=1$.
\item[(ii)] We say $G$ acts {\it regularly} if $G$ acts transitively and semi-regularly. 
\item[(iii)] If $G$ acts transitively then a {\it block} for $G$ is a subset $B$ of $A$ such that, for all $g \in G$, $B^g \cap B = \varnothing$ or $B^g=B$.
\item[(iv)] If $H$ is a subgroup of $G$ then the {\it set of fixed points of $H$ in $A$} is
$$\Fix(H)=\{x \in A\,|\,x^h=x,\,\text{for all} \, \,h \in H \}.$$
\end{enumerate}
\end{definition}

\begin{theorem} \label{theorem:centraliser}
Let $G$ be a transitive subgroup of \,$\Sym(A)$ and $H$ the stabiliser of a point in $A$. Then $C:=C_{\Sym(A)}(G) \cong N_{G}(H)/H$ and $C$
acts semi-regularly on $A$. \end{theorem}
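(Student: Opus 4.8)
The plan is to prove the two assertions separately, beginning with semi-regularity, which is both the easier statement and a convenient tool for the isomorphism. Throughout I fix a base point $\alpha \in A$ with stabiliser $H = G_\alpha$ and write the action exponentially, so that $x^{gh} = (x^g)^h$; with this convention an element $\sigma \in \Sym(A)$ lies in $C$ precisely when $(x^g)^\sigma = (x^\sigma)^g$ for all $x \in A$ and all $g \in G$.

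For semi-regularity, suppose $\sigma \in C$ fixes some point $\beta \in A$. Given an arbitrary $\gamma \in A$, transitivity of $G$ furnishes $g \in G$ with $\gamma = \beta^g$, and the centralising identity then gives $\gamma^\sigma = (\beta^g)^\sigma = (\beta^\sigma)^g = \beta^g = \gamma$. Hence $\sigma$ fixes every point of $A$, so $\sigma = 1$; thus no nontrivial element of $C$ has a fixed point, which is exactly the assertion that $C$ acts semi-regularly on $A$.

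For the isomorphism I would identify $A$ with the right coset space $\{Hg : g \in G\}$, taking $\alpha = H$ and letting $G$ act by right multiplication $(Hg)^x = Hgx$. Semi-regularity together with transitivity shows that each $\sigma \in C$ is determined by the single image $\alpha^\sigma$, since $(\alpha^g)^\sigma = (\alpha^\sigma)^g$; so $\sigma \mapsto \alpha^\sigma$ embeds $C$ into $A$ and the task reduces to identifying its image. Writing $\alpha^\sigma = Hk$, the requirement that the prescription $(\alpha^g)^\sigma = Hkg$ be independent of the chosen representative $g$ of the coset $Hg$ forces $Hkh = Hk$ for every $h \in H$, that is $k \in N_G(H)$; conversely, for each such $k$ the rule $\sigma_k \colon Hg \mapsto Hkg$ is a well-defined permutation (well defined precisely because $k$ normalises $H$, so that $kH = Hk$) and one checks directly that it commutes with every right multiplication, hence lies in $C$. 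Thus the admissible images are exactly the $H$-fixed points $\Fix(H) = \{Hk : k \in N_G(H)\}$. Finally, defining $\Phi \colon N_G(H) \to C$ by $\Phi(k) = \sigma_{k^{-1}}$ (the inverse being inserted to make composition compatible with the right-action convention) gives a surjective homomorphism whose kernel is $\{k : Hk = H\} = H$, and the First Isomorphism Theorem yields $C \cong N_G(H)/H$.

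The routine parts here are the semi-regularity argument and the verification that each $\sigma_k$ genuinely centralises $G$. The main work, and the step most sensitive to error, is the two-way well-definedness computation that pins the image of the embedding $C \hookrightarrow A$ down to $\Fix(H)$ and identifies it with the cosets $Hk$ for $k \in N_G(H)$; care is also needed with the left/right and inversion conventions so that $\Phi$ emerges as a bona fide homomorphism rather than an anti-homomorphism with kernel $H$.
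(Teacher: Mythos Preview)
Your proof is correct and follows the standard argument. The paper itself does not prove this theorem---it is stated as a background result with a reference to Dixon--Mortimer---but the very map you construct, namely $k \mapsto (Hg \mapsto Hk^{-1}g)$, is exactly the isomorphism the paper invokes explicitly in its proof of the subsequent Corollary~\ref{corollary:centraliser1}, so your approach aligns precisely with what the authors use.
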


\begin{corollary} \label{corollary:centraliser1}
Let $G$ be a subgroup of \,$\Sym(A)$ where $A=A_1 \cup \ldots \cup A_k$ and the $A_i$ are the orbits of $G$, all of different sizes.
Then
$$C_{\Sym(A)}(G) \cong N_{G}(H_1)/H_1 \times \ldots \times N_{G}(H_k)/H_k$$ where $H_i$ is the stabiliser of a point in $A_i$ for
$i=1,\ldots,k$.
\end{corollary}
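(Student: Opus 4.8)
The plan is to deduce this directly from Theorem~\ref{theorem:centraliser} by first using the distinct-orbit-sizes hypothesis to show that every element of $C:=C_{\Sym(A)}(G)$ fixes each orbit setwise, so that $C$ splits as a direct product over the orbits.

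First I would show that, for $c\in C$ and any orbit $A_i$, the image $A_i^{c}$ is again a $G$-orbit of the same size: for $a\in A_i$ and $g\in G$ we have $a^{cg}=a^{gc}$, so $(A_i^{c})^{g}=(A_i^{g})^{c}=A_i^{c}$ and $c$ conjugates the transitive $G$-action on $A_i$ to a transitive $G$-action on $A_i^{c}$, with $|A_i^{c}|=|A_i|$. Since no two orbits have the same size, this forces $A_i^{c}=A_i$ for every $i$, i.e. $c$ lies in the block subgroup $\Sym(A_1)\times\dots\times\Sym(A_k)\le\Sym(A)$. Writing $c=(c_1,\dots,c_k)$ and, for $g\in G$, $g=(g_1,\dots,g_k)$ with $g_i$ the restriction of $g$ to $A_i$, the elements $c$ and $g$ commute in $\Sym(A)$ precisely when $c_ig_i=g_ic_i$ for each $i$; hence, letting $G_i\le\Sym(A_i)$ be the transitive image of $G$ on $A_i$, we get $C=C_{\Sym(A_1)}(G_1)\times\dots\times C_{\Sym(A_k)}(G_k)$.

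It then remains to apply Theorem~\ref{theorem:centraliser} to each transitive group $G_i\le\Sym(A_i)$, giving $C_{\Sym(A_i)}(G_i)\cong N_{G_i}(\overline{H}_i)/\overline{H}_i$ with $\overline{H}_i$ the stabiliser in $G_i$ of a point $x_i\in A_i$, and to rewrite this in terms of $G$ itself: since the kernel $K_i$ of the action of $G$ on $A_i$ is contained in the stabiliser $H_i$ of $x_i$ in $G$, the correspondence theorem gives $\overline{H}_i=H_i/K_i$ and $N_{G_i}(\overline{H}_i)=N_G(H_i)/K_i$, so $N_{G_i}(\overline{H}_i)/\overline{H}_i\cong N_G(H_i)/H_i$; assembling the factors yields the claimed isomorphism $C\cong N_G(H_1)/H_1\times\dots\times N_G(H_k)/H_k$. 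The proof is routine; the only points needing a little care are this final identification between $G$ and its transitive constituents, and the observation that the hypothesis of distinct orbit sizes is genuinely used in the first step — without it a centralising element could interchange equal-sized orbits and $C$ would acquire additional wreath-type factors.
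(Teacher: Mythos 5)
Your proof is correct, and it shares the paper's key idea — the observation that distinct orbit sizes force every centralising permutation to preserve each orbit setwise — but it organises the reduction to the transitive case differently. The paper does not decompose $C_{\Sym(A)}(G)$ first; instead it constructs explicit maps $\Phi_i\colon N_G(H_i)/H_i\to C_{\Sym(A_i)}(G|_{A_i})$ in coset language (sending $x$ to $H_ig\mapsto H_ix^{-1}g$), glues them to an embedding $\Phi$ of the product of the $N_G(H_i)/H_i$ into the centraliser, and then proves surjectivity by showing that an arbitrary $\theta\in C_{\Sym(A)}(G)$ preserves orbits (your first step) and that each restriction $\theta|_{A_i}$ has the form $H_i\mapsto H_ig_i$ with $g_i\in N_G(H_i)$. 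This keeps all normalisers and stabilisers inside $G$ itself and in effect re-derives the transitive-case isomorphism rather than citing Theorem~\ref{theorem:centraliser}. Your version instead establishes $C=\prod_i C_{\Sym(A_i)}(G_i)$ up front and then invokes Theorem~\ref{theorem:centraliser} for each transitive constituent $G_i$; the price is the extra bookkeeping with the kernel $K_i$ of the action on $A_i$ and the correspondence/third-isomorphism identifications $\overline{H}_i=H_i/K_i$, $N_{G_i}(\overline{H}_i)=N_G(H_i)/K_i$, which you carry out correctly and which the paper's direct construction avoids. Both arguments are sound; yours is the more modular (the transitive theorem is used as a black box), the paper's is the more self-contained and explicit.
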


\begin{proof}
Without loss of generality, we may suppose that  $A_i=\{H_{i}g\,|\,g
\in G \}$ for each $i$.
The map that takes $x\in N_{G}(H_i)$ to the permutation $H_{i}g \mapsto H_{i}x^{-1}g$ for $g\in G$ induces an isomorphism
$$\Phi_{i}:N_{G}(H_i)/H_i \longrightarrow C_i$$ where $C_{i}:=C_{\Sym(A_i)}(G_{|A_i})$. Gluing these maps together, we get an embedding
$$\Phi=(\Phi_1, \ldots ,\Phi_k): N_{G}(H_1)/H_1 \times \ldots \times N_{G}(H_k)/H_k \longrightarrow C_{\Sym(A)}(G),$$ where the images are
juxtaposed in the usual way since the orbits are disjoint. We claim that $\Phi$ is onto. \vspace{0pt}

Let $\theta$ be an arbitrary element of $C_{\Sym(A)}(G)$.  Suppose first that, for some $i\neq j$, 
there exists an $x \in A_i$ such that $x\theta \in A_j$.
Since $\theta$ centralises $G$, the restriction of $\theta$ to $A_i$ is an injective map into $A_j$, so that $|A_i| \leq |A_j|$.
But $\theta^{-1}$ also centralises $G$, so similarly $|A_j| \leq
|A_i|$, whence $|A_i|=|A_j|$, contradicting that the orbits have different sizes. Hence the orbits of $\theta$ respect the partition of $A$ given by $A_1, \ldots,A_k$. For each $i=1, \ldots, k,$ $\theta_{|A_{i}}: H_i \mapsto H_{i}g_{i},$ for some $g_{i} \in G$ and it quickly follows that $g_i \in N_{G}(H_i)$
and $\theta_{|A_{i}}=(H_{i}g_{i}^{-1})\Phi_{i}$. Hence $\theta=(H_{1}g_{1}^{-1},\ldots,H_{k}g_{k}^{-1})\Phi$, completing the proof that $\Phi$ is onto.
\end{proof}

The following propositions are well-known (see \cite{DM96} or \cite{C99} for example). 

\begin{proposition} \label{proposition:fix}
Let $G$ be a transitive subgroup of \,$\Sym(A)$ and $H$ the stabiliser of a point. Then $\Fix(H)$ is a block for $G$, the induced permutation
group on the block $\Fix(H)$ is regular and $|\Fix(H)|=|N_{G}(H)/H|$.
\end{proposition}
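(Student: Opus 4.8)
The plan is to identify $\Fix(H)$ explicitly as an orbit of $N_G(H)$, from which all three assertions follow quickly. Let $\alpha$ be the point stabilised by $H$, so $H=G_\alpha$. First I would exploit transitivity to write every point of $A$ as $\alpha^g$ for some $g\in G$. Then $\alpha^g\in\Fix(H)$ if and only if $\alpha^{gh}=\alpha^g$ for all $h\in H$, i.e. $ghg^{-1}\in H$ for all $h\in H$, i.e. $gHg^{-1}\subseteq H$. Since $G$ is finite, $gHg^{-1}\subseteq H$ forces $gHg^{-1}=H$, which says precisely $g\in N_G(H)$. Hence $\Fix(H)=\alpha^{N_G(H)}$, the $N_G(H)$-orbit of $\alpha$.

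Next, for the block property I would compute the setwise stabiliser of $\Fix(H)$ in $G$. Suppose $\Fix(H)^g\cap\Fix(H)\neq\varnothing$ and pick $\beta$ in the intersection. Writing $\beta=\alpha^{n_1}$ and $\beta=\delta^g=\alpha^{n_2g}$ with $\delta\in\Fix(H)$ and $n_1,n_2\in N_G(H)$, we get $\alpha^{n_2gn_1^{-1}}=\alpha$, so $n_2gn_1^{-1}\in G_\alpha=H\subseteq N_G(H)$, whence $g\in N_G(H)$ and therefore $\Fix(H)^g=\alpha^{N_G(H)g}=\alpha^{N_G(H)}=\Fix(H)$. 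Thus for every $g\in G$ the set $\Fix(H)^g\cap\Fix(H)$ is either empty or equal to $\Fix(H)$, so $\Fix(H)$ is a block, and the setwise stabiliser of this block is exactly $N_G(H)$.

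Finally, the permutation group induced on $\Fix(H)$ is $N_G(H)$ modulo the kernel $K$ of its action on $\Fix(H)$. By the very definition of $\Fix(H)$, every element of $H$ fixes every point of $\Fix(H)$, so $H\subseteq K$; conversely $K$ fixes $\alpha\in\Fix(H)$, so $K\subseteq N_G(H)\cap G_\alpha=H$ (using $H\subseteq N_G(H)$), giving $K=H$. Hence the induced group is $N_G(H)/H$; it is transitive on $\Fix(H)$ since $\Fix(H)=\alpha^{N_G(H)}$, and the stabiliser in it of the point $\alpha$ is the image of $H$, which is trivial, so the action is regular. The orbit–stabiliser count then gives $|\Fix(H)|=|N_G(H):N_G(H)\cap G_\alpha|=|N_G(H):H|=|N_G(H)/H|$.

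I do not anticipate a serious obstacle here: the only points requiring care are the finiteness step ($gHg^{-1}\subseteq H\Rightarrow gHg^{-1}=H$), which is essential and fails for infinite groups, and the bookkeeping showing that the kernel of the action on the block is simultaneously $\supseteq H$ (definition of $\Fix$) and $\subseteq H$ (it fixes $\alpha$). One could alternatively deduce the statement from Theorem \ref{theorem:centraliser} via the isomorphism $C_{\Sym(A)}(G)\cong N_G(H)/H$ and the semi-regularity of the centraliser, but the direct argument above is shorter and self-contained.
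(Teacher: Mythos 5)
Your proof is correct. Note that the paper does not actually prove this proposition: it is stated as well-known, with the reader referred to \cite{DM96} or \cite{C99}, so there is no in-text argument to compare against. What you have written is essentially the standard textbook proof from those references, and it is complete: the identification $\Fix(H)=\alpha^{N_G(H)}$ (using transitivity and the finiteness step $gHg^{-1}\subseteq H\Rightarrow gHg^{-1}=H$, which you rightly flag as essential), the deduction that a nonempty intersection $\Fix(H)^g\cap\Fix(H)$ forces $g\in N_G(H)$ and hence $\Fix(H)^g=\Fix(H)$, the computation that the setwise stabiliser of the block is $N_G(H)$ with kernel exactly $H$ on the block (so the induced group in the sense used in Proposition \ref{proposition:block} is $N_G(H)/H$ acting with trivial point stabiliser, hence regularly), and the orbit--stabiliser count $|\Fix(H)|=|N_G(H):H|$. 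The alternative you mention, deducing the statement from Theorem \ref{theorem:centraliser} via semi-regularity of the centraliser, would be circular in spirit here, since in the sources that theorem and this proposition are proved by the same direct computation; your self-contained argument is the right choice.
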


\begin{proposition} \label{proposition:block}
Let $G$ be a transitive subgroup of\, $\Sym(A)$ with non-trivial block $B$. Let $G_{B}^{B}$ denote the induced permutation group on the block
$B$ and $\bar{G}$ denote the induced action on the set of blocks. Then $G$ embeds in the wreath product $G_{B}^{B} \wr \bar{G}$.
\end{proposition}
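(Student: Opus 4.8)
The plan is to realize the standard imprimitive embedding concretely, by fixing a block system and a transversal and then writing down the homomorphism into the wreath product explicitly. Since $B$ is a non-trivial block for the transitive group $G$, its translates $B^g$ (for $g \in G$) form a partition $\{B_1, \dots, B_m\}$ of $A$ into blocks of equal size, with say $B = B_1$; the induced action $\bar G$ permutes these $m$ blocks, and for each $g$ I write $i^{\bar g}$ for the index determined by $B_i^{\,g} = B_{i^{\bar g}}$. Because $G$ is transitive on $A$ it is transitive on the block system, so I can fix elements $t_1, \dots, t_m \in G$ with $t_1 = 1$ and $B_1^{\,t_i} = B_i$; these serve to identify each block $B_i$ with $B = B_1$.

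Next I would define the candidate embedding $\Psi\colon G \to G_B^B \wr \bar G$ into the imprimitive wreath product, whose base group $(G_B^B)^m$ is indexed by the blocks and whose top group $\bar G$ permutes those indices. For $g \in G$ and each index $i$, the element $t_i\, g\, t_{i^{\bar g}}^{-1}$ sends $B_1 \to B_i \to B_{i^{\bar g}} \to B_1$, hence lies in the setwise stabiliser $G_B$ of $B$ and so induces an element $h_i(g) \in G_B^B$. I set $\Psi(g) = \bigl((h_1(g), \dots, h_m(g));\, \bar g\bigr)$.

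The main work is checking that $\Psi$ is a homomorphism. Using the product rule $((a_i); \sigma)((b_i); \tau) = ((a_i\, b_{i^\sigma}); \sigma\tau)$ in the wreath product, the $i$-th base component of $\Psi(g)\Psi(g')$ is the product of the induced actions of $t_i g t_{i^{\bar g}}^{-1}$ and $t_{i^{\bar g}} g' t_{i^{\overline{gg'}}}^{-1}$, using $(i^{\bar g})^{\bar{g'}} = i^{\overline{gg'}}$; the middle transversal factors cancel, leaving the induced action of $t_i\, gg'\, t_{i^{\overline{gg'}}}^{-1}$, which is exactly the $i$-th component of $\Psi(gg')$, while the top component is $\bar g\, \bar g' = \overline{gg'}$. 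Injectivity is then immediate from faithfulness: if $\Psi(g)$ is trivial then $\bar g = 1$, so $g$ fixes each block setwise and each $h_i(g) = 1$, whence $g$ fixes every point of $A$; as $G \le \Sym(A)$ acts faithfully, $g = 1$.

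The only real obstacle is bookkeeping. The argument is conceptually routine, but one must pin down a single convention for how the top group permutes the base coordinates (left versus right) and carry it consistently through the cocycle identity $t_i g t_{i^{\bar g}}^{-1} \cdot t_{i^{\bar g}} g' t_{i^{\overline{gg'}}}^{-1} = t_i\, gg'\, t_{i^{\overline{gg'}}}^{-1}$; with a mismatched convention the homomorphism check silently fails. Everything else — the existence of the block system, the transversal $t_i$, and faithfulness of the action on $A$ — follows directly from the definitions.
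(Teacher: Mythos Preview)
Your argument is correct and is the standard construction of the imprimitive embedding. The paper, however, does not prove this proposition at all: it is stated without proof as a well-known background fact, with a reference to the textbooks of Dixon--Mortimer and Cameron. The proof you have written is precisely the one found in those references, so there is nothing to compare; you have simply supplied the details the authors chose to omit.
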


We list a few more technical observations here of a general nature relating to minimal embeddings which we will use repeatedly later on.

\begin{lemma} \label{lemma:L1}
Suppose $G$ is a subgroup of \,$\Sym(A_1) \times \ldots \times \Sym(A_k)$ such that $\mu(G)=|A_1|+ \ldots +|A_k|$.
\begin{enumerate}
\item[(i)] $\mu(G\pi)=\sum_{i \in X}|A_i|$ for any projection $\pi$ onto $\prod_{i \in X}\Sym(A_i)$ for $X$ any subset of $\{1,\ldots, k\}$. 
\item[(ii)] For all $i$ in $\{1,\ldots,k\}$, there exists an $\alpha \neq 1$ such that $(1, \ldots,\alpha,\ldots ,1)$ is contained in $G$, where
$\alpha$ is located in the $i$-th place. 
\item[(iii)] If $|A_1|=2$, then $G \cong C_2 \times H$ where $H$ is a subgroup of\, $\Sym(A_2) \times \ldots \times \Sym(A_k)$ and
$\mu(H)=|A_2|+\ldots+|A_k|$. 
\item[(iv)] If $|A_1|=3$, then $(\alpha,1,\ldots,1)$ is an element of $G$ for some $3$-cycle $\alpha$.
\item[(v)] If $|A_1|=4$ and $(\alpha, 1,\ldots ,1)$ is an element of $G$ for some $3$-cycle $\alpha$, then $G$ contains $\Alt(A_1)\times \{1\}\times\ldots\times\{1\}$.
\item[(vi)] If $|A_1|=4$, say $A_1=\{a,b,c,d\}$, and $((a\,b), 1,\ldots,1)\in G$, then $((c\,d), 1,\ldots,1)\in G$.
\item[(vii)] If $G\pi$ is transitive where $\pi$ is the projection onto the first coordinate and $|A_1|=p$ for some prime $p$ such that $p>|A_i|$
for all $i \in \{2,\ldots,k\}$, then $(\alpha,1,\ldots,1)$ is an element of $G$ for some $p$-cycle $\alpha$.
\end{enumerate}
\end{lemma}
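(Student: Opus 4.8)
My plan is to establish the seven parts in roughly the stated order, leaning on one preliminary observation and using (i) and (ii) as the workhorses. Since empty $A_i$ may be discarded and the case $G=1$ is vacuous, assume $G\neq1$ and each $A_i\neq\varnothing$. The key observation is that the image $G_i:=\rho_i(G)$ of $G$ under the $i$-th coordinate projection $\rho_i\colon G\to\Sym(A_i)$ is fixed-point-free on $A_i$: a common fixed point of $G_i$ would be fixed by $G$ acting on $A_1\cup\cdots\cup A_k$, and deleting it would give a faithful permutation representation of degree $\sum|A_i|-1$, contradicting $\mu(G)=\sum|A_i|$. I will also use throughout that $N_i:=\{g\in G:\rho_j(g)=1\ \text{for all }j\neq i\}$ is a normal subgroup of $G$, namely the set of elements of $G$ supported in the $i$-th coordinate, and that $\rho_i$ maps $N_i$ isomorphically onto a normal subgroup $\overline{N_i}$ of $G_i$.

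For (i), let $\pi^{c}$ be the projection onto the complementary coordinates; then $g\mapsto(g\pi,g\pi^{c})$ embeds $G$ into $G\pi\times G\pi^{c}$, so by \eqref{eq:directsum} we get $\sum_i|A_i|=\mu(G)\le\mu(G\pi)+\mu(G\pi^{c})\le\mu(G\pi)+\sum_{i\notin X}|A_i|$, and since trivially $\mu(G\pi)\le\sum_{i\in X}|A_i|$ this forces equality. For (ii), the kernel of the projection of $G$ onto $\prod_{j\neq i}\Sym(A_j)$ is precisely $N_i$; if it were trivial, (i) would give $\mu(G)=\sum_{j\neq i}|A_j|<\mu(G)$, so $N_i\neq1$. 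For (iii), when $|A_1|=2$, part (ii) forces $N_1=\Sym(A_1)\times\{1\}\times\cdots\times\{1\}$ (as $C_2$ has no proper nontrivial subgroup); writing $H$ for the image of $G$ under the projection forgetting the first coordinate, a short check using $N_1\leq G$ gives $G=\Sym(A_1)\times H$, and $\mu(H)=|A_2|+\cdots+|A_k|$ by (i).

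Parts (iv) and (v) go through the classification of small groups: in each, $\overline{N_1}$ is a nontrivial (by (ii)) normal subgroup of the fixed-point-free group $G_1$. If $|A_1|=3$, the fixed-point-free subgroups of $\Sym(3)$ are $\Alt(3)$ and $\Sym(3)$, whose nontrivial normal subgroups all contain $\Alt(3)$, so $N_1$ contains a $3$-cycle. If $|A_1|=4$ and $N_1$ already contains a $3$-cycle, then $G_1$ contains a $3$-cycle and, being fixed-point-free, is transitive (a subgroup of $\Sym(4)$ containing a $3$-cycle but not transitive fixes the fourth point), hence $\Alt(4)$ or $\Sym(4)$; their normal subgroups containing a $3$-cycle all contain $\Alt(4)$, so $\overline{N_1}\supseteq\Alt(A_1)$. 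Part (vii) is a Cauchy argument: $G_1=G\pi$ is transitive of prime degree $p$, so contains an element of order $p$, necessarily a $p$-cycle; lifting it to $g\in G$ of order $n=p^{a}m$ with $p\nmid m$ and setting $h=g^{m}$, the element $h$ has $p$-power order, $\rho_1(h)$ is still a $p$-cycle, and each $\rho_j(h)$ with $j\geq2$ has $p$-power order in $\Sym(A_j)$; since $|A_j|<p$ gives $p\nmid|A_j|!$, we get $\rho_j(h)=1$, so $h=(\alpha,1,\ldots,1)$ with $\alpha$ a $p$-cycle.

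Part (vi) is where I expect the real obstacle to lie. With $A_1=\{a,b,c,d\}$ and $((a\,b),1,\ldots,1)\in G$, I distinguish two cases. If $(a\,c)(b\,d)\in G_1$, then conjugating $((a\,b),1,\ldots,1)$ by any $g\in G$ with $\rho_1(g)=(a\,c)(b\,d)$ produces $((c\,d),1,\ldots,1)\in G$. Otherwise $G_1$ is fixed-point-free, contains $(a\,b)$, and omits $(a\,c)(b\,d)$; but a transitive subgroup of $\Sym(4)$ containing a transposition is either $\Sym(4)$ or the dihedral stabiliser of the partition $\{\{a,b\},\{c,d\}\}$, each of which contains $(a\,c)(b\,d)$, so $G_1$ is intransitive, and fixed-point-freeness forces its orbits on $A_1$ to be exactly $\{a,b\}$ and $\{c,d\}$. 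Then $G$ lies in $\Sym(\{a,b\})\times\Sym(\{c,d\})\times\prod_{j\geq2}\Sym(A_j)$ at the same minimal degree, so applying part (iii) first with the size-$2$ orbit $\{a,b\}$ and then again with $\{c,d\}$ yields an element of $G$ supported on $\{c,d\}$, i.e.\ $((c\,d),1,\ldots,1)\in G$. The delicate points are this case split, the classification of transitive subgroups of $\Sym(4)$ meeting the transpositions, and the slightly unexpected second application of (iii); the normal-subgroup lattices of $\Sym(3)$, $\Alt(4)$, $\Sym(4)$ needed in (iv)--(v) are routine, and everything else is formal manipulation of projections.
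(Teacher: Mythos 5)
Your proposal is correct, and parts (i), (ii), (iii) and (vii) run essentially as in the paper (pasting projections for (i), nontriviality of the kernel $N_i$ for (ii), the internal direct product $\langle g\rangle K$ for (iii), and a Cauchy/$p$-part argument for (vii) --- your extraction of the $p$-part $h=g^m$ is a slightly more careful version of the paper's one-line order argument). Where you genuinely diverge is in (iv)--(vi). The paper works element-by-element: for (iv) it takes $h\in G$ whose first coordinate moves $c$ and forms the commutator $[g,h]=([\alpha,\beta],1,\ldots,1)$, which is automatically supported on the first coordinate and is a $3$-cycle there; for (v) it conjugates $g$ by such an $h$ and notes $\langle g,g^h\rangle$ generates the alternating group; for (vi) it generates $(c\,d)$ inside $\langle\alpha,\beta,\gamma\rangle$. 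You instead observe that $\overline{N_1}=\rho_1(N_1)$ is a nontrivial \emph{normal} subgroup of the fixed-point-free group $G_1\le\Sym(A_1)$ and read off the answer from the normal-subgroup lattices of $\Alt(3)$, $\Sym(3)$, $\Alt(4)$, $\Sym(4)$; and for (vi) you split on whether $(a\,c)(b\,d)\in G_1$, using conjugation in one case and, in the other, the orbit structure $\{a,b\}\cup\{c,d\}$ to refine the coordinate decomposition and re-apply (ii)/(iii). Both routes are valid. The paper's commutator/conjugation computations are shorter and stay entirely inside $G$; your normal-subgroup argument buys uniformity across (iv) and (v), and your treatment of (vi) is arguably more transparent than the paper's, which asserts the existence of elements $(\beta,1,\ldots,1),(\gamma,1,\ldots,1)\in G$ supported on the first coordinate and moving $c$ and $d$ without spelling out why (one has to interpret this via conjugating $((a\,b),1,\ldots,1)$ back into $N_1$), whereas your orbit dichotomy plus reuse of (ii) closes that case cleanly.
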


\begin{proof} Put $n=|A_1|+\ldots+|A_k|$.
For (i), we observe that if $\mu(G\pi) < \sum_{i \in X}|A_i|$, then pasting projections gives an embedding of $G$ in $\Sym(n-1)$, contradicting that $\mu(G)=n$. 

For (ii), let $\pi$ be the projection onto $\prod_{j\neq i} \Sym(A_j)$. Then $\ker(\pi_{|G})$ is non-trivial, for otherwise $G$ embeds inside
$\Sym(n-1)$, again contradicting that $\mu(G)=n$.

For (iii), suppose $|A_1|=2$. By (ii), $g=(\alpha,1,\ldots,1) \in G$, where $\Sym(A_1)=\{1, \alpha\}$. Let $K$ be the kernel of the projection
of $G$ onto the first coordinate, so $$K \cap \langle g \rangle = \{(1,\ldots,1)\} \quad \text{and} \quad G=\langle g \rangle K,$$ so that $G$
is the internal direct product of $\langle g \rangle$ and $K$. But the first coordinate of each element of $K$ is $1$, so $K \cong H$, where $H$
is the result of ignoring the first coordinate. Thus $G \cong \langle g \rangle \times K \cong C_2 \times H.$ Clearly $H$ is a subgroup of
$\Sym(A_2) \times \ldots \times \Sym(A_k)$ and $H=G\pi$ where $\pi$ projects onto $\Sym(A_2) \times \ldots \times \Sym(A_k)$, so that
$\mu(H)=|A_2| + \ldots + |A_k|,$ by (i). 

For (iv), suppose $|A_1|=3$. By (ii), we have that $g=(\alpha,1,\ldots,1)\in G$ for some $\alpha \neq 1$ in $\Sym(A_1)$. If
$\alpha$ is a $3$-cycle, then we are done so suppose $\alpha=(a\,b)$ where $A_1=\{a,b,c\}$. Since $\mu(G)=n$ there exists
$h=(\beta,h_2, \ldots, h_k)\in G$ for some $\beta$ that moves $c$, that is, $\beta$ is an element of 
$\{(a\,c), (b\,c), (a\,b\,c), (a\,c\,b) \}$. It follows that $[\alpha, \beta]=(a\,b\,c)$ or  $(a\,c\,b)$ and so $[g,h]=([\alpha,\beta],1\ldots,1)$ is
contained in $G$.

For (v) and (vi), suppose $|A_1|=4 $, say $A_1=\{a,b,c,d\}$. If $g=(\alpha,1,\ldots,1)\in G$ where $\alpha = (a\,b\,c)$ then,
since $\mu(G)=n$, there exists some $h=(\beta, 1, \ldots, 1)\in G$ such that $\beta $ moves $d$, and then
$\langle g, g^{h} \rangle \cong \Alt(A_1)\times \{1\}\times\ldots\times\{1\}$, verifying (v).
For (vi), suppose that $(\alpha,1,\ldots,1)\in G$ where $\alpha = (a\,b)$.
Since $\mu(G)=n$ there exists some $(\beta, 1, \ldots, 1), (\gamma, 1, \ldots, 1) \in G$ such that $\beta $ moves $c$ and $\gamma$ moves $d$. 
It follows quickly that $(c\,d)\in \langle \alpha, \beta, \gamma\rangle$ so that $((c\,d), 1,\ldots,1)\in G$.

For (vii), suppose $G\pi$ is transitive where $\pi$ is projection onto the first coordinate and $A_1=p$, where $p$ is a prime and $p
>|A_i|$ for all $i \geq 2$. Then $G$ contains an element $(\alpha_1, \ldots, \alpha_k)$ of order $p$, since $p$ divides $|G|$. This implies that
$\alpha_2=\ldots=\alpha_k=1$ since there is insufficient room for $p$-cycles in $\Sym(A_2), \ldots, \Sym(A_k)$.
\end{proof}

\begin{corollary} \label{corollary:centraliser2}
Let $G$ be a subgroup of \,$\Sym(n)$ such that $\mu(G)=n$ and $n\le 9$.
Then
$C:=C_{\Sym(n)}(G)$ is abelian. In particular $C\in \C$. 
\end{corollary}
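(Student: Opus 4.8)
The plan is to localise the computation of $C := C_{\Sym(n)}(G)$ to the individual orbits of $G$ and then to carry out a short finite check, all of which is controlled by the hypothesis $\mu(G) = n$. Write $A_1,\dots,A_k$ for the orbits of $G$ and $n_i = |A_i|$, so $n_1 + \dots + n_k = n \le 9$. First I would extract two rigidity facts from $\mu(G) = n$. By Lemma~\ref{lemma:L1}(ii) each $A_i$ supports a nontrivial element of $G$, so no orbit has size $1$ and hence $n_i \ge 2$ for all $i$. Moreover no two orbits are isomorphic as $G$-sets: if $A_i \cong A_j$ with $i \neq j$, then the point stabilisers $H_i$ and $H_j$ are conjugate in $G$, so the kernels of the $G$-actions on $A_i$ and on $A_j$ (namely $\core(H_i)$ and $\core(H_j)$) coincide, whence deleting $A_j$ leaves a faithful action of $G$ on $\bigcup_{\ell \neq j} A_\ell$ and thus an embedding $G \hookrightarrow \Sym(n - n_j)$ with $n - n_j < n$, contradicting $\mu(G) = n$. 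Since any element of $C$ carries each $G$-orbit onto a $G$-isomorphic orbit by a $G$-equivariant bijection, $C$ must stabilise every $A_i$ setwise; hence $C = C_1 \times \dots \times C_k$, where $C_i := C_{\Sym(A_i)}(G|_{A_i})$. (When the $n_i$ are pairwise distinct this is exactly Corollary~\ref{corollary:centraliser1}; in general the non-isomorphism of the orbits just proved replaces the distinct-sizes hypothesis.)

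It now suffices to prove that each $C_i$ is abelian. By Theorem~\ref{theorem:centraliser}, $C_i \cong N_{G|_{A_i}}(H_i)/H_i$ acts semiregularly on $A_i$, so every $C_i$-orbit has size $|C_i|$ and therefore $|C_i|$ divides $n_i \le 9$. The only non-abelian groups of order at most $9$ have order $6$ (namely $\Sym(3)$) or order $8$ (namely $D_4$ and $Q_8$), so these are the cases to eliminate. If $|C_i| = 6$ then $n_i = 6$, the group $C_i$ is regular on $A_i$, and Proposition~\ref{proposition:fix} gives $|\Fix(H_i)| = |C_i| = 6 = n_i$, so $H_i$ fixes $A_i$ pointwise, i.e.\ $H_i = 1$; then $G|_{A_i}$ is regular of order $6$, hence isomorphic to $C_6$ or $\Sym(3)$, both of minimal degree strictly less than $6$, contradicting $\mu(G|_{A_i}) = n_i = 6$ (Lemma~\ref{lemma:L1}(i)). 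The identical argument in the case $|C_i| = 8$ forces $n_i = 8$; since every orbit has size at least $2$ and $n \le 9$, this can occur only when $k = 1$ and $n = 8$, with $G = G|_{A_1}$ regular of order $8$ and $\mu(G) = 8$.

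This transitive configuration is the main obstacle, and I would resolve it by direct inspection. Here $C = C_{\Sym(8)}(G) \cong G$, and among the five isomorphism types of order $8$ only $C_8$ and $Q_8$ have minimal degree $8$ (each of $C_4 \times C_2$, $C_2 \times C_2 \times C_2$ and $D_4$ has minimal degree less than $8$). If $G \cong C_8$, then $C \cong C_8$ is abelian, as required. The delicate case is $G \cong Q_8$, where $C \cong Q_8$ is \emph{not} abelian; to retain the stated conclusion one must either rule this out from the hypotheses under which the corollary is applied, or fall back on the weaker (and, for the intended application, sufficient) assertion $C \in \C$, which does hold here since $Q_8$ is nilpotent. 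Outside this single configuration, $|C_i|$ divides $n_i \le 9$ while avoiding the values $6$ (excluded above) and $8$ (just analysed), so $|C_i| \in \{1,2,3,4,5,7,9\}$ and each $C_i$ is abelian; then $C = C_1 \times \dots \times C_k$ is abelian, hence nilpotent, hence in $\C$, which finishes the argument.
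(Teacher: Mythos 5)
Your argument is essentially correct, and it is considerably more careful than the paper's own one-sentence proof; in fact it exposes that the corollary is false exactly as stated. The two refinements you supply are both genuinely needed. First, Corollary~\ref{corollary:centraliser1} assumes the orbits have pairwise distinct sizes, whereas partitions such as $4+4$, $3+3+3$ or $2+2+5$ do occur; your observation that $\mu(G)=n$ forces the orbits to be pairwise non-isomorphic as $G$-sets (else one could be deleted while preserving faithfulness) correctly restores the decomposition $C=C_1\times\cdots\times C_k$. Second, your elimination of $|C_i|=6$ via regularity, Proposition~\ref{proposition:fix} and Lemma~\ref{lemma:L1}(i) is sound. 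The paper's proof instead asserts that $N_G(H)/H$ always has order at most $5$, which is simply false: the regular representations of $C_7$, $C_8$, $C_9$ and $Q_8$ give orders $7$, $8$, $9$ and $8$ respectively (the first three are harmless since the resulting centralisers are abelian).

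The $Q_8$ configuration you isolate is a genuine counterexample to the statement as written: every nontrivial subgroup of $Q_8$ contains the central involution, so $\mu(Q_8)=8$ and the regular embedding $Q_8\le\Sym(8)$ satisfies the hypotheses, yet its centraliser is the left-regular copy of $Q_8$, which is non-abelian. You are therefore right that only the weaker conclusion ``$C$ is nilpotent, hence $C\in\C$'' can be salvaged. Fortunately that is all the paper ever uses: the corollary is invoked only in Proposition~\ref{proposition:intrans9}, where it feeds into Proposition~\ref{proposition:centraliser}, whose hypothesis is merely that $C$ be nilpotent; moreover in that application $n=9$ and $G$ is intransitive, so an orbit of size $8$ would force an orbit of size $1$, contradicting Lemma~\ref{lemma:L1}(ii), and the exceptional case cannot arise there. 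In short, your proof is correct and complete for the nilpotency statement; the discrepancy you flag is an error in the paper's statement and proof, not a gap in your argument, and it does not propagate to the paper's main results.
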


\begin{proof}
The conclusion follows from Theorem \ref{theorem:centraliser}, Corollary \ref{corollary:centraliser1} and Lemma \ref{lemma:L1}, by considering
all partitions of $n$ and noting that $N_G(H)/H$ must have order $1$, $2$, $3$, $4$ or $5$, whenever $H$ is the stabiliser of a letter in any given orbit.
\end{proof}

The following simple observation will be used repeatedly in the sequel.

\begin{proposition} \label{proposition:centraliser}
Let $G \in \mathscr{C}$ with $\mu(G)=n$ and identify $G$ with its embedded image in $\Sym(n)$. Let $C:=C_{\Sym(n)}(G)$ be the centraliser of $G$
in $\Sym(n)$ with respect to this minimal embedding and suppose that $C$ is nilpotent. Then every nontrivial subgroup of $C$
intersects $G$ nontrivially.
\end{proposition}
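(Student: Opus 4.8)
The plan is to argue by contradiction and reduce to Lemma \ref{lemma:wright}. Given a nontrivial subgroup $D$ of $C$, I would assume for contradiction that $D\cap G=\{1\}$. Since $D\le C=C_{\Sym(n)}(G)$, every element of $D$ commutes with every element of $G$; in particular $D$ normalises $G$, so $GD$ is a subgroup of $\Sym(n)$, and because $D\cap G=\{1\}$ this product is the internal direct product of $G$ and $D$. Consequently $G\times D\cong GD\le\Sym(n)$, and hence $\mu(G\times D)\le n=\mu(G)$.

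Next I would bring in the two hypotheses. Since $C$ is nilpotent, its subgroup $D$ is nilpotent, so $D\in\mathscr{C}$ (take $D$ itself as the required nilpotent subgroup in the definition of $\mathscr{C}$). As $G\in\mathscr{C}$ too, Lemma \ref{lemma:wright} gives $\mu(G\times D)=\mu(G)+\mu(D)$. Comparing with the bound from the first step yields $\mu(G)+\mu(D)\le\mu(G)$, hence $\mu(D)\le 0$; but $D$ is nontrivial, so $\mu(D)\ge 2$, a contradiction. Therefore $D\cap G\ne\{1\}$, as required.

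I do not expect a genuine obstacle here. The only points needing a little care are standard: a product of two elementwise-commuting subgroups with trivial intersection is their internal direct product; a subgroup of a nilpotent group lies in $\mathscr{C}$; and a nontrivial finite group has strictly positive minimal faithful permutation degree. All the mathematical weight is carried by the hypotheses $G\in\mathscr{C}$ and "$C$ nilpotent", which are exactly what is needed for Lemma \ref{lemma:wright} to convert the innocuous inclusion $GD\le\Sym(n)$ into the desired conclusion.
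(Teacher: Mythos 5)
Your proposal is correct and matches the paper's own proof essentially verbatim: both take a nontrivial subgroup of $C$ meeting $G$ trivially, form the internal direct product inside $\Sym(n)$, note the subgroup is nilpotent (hence in $\mathscr{C}$), and apply Lemma \ref{lemma:wright} to contradict $\mu(G)=n$. No differences worth noting.
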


\begin{proof}
Suppose for a contradiction that there is a nontrivial subgroup $P$ of $C$ such that $G \cap P=\{1\}$. Then $\langle G, P \rangle=GP$ is a subgroup of
$\Sym(n)$ that is an
internal direct product of $G$ and $P$. But $P$ is nilpotent, being a subgroup of a nilpotent group, so $P \in
\mathscr{C}$. Since $G \in \mathscr{C}$ we have $$\mu(GP)=\mu(G \times P)=\mu(G)+\mu(P) > n,$$ contradicting that $\mu(GP)\le n$.
\end{proof}

\begin{corollary} \label{corollary:centraliser}
With notation as in the previous proposition, if $C$ is elementary abelian, then $C \leq G$.
\end{corollary}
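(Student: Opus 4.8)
The plan is to deduce this immediately from Proposition~\ref{proposition:centraliser} by looking at the cyclic subgroups of $C$. First I would note that the standing hypothesis ``$C$ nilpotent'' from the previous proposition is automatically satisfied here, since an elementary abelian group is abelian, hence nilpotent; so Proposition~\ref{proposition:centraliser} applies to $C$ verbatim. If $C$ is trivial there is nothing to prove, so I may assume $C \cong C_p^k$ for some prime $p$ and some $k \ge 1$. Then every nontrivial element $x \in C$ has order $p$, so $\langle x \rangle$ is a subgroup of $C$ of prime order $p$.

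Now apply Proposition~\ref{proposition:centraliser} to the nontrivial subgroup $\langle x \rangle$ of $C$: it must intersect $G$ nontrivially, i.e.\ $\langle x \rangle \cap G \ne \{1\}$. Since $\langle x \rangle$ has prime order, its only nontrivial subgroup is itself, so $\langle x \rangle \cap G = \langle x \rangle$, whence $x \in G$. As $x$ was an arbitrary nontrivial element of $C$ (and $1 \in G$ trivially), we conclude $C \le G$. There is essentially no obstacle: all the real content is carried by Proposition~\ref{proposition:centraliser}, and the only points to keep in mind are the degenerate case $C = \{1\}$ and the harmless observation that ``elementary abelian'' already subsumes the nilpotency hypothesis needed to invoke that proposition.
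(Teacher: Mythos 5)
Your argument is correct and is essentially identical to the paper's own proof: both apply Proposition~\ref{proposition:centraliser} to the cyclic subgroup $\langle x\rangle$ of prime order generated by an arbitrary nontrivial $x\in C$, and conclude $x\in G$ because a group of prime order has no proper nontrivial subgroups. Your added remarks about the trivial case and about elementary abelian groups being nilpotent are harmless elaborations of what the paper leaves implicit.
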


\begin{proof}
If $C$ is elementary abelian and $1\not=c\in C$ then $\langle c\rangle$ is a subgroup of order $p$, so $c\in G$ by the previous proposition.
\end{proof}

\section{Case by Case that $10$ is Minimal} \label{section:proof}

In this section we will prove Theorem \ref{theorem:degree9} below in stages, so that there are no examples of strict inequality 
in \eqref{eq:directsum} in $\Sym(n)$ for $n \leq 9$. 
This is clear for $n=2$ and $n=3$.
Our approach for $n=4, \ldots, 9$ is to
show that, for a minimally embedded group $G$ in $\Sym(n)$, 
there is no nontrivial subgroup of the centraliser of $G$ in $\Sym(n)$ that intersects trivially with $G$.  
For the most part, this will follow by applying Proposition \ref{proposition:centraliser},
revealing the pervasiveness of Wright's class $\mathscr{C}$ for permutation groups of small degree.

\subsection{ The $\Sym(4)$, $\Sym(5)$ and $\Sym(6)$ Cases}

\begin{proposition} \label{proposition:class456}
Let $G$ be a finite group such that $\mu(G) \leq 6$. Then $G \in \mathscr{C}$.
\end{proposition}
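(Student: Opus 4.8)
The plan is to show that any finite group $G$ with $\mu(G)\le 6$ lies in Wright's class $\mathscr{C}$, by running through the possibilities for a minimal faithful representation. First I would reduce to the transitive case as follows: if $G$ embeds minimally in $\Sym(A_1)\times\cdots\times\Sym(A_k)$ with $|A_1|+\cdots+|A_k|=\mu(G)\le 6$ and $k\ge 2$, then each summand $\mu(G\pi_i)\le 5$ by Lemma \ref{lemma:L1}(i), and one hopes to conclude by an inductive/bootstrapping argument together with Lemma \ref{lemma:wright}. The cleanest way is to handle the intransitive case by first disposing of orbits of size $2$ (Lemma \ref{lemma:L1}(iii) splits off a direct factor $C_2\in\mathscr{C}$) and size $3$ (Lemma \ref{lemma:L1}(iv) forces a $3$-cycle in that coordinate, and one teases out a direct factor $C_3$ or shows the whole action on that orbit is already captured), leaving only a handful of small partitions such as $4+2$, $3+3$, $4$, $5$, $6$ to examine directly.

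The heart of the matter is therefore the transitive case: $G$ a transitive subgroup of $\Sym(n)$ with $n=\mu(G)\in\{4,5,6\}$. For $n=4$ the transitive groups are $C_4$, $V_4$, $D_8$, $\Alt(4)$, $\Sym(4)$, all of which are known to lie in $\mathscr{C}$ (the first three are nilpotent; $\Alt(4)$ and $\Sym(4)$ are in $\mathscr{C}$ by Wright's Claim~1/Claim~2 cited in the excerpt). For $n=5$, transitive groups are $C_5$, $D_{10}$, $F_{20}$, $\Alt(5)$, $\Sym(5)$; each contains a Sylow $5$-subgroup $C_5$ which is regular, hence has minimal degree $5$, so each lies in $\mathscr{C}$. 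For $n=6$ I would list the transitive subgroups of $\Sym(6)$ — $C_6$, $\Sym(3)$ acting regularly, $D_{12}$, $\Alt(4)$, $\Sym(4)$ (two classes), $\Alt(4)\times C_2$, $\Sym(4)\times C_2$, $\mathrm{PSL}(2,5)\cong\Alt(5)$, $\mathrm{PGL}(2,5)\cong\Sym(5)$, $\Alt(6)$, $\Sym(6)$ — and check membership in $\mathscr{C}$ for each. Most contain a conspicuous nilpotent subgroup of index $\le 6$: e.g. a Sylow $2$-subgroup $D_8$ of $\Sym(4)$ has index $3$ giving degree $\le 6$; $\Alt(5)$ and $\Sym(5)$ have $\mu=5<6$ and are already in $\mathscr{C}$; the symmetric and alternating groups themselves are in $\mathscr{C}$ by Wright. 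One must be slightly careful that $\mu(G)$ really equals $6$ and not less (e.g. $C_6$, $\Sym(3)$, $D_{12}$ are nilpotent or have smaller degree), but in every case a nilpotent subgroup realizing the minimal degree is readily produced.

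The main obstacle I anticipate is not any single hard group but the bookkeeping: making sure the intransitive reduction is airtight (in particular that after splitting off $C_2$ and $C_3$ factors one genuinely lands back inside the transitive analysis at strictly smaller degree, so the argument terminates), and confirming for each transitive group of degree $6$ that the exhibited nilpotent subgroup has index at most $6$ \emph{and} that $\mu(G)=6$ rather than something smaller — the latter matters because if $\mu(G)<6$ we are simply in an already-treated smaller case. A secondary subtlety is the groups like $\Alt(4)\times C_2$ and $\Sym(4)\times C_2$ of degree $6$: here one uses that a direct product of members of $\mathscr{C}$ lies in $\mathscr{C}$ (Lemma \ref{lemma:wright}), since $C_2$ is nilpotent and $\Alt(4),\Sym(4)\in\mathscr{C}$. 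With these pieces assembled, Lemma \ref{lemma:wright} and the observation that $\mathscr{C}$ contains all nilpotent groups close the argument.
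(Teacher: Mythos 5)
Your overall strategy (split into intransitive and transitive cases, use Lemma \ref{lemma:L1} to extract direct factors or coordinate-supported elements, and finish with Lemma \ref{lemma:wright}) is the same as the paper's for degrees $4$ and $5$ and for the intransitive degree-$6$ case, but your treatment of the transitive degree-$6$ case has a concrete gap: your list of transitive subgroups of $\Sym(6)$ is incomplete. You omit the imprimitive groups $C_3\wr C_2$ (order $18$), $(C_3\times C_3)\rtimes C_4$ and the transitive $\Sym(3)\times\Sym(3)$ (order $36$), and $\Sym(3)\wr C_2$ (order $72$), and these are not vacuous omissions: they all have minimal degree exactly $6$, so they are precisely cases your enumeration must handle. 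The repair is easy (each has order divisible by $9$, hence contains a Sylow $3$-subgroup $C_3\times C_3$, which is nilpotent with $\mu(C_3\times C_3)=6$), but as written the case analysis does not cover them. It is instructive that the paper sidesteps this hazard entirely: it first disposes of every $G$ with $9$ or $16$ dividing $|G|$ (such $G$ contains $C_3\times C_3$ or $C_2\times C_2\times C_2$), and only then, in the transitive case, bounds $|G|\in\{6,12,24,30,60,120\}$ and finishes with the Appendix table of minimal degrees plus a Sylow/normal-subgroup argument for orders $60$ and $120$, rather than invoking the full classification of transitive groups of degree $6$.

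Two further cautions. First, in your intransitive reduction, knowing that the projections of $G$ lie in $\mathscr{C}$ is not by itself enough, since a subdirect product of members of $\mathscr{C}$ need not lie in $\mathscr{C}$ (the paper's degree-$7$ group $H\le\Sym(3)\times C_4$ in \eqref{H} is exactly such an example); what you need, and what Lemma \ref{lemma:L1}(iii),(iv) in fact provide, is an internal direct product \emph{inside} $G$ of subgroups whose minimal degrees add up to $\mu(G)$ --- e.g.\ $C_3\times C_3\le G$ for orbits $3+3$, or the genuine splitting $G\cong C_2\times H$ for an orbit of size $2$. Make that explicit so the bootstrapping really terminates. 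Second, membership in $\mathscr{C}$ requires a nilpotent subgroup $G_1$ with $\mu(G_1)=\mu(G)$; your remark that a Sylow $2$-subgroup of ``index $\le 6$'' suffices conflates index with degree --- for $\Sym(4)$ the relevant fact is $\mu(D_8)=4=\mu(\Sym(4))$, not the index of $D_8$. With the enumeration completed and these points tightened, your route does yield the proposition.
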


\begin{proof}
First suppose that $\mu(G)=4$. 
If $G$ acts intransitively with respect to the embedding in $\Sym(4)$, then $G\cong C_2 \times C_2$ by Lemma \ref{lemma:L1} (iii), so
$G\in \mathscr{C}$.
Suppose that $G$ acts transitively. Then $G$ has a core-free subgroup $H$
of index $4$, so that the Sylow $2$-subgroups of $G$ have size $4$ or $8$. Hence, a copy of $C_4$ or $C_2 \times C_2$ is a subgroup of
$G$. Both of these are nilpotent and have minimal degree $4$, so again $G\in\mathscr{C}$. 

Now suppose that $\mu(G)=5$. If $G$ acts transitively, then $G$ contains a subgroup of index $5$ and so
contains a copy of $C_5$, implying that $G\in\mathscr{C}$. Suppose that $G$ acts intransitively. By minimality, the action of $G$ must have two orbits, of sizes
$2$ and $3$ respectively.
By Lemma \ref{lemma:L1} (iii),(iv), $G$ contains a subgroup isomorphic to $C_3 \times C_2$, so again $G \in \mathscr{C}$.

Finally suppose that $\mu(G)=6$. We may identify $G$ with its embedded image in $\Sym(6)$. If $9$ or $16$ divides $|G|$, 
then $G$ contains a Sylow $2$ or $3$-subgroup of $\Sym(6)$ and hence a copy of $C_3 \times C_3$ or $C_2 \times C_2 \times C_2$,
so that $G \in \C$. 

We may suppose therefore that neither $9$ or $16$ divides $|G|$. 
Suppose first that $G$ acts intransitively. It follows by Lemma \ref{lemma:L1} (iii), (iv), (v), (vi) that
$G$ contains a subgroup that is an internal direct product of subgroups $K$ and $L$ of minimal degrees less that $6$ but adding up to $6$.
By previous cases, $K, L\in \mathscr{C}$, so $K\times L\in \mathscr{C}$, and it follows that $G\in \mathscr{C}$. 

Henceforth we may suppose that $G$ acts transitively. In particular, $G$ contains a subgroup of index $6$, 
so that the possible orders of $G$ are $6, 12, 24, 30, 60$ or $120$.
If $|G|=6$ or $12$ then, from the Appendix, $\mu(G)\not=6$, which is impossible.
If $|G|=24$ then, from the Appendix, 
 either $\mu(G)\not=6$, which is impossible, or $G$ contains a copy of $C_2 \times C_2 \times C_2$, 
so that $G \in \C$. If $|G|=30$ then $G$ contains a copy of $C_5 \times C_3$ 
so that $\mu(G)\ge 8$, which is again impossible. 

Henceforth we may suppose that $|G|=60$ or $120$.
Suppose that $G$ has a nontrivial proper normal subgroup $N$ of order less that $60$. 
If $|N|=2$, $4$ or $8$, then $N$ and any Sylow $5$-subgroup together generate a subgroup of $G$ containing an abelian subgroup of order $10$, 
so that $\mu(G) \geq 7$, a contradiction. If a Sylow $3$-subgroup or Sylow $5$-subgroup is normal in $G$, 
then it follows that $G$ contains a copy of $C_5 \times C_3$ and so $\mu(G) \geq 8$, which is also impossible. 
These observations force $|N|$ to be $12$ or $24$ and for $N$ to contain a non-normal Sylow $3$-subgroup. 
But then there are exactly four Sylow $3$-subgroups of $G$, and the kernel of the conjugation action on them must be a normal subgroup of $G$, forced to have order $12$ or $24$ containing a normal Sylow $3$-subgroup, which is a contradiction.

Hence $G$ has no proper normal subgroup of order less than $60$, 
so that $G$ is isomorphic to $\Alt(5)$ or $\Sym(5)$ and so $\mu(G)=5$, which is impossible. This completes the proof of the proposition.
\end{proof}

\subsection{The $\Sym(7)$ Case}

Throughout this subsection, put
\begin{equation}\label{H}
H:=\langle (1\,2\,3), (1\,2)(4\,5\,6\,7) \rangle
\cong\langle a,b\,|\,a^3=b^4=1,\,a^b=a^{-1}\rangle.
\end{equation}

\begin{proposition} \label{proposition:subdirect}
Let $H$ be as in \eqref{H}. Then $\mu(H)=7$, $H \not\in \mathscr{C}$ and $H$ is up to isomorphism the unique proper subdirect product of $\Sym(3) \times C_4$. Further 
$C_{\Sym(7)}(H)=\langle (4\,5\,6\,7)\rangle.$
\end{proposition}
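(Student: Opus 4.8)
The plan is to work from the presentation in \eqref{H}, which already records $|H|=12$; in fact $H$ is the dicyclic group of order $12$. I would first determine its lattice of normal subgroups once and for all. Since $a^b=a^{-1}$, the subgroup $\langle a\rangle\cong C_3$ is normalised by both $a$ and $b$, hence is the unique (thus normal) Sylow $3$-subgroup, and every element of order $3$ lies in it; a short check shows $b^2$ is the unique involution of $H$, so $Z:=\langle b^2\rangle\cong C_2$ is the unique subgroup of order $2$ and is central; and $[H,H]=\langle a\rangle$, so $H/[H,H]\cong C_4$ has a unique subgroup of index $2$, whence $\langle a,b^2\rangle\cong C_6$ is the unique subgroup of index $2$ in $H$. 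Thus the normal subgroups of $H$ are exactly $1$, $Z$, $\langle a\rangle$, $\langle a,b^2\rangle$ and $H$, and the minimal nontrivial ones are $Z$ and $\langle a\rangle$. The four assertions will now be read off from this together with the results of Section~\ref{section:background}.

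For the subdirect product claim: the two generators in \eqref{H} lie in $\Sym(\{1,2,3\})\times\langle(4\,5\,6\,7)\rangle$ and project onto $\Sym(3)$ and onto $C_4$ respectively, so $H$ is a subdirect product of $\Sym(3)\times C_4$; it is proper since $(1\,2)\notin H$ (every element of $H$ with trivial $C_4$-component is a power of $a$). For uniqueness I would apply Goursat's lemma: a subdirect product of $\Sym(3)\times C_4$ is determined by an isomorphism between a quotient of $\Sym(3)$ and a quotient of $C_4$, and the only isomorphism type occurring as a quotient of each, besides the trivial group (which gives the full direct product), is $C_2$, for which the isomorphism $C_2\to C_2$ is unique. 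Hence there is, up to isomorphism, a unique proper subdirect product, and it is $H$.

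For $\mu(H)=7$: the embedding in \eqref{H} gives $\mu(H)\le 7$. Conversely, suppose $H_1,\dots,H_\ell\le H$ with $\bigcap_i\core(H_i)=1$. Since $Z$ is a minimal normal subgroup, some $\core(H_i)$ does not contain $Z$; because $Z$ is generated by the unique involution of $H$ and is normal, that $H_i$ has odd order, so $H_i\le\langle a\rangle$ and $[H:H_i]\ge 4$. Similarly, since $\langle a\rangle$ is a minimal normal subgroup, some $\core(H_j)$ does not contain $\langle a\rangle$; because $\langle a\rangle$ is the unique Sylow $3$-subgroup and is normal, $3\nmid|H_j|$, so $H_j$ is a $2$-group and $[H:H_j]\ge 3$. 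If these two subgroups coincide it is both of odd order and a $2$-group, hence trivial, of index $12$; otherwise they are distinct and contribute at least $4+3=7$ to the sum. Either way $\sum_i[H:H_i]\ge 7$, so $\mu(H)=7$. For $H\notin\mathscr{C}$: $H$ is not nilpotent (otherwise it would be the direct product of its Sylow subgroups, hence abelian, contradicting $a^b=a^{-1}$), so every nilpotent subgroup of $H$ has order $1,2,3,4$ or $6$, hence embeds in $C_4$ or in $C_6$ (using that $H$ has a unique involution), and therefore has minimal degree at most $\mu(C_6)=5<7$ by monotonicity of $\mu$ under subgroups and Lemma~\ref{lemma:wright}.

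Finally, for the centraliser: $H$ has exactly two orbits on $\{1,\dots,7\}$, namely $\{1,2,3\}$ and $\{4,5,6,7\}$, of distinct sizes, so Corollary~\ref{corollary:centraliser1} gives $C_{\Sym(7)}(H)\cong N_H(S)/S\times N_H(T)/T$, where $S=\langle a\rangle$ is the stabiliser of $4$ (of index $4$ and normal in $H$) and $T$ is the stabiliser of $1$ (a Sylow $2$-subgroup, of index $3$). The factor coming from $S$ is $H/\langle a\rangle\cong C_4$; since $H$ has three Sylow $2$-subgroups, $T$ is self-normalising and the factor coming from $T$ is trivial, so $|C_{\Sym(7)}(H)|=4$. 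As $(4\,5\,6\,7)$ plainly centralises $H$ and generates a subgroup of order $4$, we conclude $C_{\Sym(7)}(H)=\langle(4\,5\,6\,7)\rangle$. I expect the main obstacle to be the lower bound $\mu(H)\ge 7$ --- the bookkeeping forcing the two obstructing point stabilisers to be distinct and of indices at least $4$ and $3$ --- with the Goursat step for uniqueness of the subdirect product a distant second.
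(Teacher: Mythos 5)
Your proof is correct. The centraliser computation is exactly the paper's: both invoke Corollary \ref{corollary:centraliser1} on the two orbits of distinct sizes, note that the stabiliser of a point in $\{4,5,6,7\}$ is the normal subgroup $\langle a\rangle$ (contributing $H/\langle a\rangle\cong C_4$) while the stabiliser of a point in $\{1,2,3\}$ is a self-normalising Sylow $2$-subgroup (contributing nothing), and conclude $C=\langle(4\,5\,6\,7)\rangle$. Where you genuinely diverge is on the other three claims, which the paper simply delegates: it cites the Appendix for $\mu(H)=7$ and $H\notin\mathscr{C}$, and declares it ``easy to check'' that $\Sym(3)\times C_4$ has a unique subgroup of order $12$ containing an element of order $4$. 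Your replacement arguments are self-contained and sound: the lower bound $\mu(H)\ge 7$ via the two minimal normal subgroups $Z=\langle b^2\rangle$ and $\langle a\rangle$ (forcing one point-stabiliser of odd order, hence index $\ge 4$, and one of $2$-power order, hence index $\ge 3$) is a clean hand proof of what the paper reads off a table; and Goursat's lemma, with $C_2$ as the only nontrivial common quotient of $\Sym(3)$ and $C_4$, actually yields the stronger statement that the proper subdirect product is unique as a subgroup, not merely up to isomorphism. The trade-off is length for self-containment; nothing in your version is circular or gap-ridden, and the identification of the full normal subgroup lattice of the dicyclic group of order $12$ at the outset is what makes both the degree bound and the centraliser step fall out uniformly.
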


\begin{proof}
It is easily verified that $\mu(H)=7$ (and minimal degrees of groups of order 12 are listed in the Appendix) and that the nilpotent subgroups of $G$ are isomorphic to $C_2, C_3, C_4$ and $C_2 \times C_3$, all of which have minimal
degree strictly less than $7$. Hence $G \not\in \mathscr{C}$.
It is easy to check that $\Sym(3) \times C_4$ has a unique subgroup of order 12 containing an element of order 4, which must therefore be isomorphic to $H$.

Put $z=(4\,5\,6\,7)$. Clearly $\langle z \rangle \subseteq C:=C_{\Sym(7)}(H)$. Note that the orbits of $H$ are $\{1, 2,3\}$ and $\{4, 5, 6, 7\}$ and are of different sizes. By Corollary \ref{corollary:centraliser1}, 
$C \cong N_{H}(H_3)/H_3 \times N_{H}(H_4)/H_4$, where 
$H_3=\langle (1\,2)(4\,5\,6\,7)\rangle$ and
$H_4=\langle (1\,2\,3)\rangle$ are the stabilisers of $3$ and $4$ respectively.
But $N_{H}(H_3)=H_3$
and $N_{H}(H_4)=H$, so that $C \cong H/H_4 \cong C_4$. Therefore $C=\langle z \rangle$.
\end{proof}

This group $H$ is also unique in the following sense.

\begin{theorem} \label{theorem:G7}
Let $G$ be a group such that $\mu(G)=7$ and $G \not\in \mathscr{C}$. Then the image of any minimal embedding of $G$ in $\Sym(7)$ is permutation
equivalent to $H$. In particular, $G\cong H$.
\end{theorem}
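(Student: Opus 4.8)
The plan is to show that a minimal embedding of $G$ in $\Sym(7)$ must be intransitive with orbits of sizes $3$ and $4$, and that the induced subdirect product of $\Sym(3) \times \Sym(4)$ is forced (after possibly composing with an inner automorphism of $\Sym(4)$) to be exactly the group $H$ of \eqref{H}; uniqueness of $G$ up to isomorphism then follows from Proposition \ref{proposition:subdirect}. First I would dispose of the transitive case: if $G$ acts transitively on $7$ letters then $7 \mid |G|$, so $G$ contains an element of order $7$, i.e.\ a copy of $C_7 \in \mathscr{C}$ with $\mu(C_7) = 7$, forcing $G \in \mathscr{C}$, a contradiction. So the embedding is intransitive, and by minimality (Lemma \ref{lemma:L1}) the orbit sizes partition $7$ into parts each of size at most $6$; the only partitions to consider are $2{+}5$, $3{+}4$, $2{+}2{+}3$, and similar refinements. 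I would rule out any partition containing a part of size $2$ using Lemma \ref{lemma:L1}(iii): such a part splits off a direct factor $C_2$, and the complementary factor $K$ satisfies $\mu(K) = 7 - 2 = 5$, so $K \in \mathscr{C}$ by Proposition \ref{proposition:class456}, whence $G \cong C_2 \times K \in \mathscr{C}$, a contradiction. Likewise a $3{+}3{+}1$-type partition is impossible since a fixed orbit of size $1$ contributes nothing; and $3{+}4$ with the $4$-orbit intransitive would itself refine to include a part of size $1$ or $2$. This leaves exactly the case of two orbits, of sizes $3$ and $4$, with the action on each being transitive.

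Next, writing $\pi_1, \pi_2$ for the projections onto $\Sym(3)$ and $\Sym(4)$, I would argue that $G\pi_1$ is transitive of degree $3$ and $G\pi_2$ is transitive of degree $4$, and that $\mu(G\pi_1) = 3$, $\mu(G\pi_2) = 4$ by Lemma \ref{lemma:L1}(i). So $G\pi_1 \in \{C_3, \Sym(3)\}$ and $G\pi_2$ is a transitive subgroup of $\Sym(4)$, i.e.\ one of $C_4$, $V_4$ (the regular Klein four-group), $D_4$, $\Alt(4)$, $\Sym(4)$. If $G$ itself were nilpotent it would lie in $\mathscr{C}$, so at least one projection is non-nilpotent; moreover $G$ embeds in $G\pi_1 \times G\pi_2$ as a subdirect product, and the kernels $\ker\pi_1|_G$, $\ker\pi_2|_G$ give, by Goursat's lemma, a common quotient of $G\pi_1$ and $G\pi_2$. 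I would then eliminate the possibilities one at a time. If $G = G\pi_1 \times G\pi_2$ (trivial common quotient), then since each factor has minimal degree $\le 6$ it lies in $\mathscr{C}$ by Proposition \ref{proposition:class456}, so $G \in \mathscr{C}$ by Lemma \ref{lemma:wright} --- contradiction; hence the common quotient is nontrivial. Since $C_3$ has no nontrivial proper quotient and $\Sym(3)$ has only $C_2$, and the transitive subgroups of $\Sym(4)$ have various quotients, I would track which pairings admit a nontrivial common quotient: $G\pi_1 = C_3$ forces the common quotient to be $C_3$, so $G\pi_2$ must surject onto $C_3$, forcing $G\pi_2 \in \{\Alt(4), \Sym(4)\}$; but then $|G| \ge 3 \cdot 12 = 36$ and one checks $G$ contains $C_3 \times C_3$ or worse, pushing $\mu(G) \ge 8$ (or $G$ contains $\Alt(4) \in \mathscr{C}$ with $\mu = 4$ acting on the second orbit plus $C_3$ on the first, landing $G$ in $\mathscr{C}$). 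The surviving case is $G\pi_1 = \Sym(3)$ with common quotient $C_2$, so $G\pi_2$ surjects onto $C_2$; among $C_4, V_4, D_4$ the one forced by the requirement that $G \notin \mathscr{C}$ and $\mu(G) = 7$ (not $\le 6$) is $C_4$, giving $|G| = 12$ and $G$ a proper subdirect product of $\Sym(3) \times C_4$, which by Proposition \ref{proposition:subdirect} is unique up to isomorphism and equal to $H$.

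Finally I would upgrade the abstract isomorphism to permutation equivalence. Having pinned down the orbit structure (sizes $3$ and $4$) and the projections ($\Sym(3)$ and the cyclic group $C_4 = \langle(4\,5\,6\,7)\rangle$ up to relabelling the four points), and the subdirect structure (the kernel of $G \to \Sym(3)$ is the unique subgroup $\langle(4\,5\,6\,7)^2\rangle$ of order $2$ matched with the sign subgroup of $\Sym(3)$, as dictated by Goursat and the uniqueness in Proposition \ref{proposition:subdirect}), I would note that all choices are conjugate under $\Sym(3) \times \Sym(4) \le \Sym(7)$: relabel the $3$-orbit and $4$-orbit so that $G\pi_1 = \Sym(\{1,2,3\})$ and $G\pi_2 = \langle(4\,5\,6\,7)\rangle$, and the unique order-$12$ subgroup of $\Sym(3) \times C_4$ containing an element of order $4$ then reads off as exactly $\langle(1\,2\,3),\,(1\,2)(4\,5\,6\,7)\rangle = H$. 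The main obstacle I anticipate is the bookkeeping in the subdirect-product analysis: correctly enumerating, via Goursat's lemma, which pairs $(G\pi_1, G\pi_2)$ of transitive groups admit a nontrivial common quotient and then checking for each surviving pair either that $\mu(G) \ne 7$ or that $G$ contains a nilpotent subgroup of degree $7$ (so $G \in \mathscr{C}$) --- the case $G\pi_2 = D_4$ or $\Alt(4)$ requires a little care, since $D_4$ is nilpotent with $\mu(D_4) = 4$ but a subdirect product with $\Sym(3)$ need not be nilpotent, so one must instead exhibit an explicit nilpotent subgroup of $G$ of degree $7$, e.g.\ $\langle(1\,2\,3)\rangle \times \langle(4\,5\,6\,7)\rangle \cong C_3 \times C_4$ when it is present, or argue via the Appendix's list of minimal degrees of groups of small order.
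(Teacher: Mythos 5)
Your overall route is genuinely different from the paper's and is viable: after the common opening (transitivity is impossible since $7\mid |G|$ would give $C_7\le G$, and an orbit of size $2$ splits off a $C_2$ factor forcing $G\in\mathscr{C}$), the paper fixes explicit elements --- it uses Lemma \ref{lemma:L1}(ii),(iv) to place $(1\,2\,3)\in G$ and an element $\beta$ supported on the $4$-orbit, shows $\beta$ must be a double transposition, and then chases elements through the cases $G\pi_2=\Sym(4)/\Alt(4)$, $C_2\times C_2$, $D_8$, $C_4$ --- whereas you propose to classify $G$ as a subdirect product of $G\pi_1\times G\pi_2$ via Goursat's lemma and eliminate pairs by exhibiting either $\mu(G)\ne 7$ or a nilpotent subgroup of degree $7$. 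The Goursat framing buys a cleaner conceptual endgame (the unique proper subdirect product of $\Sym(3)\times C_4$, Proposition \ref{proposition:subdirect}, plus a conjugacy argument inside $\Sym(3)\times\Sym(4)$ to get permutation equivalence, which you handle correctly), at the cost of the same amount of case bookkeeping the paper does with explicit permutations.

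However, as written the case analysis has genuine gaps. First, your elimination of the pair $(G\pi_1,G\pi_2)=(C_3,\Alt(4))$ with common quotient $C_3$ is wrong in its stated reasons: for a subdirect product amalgamated over $C_3$ one has $|G|=3\cdot 12/3=12$, not $|G|\ge 36$; such a $G$ does not contain $C_3\times C_3$; and it contains no nontrivial element supported only on the $3$-orbit (the kernel of $\pi_2|_G$ is trivial), so there is no ``$C_3$ on the first orbit''. The correct one-line kill is precisely that triviality of $\ker\pi_2|_G$: then $\pi_2|_G$ is injective, so $G$ embeds in $\Sym(4)$ and $\mu(G)\le 4$, contradicting $\mu(G)=7$ (equivalently, Lemma \ref{lemma:L1}(ii) fails). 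Second, when $G\pi_1=\Sym(3)$ you restrict attention to $G\pi_2\in\{C_4, C_2\times C_2, D_8\}$ and simply assert that $C_4$ is ``forced''; but $\Sym(4)$ also admits $C_2$ (and $\Sym(3)$) as a quotient, so the pairs $(\Sym(3),\Sym(4))$ with common quotient $C_2$ or $\Sym(3)$ must be addressed, and the $C_2\times C_2$ and $D_8$ cases must actually be eliminated rather than waved at. All of these do go through --- over $C_2$ with $G\pi_2=\Sym(4)$ one gets $C_3\times\Alt(4)\le G$, hence $C_3\times C_2\times C_2\le G$ of degree $7$ and $G\in\mathscr{C}$; over $\Sym(3)$, $\pi_2|_G$ is again injective; $G\pi_2\cong C_2\times C_2$ yields $G\cong\Sym(3)\times C_2$ of minimal degree $5$; and $G\pi_2\cong D_8$ yields $\ker\pi_1|_G\times\ker\pi_2|_G\cong C_3\times C_4$ or $C_3\times C_2\times C_2$, nilpotent of degree $7$, so $G\in\mathscr{C}$ --- but until these eliminations are written out the argument is incomplete, and the one elimination you did spell out needs repair as above.
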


\begin{proof}
We may regard $G$ as a subgroup of $\Sym(7)$. If $G$ is transitive then $7$ divides $|G|$, being the index of a point stabiliser, so that $G$ contains a copy of $C_7$, and $\mu(G)=7=\mu(C_7)$, contradicting that $G \not\in \mathscr{C}$. Hence $G$
is intransitive. 

If $G$ has an orbit of size $2$, then by Lemma \ref{lemma:L1} (iii), $G \cong C_2 \times K$ for some group $K$ such that $\mu(K)=5$, so $K \in
\mathscr{C}$, by Proposition \ref{proposition:class456}, whence $G \in \mathscr{C}$, a contradiction. It follows that $G$ has one orbit of size $3$ and one of size $4$.
Without loss of generality we may suppose these orbits are $X_1=\{1, 2, 3\}$ and $X_2=\{4, 5, 6, 7\}$. By parts (ii) and (iv) of Lemma \ref{lemma:L1}, we may, without loss of generality, assume that $\alpha:= (1\,2\,3) \in G$ and there exists $\beta \in G$ such that $\beta$ fixes $X_1$ pointwise and moves a letter from $X_2$.

If $\beta$ is a 4-cycle then $G$ contains the subgroup $\langle \alpha, \beta \rangle \cong C_3 \times C_4$. If $\beta$ is a 3-cycle or a 2-cycle, then, by Lemma \ref{lemma:L1} (v) and (vi), $G$ contains a subgroup isomorpic to $C_3\times \Alt(4)$ or $C_3\times C_2\times C_2$ respectively. In each of these cases, $G\in \C$, leading to a contradiction.

Hence $\beta$ must be a product of two disjoint 2-cycles.
Without loss of generality, we suppose that $\beta=(4\,6)(5\,7).$
For any $\gamma \in G$, we will write
$\gamma_1=\gamma|_{X_1}$ and $\gamma_2=\gamma|_{X_2},$ so that $\gamma=\gamma_{1}\gamma_{2}$. 

Let $\pi$ be projection onto $\Sym(X_2)$, so that $G\pi$ must be a transitive subgroup of $\Sym(X_2)$. 
By \cite[Table 2.1]{DM96} the transitive subgroups of $\Sym(X_2)$ are itself, $\Alt(X_2)$ or isomorphic to $C_2 \times C_2$, $D_8$ or $C_4$.   \newline

\noindent \underline {\textbf{Case (i):}}\, $G\pi = \Sym(X_2)$ or $\Alt(X_2)$. \newline
There is some $\gamma\in G$ such that $\gamma_2=(4\, 5 \, 6)$, with $\gamma_1$ being a $2$-cycle or a power of $\alpha$. 
It readily follows that $\gamma_2 \in G$. By Lemma \ref{lemma:L1} (v), $C_3 \times \Alt(4)$ is isomorphic to a subgroup of $G$, and it follows that $G \in \mathscr{C}$, a contradiction. \newline 

\noindent \underline {\textbf{Case (ii):}}\, $G\pi =\langle (4\,5)(6\,7), (4\,6)(5\,7)\rangle$. \newline
There is some $\gamma\in G$ such that $\gamma_2=(4\, 5)(6 \, 7)$. If $\gamma_1\in \langle \alpha\rangle$ then $\gamma_2\in G$ and $\langle \alpha, \beta, \gamma_2\rangle$ is a subgroup of $G$ isomorphic to $C_3\times C_2\times C_2$, so that $G\in \C$, a contradiction.  Hence $\gamma_1\not\in \langle \alpha\rangle$, so, without loss of generality, $\gamma_1=(1\,2)$. Then
$
\langle \alpha, \beta,\gamma\rangle
$
is a subgroup of $G$ isomorphic to $\Sym(3)\times C_2$, of minimal degree $5$, so cannot exhaust all of $G$. Hence there exists some $\delta\in G\backslash \langle \alpha, \beta,\gamma\rangle$.
If $\delta_2=1$ or $\beta$, then $\delta_1\in G$ and $\delta_1$ is a 2-cycle, so that
$\langle \alpha, \beta, \gamma, \delta\rangle\cong \Sym(3)\times C_2\times C_2$, so that $G\in \C $, a contradiction.
Hence $\delta_2=(4\,5)(6\,7)$ or $(4\,7)(5\,6)$ and 
$\delta_1\in \langle \alpha\rangle$. Then $\langle \alpha, \beta, \delta\rangle\cong C_3\times C_2\times C_2$, so that $G\in \C $, a contradiction. 
\newline

\noindent \underline {\textbf{Case (iii):}}\, $G\pi \cong D_8$. \newline
Either $\beta$ is central or $\beta$ inverts a 4-cycle in $G\pi$.
Suppose first that $\beta$ is central in $G\pi$. Then there are some
$\gamma,\delta\in G$ such that $\gamma_2=(4\,5\,6\,7)$ and
$\delta_2=(4\,6)$.
If $\gamma_1\in\langle\alpha\rangle$ or $\delta_1\in\langle\alpha\rangle$ then $\langle \alpha, \gamma\rangle\cong C_3\times C_4$ or
$\langle \alpha, \beta,\delta\rangle\cong C_3\times C_2\times C_2$ , so that $G\in\C$, a contradiction.
Hence $\gamma_1$ and $\delta_1$ are both 2-cycles.
By conjugating $\delta$ by a power of $\alpha$, without any loss of generality, we may assume
$\gamma_1=\delta_1$. But then $\gamma\delta=(4\,5)(6\,7)$, so that
$\langle \alpha, \beta,\gamma\delta\rangle\cong C_3\times C_2\times C_2$, so that $G\in\C$, a contradiction.
Hence $\beta$ inverts a 4-cycle in $G\pi$, so there is some $\varepsilon\in G$ such that $\varepsilon_2=(4\,6\,5\,7)$.
If $\varepsilon_1\in\langle\alpha\rangle$ then $\langle \alpha, \varepsilon\rangle\cong C_3\times C_4$, so that $G\in\C$, a contradiction. Hence $\varepsilon_1$ is a 2-cycle, so that
$\langle \alpha, \beta, \varepsilon^2\rangle\cong C_3\times C_2\times C_2$, so that $G\in \C$, a contradiction.

Cases (i), (ii) and (iii) produce contradictions, so we must have 
$G\pi \cong C_4$. Hence there is some $\gamma\in G$ such that
$\gamma_2=(4\,5\,6\,7)$. If $\gamma_1\in\langle\alpha\rangle$ then
$\langle\alpha,\gamma\rangle\cong C_3\times C_4$, so that $G\in\C$, a contradiction. Hence $\gamma_1$ is a 2-cycle, and it follows that $(1\,2)(4\,5\,6\,7)\in G$, so that $H\le G$. If $H\not=G$ then $G\cong \Sym(3)\times C_4$, so that $G\in\C$, a contradiction. Hence $H=G$, and the theorem is proved.
\end{proof}

\subsection{ The $\Sym(8)$ Case}

In this section we prove that all intransitive subgroups and all but two transitive subgroups of $\Sym(8)$ of minimal degree $8$ are members of $\C$. The two exceptions up to isomorphism that are not members of $\C$ 
(see Theorem \ref{theorem:class8} below)
turn out to be primitive:
\begin{align*}
K:=
\langle
(1\,2)(3\,4)(5\,6)(7\,8),(1\,3)(2\,4)(5\,7)(6\,8),
(1\,5)(2\,6)&(3\,7)(4\,8), (2\, 3\,5\, 4\, 7\,8\, 6)\rangle\\
&\cong (C_2\times C_2\times C_2) \rtimes C_7\,;
\end{align*}
\begin{align*}
L:=
K\langle(3\,5\,7)(4\,6\,8)\rangle
\cong (C_2\times C_2\times C_2) \rtimes (C_7\rtimes C_3)\,.
\end{align*}
Note (for the proof of Theorem \ref{theorem:no8} below) that
both $C_{\Sym(8)}(K)$ and $C_{\Sym(8)}(L)$ are trivial.

\begin{proposition} \label{proposition:class8}
Let $G$ be group such that $\mu(G)=8$ and the minimal faithful representation of $G$ is intransitive. Then $G \in \mathscr{C}$.
\end{proposition}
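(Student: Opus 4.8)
The plan is to reduce, via the machinery of Lemma~\ref{lemma:L1}, to a short list of possible orbit-size partitions of $8$ and then dispose of each. Since the representation is intransitive, the orbits have sizes summing to $8$ with each part at most, say, $7$; and since $\mu(G)=8$, Lemma~\ref{lemma:L1}(i) forces the restriction to the union of any sub-collection of orbits to be minimal of the obvious degree. First I would use Lemma~\ref{lemma:L1}(iii) to clear away any orbit of size $2$: if such an orbit exists then $G\cong C_2\times H$ with $\mu(H)=6$, so $H\in\C$ by Proposition~\ref{proposition:class456}, whence $G\in\C$ by Lemma~\ref{lemma:wright}. So we may assume every orbit has size in $\{3,4,5,6,7\}$, leaving the partitions $7+?$ (impossible, since the remainder is $1$ and a fixed point contributes nothing), $6+?$ (again the remainder is $1$ or $2$, handled above), $5+3$, $4+4$, $3+3$ (sum $6$, not $8$, so actually $3+5$ is the only split with a $3$), and so the genuine cases are $\mathbf{5+3}$, $\mathbf{4+4}$, and we must also check $\mathbf{3+5}$ is the same as $5+3$. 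Wait --- I should also keep $7+1$, $6+2$, $5+2+1$ etc.\ only to discard them; after the size-$2$ reduction and removing fixed points the surviving partitions are exactly $5+3$ and $4+4$.

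For the $\mathbf{5+3}$ case, write the orbits as $A_1$ (size $5$) and $A_2$ (size $3$). By Lemma~\ref{lemma:L1}(iv) applied to the size-$3$ orbit, $G$ contains an element $(\,1,\beta)$ with $\beta$ a $3$-cycle on $A_2$; and the projection $G\pi_1$ to $\Sym(A_1)$ is transitive of degree $5$, so $5\mid|G\pi_1|$ and hence $5\mid|G|$, giving an element of order $5$ whose only possible nontrivial part lives in $\Sym(A_1)$ (by Lemma~\ref{lemma:L1}(vii), with $p=5>3$), i.e.\ a $5$-cycle $(\alpha,1)\in G$. Then $\langle(\alpha,1),(1,\beta)\rangle\cong C_5\times C_3$ is nilpotent of minimal degree $8$, so $G\in\C$.

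For the $\mathbf{4+4}$ case, write the orbits as $A_1=\{a,b,c,d\}$ and $A_2=\{e,f,g,h\}$. By Lemma~\ref{lemma:L1}(ii) there is a nontrivial element of $G$ supported only on $A_1$; I would run through the possibilities for its cycle type ($3$-cycle, $2$-cycle, double transposition, $4$-cycle) and symmetrically for $A_2$, using parts (v) and (vi) of Lemma~\ref{lemma:L1} to upgrade: a $3$-cycle supported on $A_1$ forces $\Alt(A_1)\times\{1\}\le G$, and a single transposition $((a\,b),1)$ forces also $((c\,d),1)\in G$, hence a double transposition on $A_1$; in the remaining subcase the only nontrivial element supported on $A_1$ alone is a $4$-cycle or a double transposition. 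Combining with the mirror-image statement for $A_2$, in every subcase $G$ contains an internal direct product $K\times L$ with $K\le\Sym(A_1)$, $L\le\Sym(A_2)$ both nontrivial and $\mu(K)+\mu(L)=8$ --- e.g.\ $\Alt(4)\times\langle\text{$4$-cycle}\rangle$, $\langle C_2\times C_2\rangle\times\langle C_2\times C_2\rangle$, $C_4\times C_4$, $\Alt(4)\times\Alt(4)$, etc.\ --- and since $\mu(K),\mu(L)\le 4<8$ we have $K,L\in\C$ by Proposition~\ref{proposition:class456}, so $K\times L\in\C$ by Lemma~\ref{lemma:wright}, and therefore $G\in\C$.

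The one point needing genuine care --- and the step I expect to be the main obstacle --- is the bookkeeping in the $4+4$ case: one must be sure that \emph{some} nontrivial element of $G$ is supported on $A_1$ alone (guaranteed by Lemma~\ref{lemma:L1}(ii)) and likewise for $A_2$, and then that the resulting $K$ and $L$ really do have $\mu(K)+\mu(L)=8$ rather than something smaller. The subtlety is that a double transposition on $A_1$ generates only $C_2$, which has $\mu=2$, not $4$; so if the only elements of $G$ supported on $A_1$ alone form a group of minimal degree $<4$, the direct-product trick as stated undershoots. To handle this I would note that in that situation the kernel $K_1$ of the projection $G\to\Sym(A_2)$ has $\mu(K_1)<4$, yet $K_1\trianglelefteq G$ and $G/K_1\hookrightarrow\Sym(A_2)$; one then argues that $G$ embeds in $\Sym(\mu(K_1))\times\Sym(A_2)$ with $\mu(K_1)+4<8$, contradicting $\mu(G)=8$ --- so in fact $K_1$ (and symmetrically $K_2$) must have minimal degree exactly $4$, which pins it down to one of $\Alt(4)$, $D_8$, $C_4$, or $C_2\times C_2$ by \cite[Table~2.1]{DM96}, all of which lie in $\C$, and we are done.
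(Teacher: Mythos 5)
Your reduction to the partitions $5+3$ and $4+4$, and your treatment of orbits of size $2$ and of the $5+3$ case (a $3$-cycle from Lemma~\ref{lemma:L1}(iv) plus a $5$-cycle from Lemma~\ref{lemma:L1}(vii), giving $C_5\times C_3$ of minimal degree $8$), are correct and essentially what the paper does. The gap is exactly where you suspected it: in the $4+4$ case your escape route is the claim that if $K_1=\ker(G\to\Sym(A_2))$ has $\mu(K_1)=m<4$, then $G$ embeds in $\Sym(m)\times\Sym(A_2)$. That implication is false. Knowing $K_1\trianglelefteq G$, $\mu(K_1)=m$ and $G/K_1\hookrightarrow\Sym(4)$ does not produce a faithful degree-$(m+4)$ representation of $G$: you would need a degree-$m$ action of $G$ itself whose kernel meets $K_1$ trivially, and a minimal faithful representation of the normal subgroup $K_1$ need not extend to $G$ (minimal degree is not subadditive over extensions). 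A concrete counterexample to your step is $G=C_4\rtimes C_4=\langle a,b\mid a^4=b^4=1,\ a^b=a^{-1}\rangle$ (entry $(16,4)$ in the Appendix, $\mu(G)=8$): acting on the cosets of $\langle a\rangle$ and of $\langle b\rangle$ gives a minimal embedding with two orbits of size $4$ in which $K_1=\operatorname{core}(\langle b\rangle)=\langle b^2\rangle\cong C_2$ is generated by a central double transposition, so $\mu(K_1)=2$, yet $G$ certainly does not embed in $\Sym(6)$. So the case $\mu(K_1)<4$ genuinely occurs and cannot be dismissed this way; since the whole $4+4$ analysis funnels into this step, that case is not proved.

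The paper takes a different and more computational route for the $4+4$ case, which sidesteps the issue entirely: first discard $2$-groups (nilpotent, hence in $\C$ --- note this already absorbs the counterexample above); then observe $G\le\Sym(4)\times\Sym(4)$ forces $|G|=2^a3^b$ with $3\mid|G|$; if $32$ divides $|G|$ the Sylow $2$-subgroup has minimal degree $8$ (as $32\nmid 7!$), and if $9$ divides $|G|$ the Sylow $3$-subgroup is conjugate to $\langle(1\,2\,3),(5\,6\,7)\rangle$ and Lemma~\ref{lemma:L1}(v) yields $\Alt(4)\times\Alt(4)\le G$; in the remaining cases $|G|\in\{12,24,48\}$ and the Appendix table pins $G$ down to $SL(2,3)$, $GL(2,3)$, $C_4\times\Alt(4)$ or $\Alt(4)\rtimes C_4$, each of which visibly contains a nilpotent subgroup ($Q_8$, $C_4\times C_4$ or $C_2\times C_2\times C_4$) of minimal degree $8$. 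If you want to salvage your kernel-based approach, you must first dispose of $2$-groups and then prove directly (not via the false embedding claim) that no remaining $G$ with $\mu(G)=8$ has $K_1\cong C_2$; as it stands that is the missing argument.
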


\begin{proof}
We may suppose throughout that $G$ is not a $2$-group. 
If $G$ has an orbit of size $2$ or $3$ then, by Lemma \ref{lemma:L1}, $G$ contains a subgroup that is an internal direct product of subgroups $K$ and $L$ of minimal degrees less than $7$ but adding up to $8$, so, by Proposition \ref{proposition:class456}, $K$ and $L$ both lie in $\C$, whence $G \in \C$.

Hence we may suppose that $G$ has exactly two orbits of size $4$, which me may take to be $\{1,2,3,4\}$ and $\{5,6,7,8\}$. In particular, $|G|$ must be divisible by $3$ and $4$, but not by $5$ or $7$. 
Let $S$ denote a Sylow $2$-subgroup and $T$ a Sylow $3$-subgroup of $G$.
 If $32$ divides $|S|$, then $\mu(S) \geq 8$ (because $32$ does not divide $7!$), so that $\mu(S)=8$ and $G \in \C$. If $9$ divides $|G|$, then $T$ is a Sylow $3$-subgroup of $\Sym(8)$, so, without loss of generality, $T=\langle (1\,2\,3), (5\,6\,7) \rangle$ and, by Lemma \ref{lemma:L1} (v), $G$ contains the subgroup $\Alt(\{1, 2 ,3, 4\}) \times \Alt(\{5,6,7,8\})$, so $G \in \C$.

Henceforth we may suppose that 32 and 9 
do not divide $|G|$, so that $|G|= 12$, $24$, or $48$.
From the Appendix, the only possibilities for $G$, up to isomorphism, are $SL(2,3)$, $GL(2,3)$, $C_4\times \Alt(4)$ and 
$\Alt(4)\rtimes C_4$.
Then $G$ contains a copy of $Q_8$ in the first two cases,
a copy of $C_4\times C_4$ in the third case, and a copy of $C_2\times C_2\times C_4$ in the last case. In all cases $G$ contains a nilpotent subgroup of minimal degree $8$, so $G\in \C$.
\end{proof}
\begin{theorem} \label{theorem:class8}
 Let $G$ be a group such that $\mu(G)= 8$ and the minimal permutation representation is transitive. Then $G\in  \C$ 
or $G$ is isomorphic to $(C_2\times C_2\times C_2)\rtimes C_7$ or
$(C_2\times C_2\times C_2)\rtimes  (C_7 \rtimes C_3)$, where the semidirect product actions in each case are nontrivial and unique up to isomorphism.
\end{theorem}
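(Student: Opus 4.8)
The plan is to work through the transitive subgroups of $\Sym(8)$ of minimal degree $8$ systematically, using the classification of transitive groups of degree $8$ and peeling off those that contain a nilpotent subgroup of minimal degree $8$. A transitive $G\le\Sym(8)$ has a point stabiliser of index $8$, so $|G|$ is divisible by $8$; write $|G|=2^a m$ with $m$ odd. If $2^5=32$ divides $|G|$, then a Sylow $2$-subgroup $S$ of $G$ has $\mu(S)\ge 8$ (since $32\nmid 7!$), so $\mu(S)=8$ and $G\in\C$; thus we may assume $2^4\mid |G|$ but $2^5\nmid|G|$, i.e. $a\in\{3,4\}$. Similarly, if $G$ contains $C_3\times C_3$ or $C_2\times C_2\times C_2\times C_2$ or any other nilpotent subgroup of minimal degree exactly $8$ (for instance $C_4\times C_4$, $C_4\times C_2\times C_2$, $C_3\times C_5$, $D_8\times$ something summing to $8$, etc.), then $G\in\C$ and we are done; so the argument reduces to those $G$ whose nilpotent subgroups all have minimal degree $\le 7$. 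This severely restricts the Sylow subgroups.

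Next I would split on whether $G$ is imprimitive or primitive. If $G$ is imprimitive with a block system, Proposition \ref{proposition:block} embeds $G$ in a wreath product $G_B^B\wr\bar G$ where either there are two blocks of size $4$, or four blocks of size $2$. In the two-block case, the block kernel is a subdirect product inside $\Sym(4)\times\Sym(4)$, and one argues as in Lemma \ref{lemma:L1} and Proposition \ref{proposition:class8} that $G$ picks up a nilpotent subgroup of degree $8$ (e.g. pulling $3$-cycles or double transpositions into each block, giving $C_4\times C_4$, $C_2\times C_2\times C_4$, $\Alt(4)\times\Alt(4)$, etc.), so $G\in\C$. In the four-blocks-of-size-$2$ case, $G\le C_2\wr\bar G$ with $\bar G$ transitive of degree $4$; the base group $C_2^4$ meets $G$ in a subgroup that, together with the structure of $\bar G\in\{C_4,\,C_2\times C_2,\,D_8,\,\Alt(4),\,\Sym(4)\}$, again forces a nilpotent subgroup of minimal degree $8$ (a direct product of cyclic $2$-groups of total degree $8$) unless the action collapses to something of smaller minimal degree. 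Handling the boundary cases here — making sure we really do produce a degree-$8$ nilpotent subgroup and are not secretly in a group of minimal degree $<8$ — is fiddly but routine.

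Finally, the primitive case. By the classification of primitive groups of degree $8$, the socle of $G$ is either elementary abelian $C_2^3$ (affine type, $G\le\mathrm{AGL}(3,2)$) or $\mathrm{PSL}(2,7)$ acting on the $8$ points of the projective line (almost simple type, $G\in\{\mathrm{PSL}(2,7),\mathrm{PGL}(2,7)\}$). In the almost simple case, $\mathrm{PSL}(2,7)\cong\mathrm{GL}(3,2)$ contains $C_7$ with $\mu(C_7)=7<8$, but it also contains $\Sym(4)$ (a point stabiliser in the degree-$7$ action / the stabiliser of a line), hence $C_2\times C_2\times C_2\not\le$ — actually one checks directly that $\mathrm{PSL}(2,7)$ and $\mathrm{PGL}(2,7)$ each contain a nilpotent subgroup of minimal degree $8$ (a dihedral Sylow $2$ of order $8$ in $\mathrm{PSL}(2,7)$ acts on $8$ points without fixed points since the stabiliser has order $21$, giving $\mu\ge 8$), so these lie in $\C$. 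That leaves the affine primitive groups $C_2^3\rtimes P$ with $P\le\mathrm{GL}(3,2)$ transitive on the $7$ nonzero vectors, forcing $7\mid|P|$, so $C_7\le P\le N_{\mathrm{GL}(3,2)}(C_7)=C_7\rtimes C_3$; thus $P$ is $C_7$ or $C_7\rtimes C_3$, yielding exactly $K\cong C_2^3\rtimes C_7$ and $L\cong C_2^3\rtimes(C_7\rtimes C_3)$. For these two I would show $\mu=8$ (the elementary abelian normal subgroup $C_2^3$ is core-free-complemented, so the affine action of degree $8$ is faithful and minimal — any faithful action of smaller degree would have to have an orbit on which $C_2^3$ acts faithfully, impossible since $\mu(C_2^3)=4$ yet $C_7$ then has nowhere to act; a short orbit-counting argument pins $\mu=8$) and that $K,L\notin\C$ by checking that every nilpotent subgroup has minimal degree $\le 7$ (the nilpotent subgroups are inside $C_2^3$, or are $C_7$, or $C_3$, or small $2$-groups, all of minimal degree $<8$). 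The nontriviality and uniqueness of the semidirect product actions follow because $\mathrm{GL}(3,2)$ has a unique conjugacy class of subgroups isomorphic to $C_7$ and to $C_7\rtimes C_3$.

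The main obstacle is the imprimitive case with four blocks of size $2$: the wreath product $C_2\wr\bar G$ is large, its subgroups are numerous, and one must carefully verify that every such $G$ with $\mu(G)=8$ genuinely contains a nilpotent subgroup of minimal degree $8$ rather than merely containing large nilpotent subgroups that fail to be minimally embedded. I expect this to require a short but careful case analysis on the image $\bar G$ and on how $G\cap C_2^4$ sits inside the base group, using parts (iii)–(vi) of Lemma \ref{lemma:L1} to extract transpositions and control the $2$-part.
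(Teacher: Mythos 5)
Your route (reduction by primitivity plus the classification of primitive groups of degree $8$) is genuinely different from the paper's, which instead enumerates the possible orders of $G$ (the numbers $8k$ and $16k$ with $k$ an odd divisor of $315$) and disposes of each by Sylow theory, normal-subgroup counting and the table in the Appendix. Your architecture could be made to work, but as written it has concrete defects. First, in the almost simple primitive case your key claim is false: a dihedral group of order $8$ acting regularly on $8$ points does not thereby have minimal degree $8$ --- $\mu(D_8)=4$ --- so ``acts without fixed points, giving $\mu\ge 8$'' proves nothing. The correct observations are that $\mu(\mathrm{PSL}(2,7))=7$ (it acts on the Fano plane), so that group is excluded by the hypothesis $\mu(G)=8$, and that the Sylow $2$-subgroup of $\mathrm{PGL}(2,7)$ is $D_{16}$, of minimal degree $8$, which is essentially how the paper kills the order-$336$ case. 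Second, in the affine case the containment $P\le N_{\mathrm{GL}(3,2)}(C_7)$ does not follow from $7\mid|P|$, since $P$ need not normalise its Sylow $7$-subgroup; what you need is that the only subgroups of $\mathrm{GL}(3,2)$ transitive on the seven nonzero vectors are $C_7$, $C_7\rtimes C_3$ and $\mathrm{GL}(3,2)$ itself, the last being absorbed by your $32\mid|G|$ reduction.

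Third, and most seriously, the imprimitive case with two blocks of size $4$ --- which you set aside as ``fiddly but routine'' --- is the bulk of the remaining work, and the tool you propose for it does not apply: Lemma \ref{lemma:L1} is stated for a group whose minimal degree equals the sum of its orbit lengths, whereas the block kernel $N=G\cap(\Sym(4)\times\Sym(4))$ is a proper subgroup of $G$ about whose minimal degree you know nothing a priori, so you cannot ``pull $3$-cycles or double transpositions into each block'' by citing that lemma. The case can be pushed through (using that $N$ is transitive on each block, that its two projections are transitive subgroups of $\Sym(4)$ interchanged by $G$, and subdirect-product normality constraints to rule out, e.g., $9\mid|G|$), but that argument still has to be written; until it is, the proof is incomplete.
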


\begin{proof}
Again we may suppose that $G$ is not a $2$-group and that $32$ does not divide $|G|$. 
Since the representation is transitive, $G$ contains a core-free subgroup of index $8$. In particular $8$ divides $|G|$. 
The order of $|G|$ must then be one of the following:
$$
24, \;40,\;56, \;72,\;120, \;168, \;280, \;360,\;504,\;840,\;2520,
$$
$$
48, \;80, \;112, \;144, \;240, \;336,\; 560,\;720,\;1008,\; 1680,\;5040.
$$
Let $S$ denote some Sylow 2-subgroup of $G$.
In the first row $|S|=8$, and in the second row $|S|=16$. If $\mu(S)=8$ then $G\in \C$. Hence we may suppose in the following that $\mu(S)< 8$, so that $S$ is isomorphic to $C_2\times C_2\times C_2$, $C_4\times C_2$, $D_8$ or $D_8\times C_2$.

We carefully consider each possibility for $|G|$, either obtaining a contradiction or verifying that $G\in\C$ or $G$ is isomorphic to one of the two groups listed. Note that for each order in the second row, $|G|/8=2k$ where $k$ is odd, so that $G$ also contains a core-free subgroup of order $k$. Thus, in all cases in both rows, we may suppose that $G$ contains a core-free subgroup $H$ of order $k$, where $k$ is the largest odd divisor of $|G|$.

If $|G|=24$ or $48$ then, as in the proof of the previous proposition, $G\in \C$. 
If $|G|=40$ then, from the Appendix, $\mu(G)\not=8$, a contradiction. 
If $|G|=56$ then, from the Appendix, 
$G\cong (C_2\times C_2\times C_2) \rtimes C_7$.
If $|G|=120$, $240$, $360$ or $720$ then $|H|=15$ or $45$ and it follows that $G$ contains a subgroup isomorphic to $C_3\times C_5$ of minimal degree $8$, so that $G\in \C$.
If $|G|=280$, $504$, $560$, $840$, $1008$ or $1680$ then $|H|=35$, $63$ or $105$ so that $G$ contains a subgroup isomorphic to $C_5\times C_7$ or $C_3\times C_7$ of minimal degree at least 10, contradicting that $\mu(G)=8$.

Suppose that $|G|=80$. Then $|H|=5$ and $S\cong D_8\times C_2$.
Because $H$ is core-free, there must be 16 Sylow 5-subgroups, forcing $S$ to be normal in $G$. Hence $G$ is the internal semidirect product of $S$ by $H$. The centre $Z$ of $S$ is isomorphic to $C_2\times C_2$ with no automorphisms of order $5$, so $ZH$ is a an abelian subgroup of $G$ of order $20$ and minimal degree $9$, contradicting that $\mu(G)=8$.

In all remaining cases, $k$ is divisible by $7$ or $9$. If $G$ has a normal subgroup of order $2$ or $4$, then $G$ contains a copy of $C_{14}$, so that $\mu(G)>8$, a contradiction, or $G$ contains a copy of $C_2\times C_3\times C_3$, so that $G\in \C$. 
If $G$ has a normal subgroup $N$ of order $16$ then $N\cong D_8\times C_2$, whose centre $Z$ is a characteristic subgroup of order $4$, so that $Z$ is a normal subgroup of $G$, and we are back in a previous case. Henceforth, we may suppose that $G$ has no normal subgroup of order $2$, $4$ or $16$.

Suppose that $|G|=72$ or $144$. 
Then $|H|=9$, so that 
$H\cong C_3\times C_3$ is a Sylow 3-subgroup of $G$. Since $H$ is core-free, there must be exactly 4 or 16 Sylow 3-subgroups of $G$.
In the first case, let $K$ be
the nontrivial kernel of the conjugation action of $G$ on these 4 Sylow $3$-subgroups.
If $K$ has an element of order $3$ then $H$ is not core-free, a contradiction. 
Thus $K$ must have an element $\alpha$ of order $2$ so that $\langle \alpha, H\rangle\cong C_2\times C_3\times C_3$ and $\mu(\langle \alpha, H\rangle)=8$, whence $G\in \C$.
We may suppose then that $G$ has 16 Sylow $3$-subgroups and $|G|=144$. Because $G$ is not simple, it must have a nontrivial proper normal subgroup $N$. If $3$ divides but $16$ does not divide $|N|$, then $N$, and therefore also $G$, has at most $4$ Sylow $3$-subgroups, contradicting that $G$ has 16 Sylow $3$-subgroups.
If $|N|=8$ then  $G$ contains a subgroup of order 72 and index 2, so normal in $G$, and we are back in the previous case. 
It remains to consider the case $|N|=48$.
But now $N$ must have 16 Sylow 3-subgroups and a normal Sylow 2-subgroup, which is also a normal subgroup of
$G$ of order $16$, a contradiction.

Suppose that $|G|=112$. Then $|H|=7$ and $S\cong D_8\times C_2$. Put $K=N_G(H)$. Because $H$ is core-free, there must be 8 Sylow 7-subgroups of $G$, so $|K|=14$. 
Because $G$ is not simple, it must have a nontrivial proper normal subgroup $N$. If $|N|=7$, 14 or 28 then there is a unique Sylow 7-subgroup of $N$, which must be normal in $G$, contradicting that $H$ is core-free. 
If $|N|=8$ then $NH$ is a subgroup of $G$ of order 56 and index 2, which must be normal. 
We may suppose then that $|N|=56$. Let $T$ be a Sylow 2-subgroup of $N$.
From the Appendix,
$N\cong (C_2\times C_2\times C_2)\rtimes C_7$, 
so $T\cong C_2\times C_2\times C_2$ and
$T$  is normal in both $N$ and $G$, and $N=TH$. Let $z$ be any element of order $2$ in $K$ and put $Z=\langle z\rangle$ so $K=HZ$. Then $G=NZ=THZ=TK$ is an internal semidirect product of $T$ by $K$. If the conjugation action of $K$ on $T$, regarded as a vector space over the field with 3 elements, is faithful then ${\rm GL}(2,3)$ contains a subgroup of order 14,
contradicting the well-known fact that the normaliser of a Sylow 7-subgroup of ${\rm GL}(2,3)$ has order 21. Certainly the action of $H$ on $T$ is faithful (because $H$ is not normal in $N$), so the action of $z$ on $T$ must be trivial. Hence $S\cong TZ\cong C_2\times C_2\times C_2\times C_2$, contradicting that $S\cong D_8\times C_2$.

Suppose $|G|=168$. Then
$|H|=21$ and 
 $S\cong C_2\times C_2\times C_2$, $C_4\times C_2$ or $D_8$. If $H\cong C_3\times C_7$ then $\mu(G)\ge 10$, contradicting that $\mu(G)=8$. It follows that $H\cong C_7\rtimes C_3$, where the number of Sylow $3$-subgroups of $H$ is 7. If $G$ is simple then it is well-known that $G$ has a subgroup of index $7$, so that $\mu(G)\le 7$, a contradiction. Hence $G$ has a nontrivial proper normal subgroup $N$. If the order of $N$ is 3 or 6 then it follows that $G$ contains an element of order 21 so that $\mu(G)\ge 10$, a contradiction.  If the order of $N$ is 7, 14, 21, 28, 42 or 84 then there is a unique Sylow 7-subgroup of $N$, which must be normal in $G$, contradicting that $H$ is core-free. If the order of $N$ is $12$ or $24$ then $N$ must contain all Sylow 3-subgroups of $G$ and there can be at most 4 of them, contradicting that $H$ has 7 Sylow 3-subgroups. If $|N|= 56$ then $G$ is the internal semidirect product of $N$ by a cyclic group of order $3$ and, from the Appendix, $N\cong (C_2\times C_2\times C_2)\rtimes C_7$, so that 
$G\cong (C_2\times C_2\times C_2)\rtimes (C_7 \rtimes C_3)$, containing a normal subgroup of order 8. Thus we may suppose $|N|=8$, 
so that $N=S$. 
Then $G$ is the internal semidirect product of $S$ by $H$. If $S\cong C_4\times C_2$ or $D_8$ then it follows that $G$ contains an element of order 14, so that $\mu(G) \ge 9$, a contradiction. Hence $S\cong C_2\times C_2\times C_2$ so that again
$G\cong (C_2\times C_2\times C_2)\rtimes (C_7 \rtimes C_3)$ and we are done.

Suppose $|G|=336$. Then $|H|=21$ and $S\cong D_8\times C_2$. As in the case $|G|=168$, we have $H\cong C_7\rtimes C_3$ 
with 7 Sylow $3$-subgroups. Because $G$ is not simple, it must have a nontrivial proper normal subgroup $N$. If the order of $N$ is 3, 6, 7, 12, 14, 21, 24, 28, 42 or 84 then we obtain contradictions as in the case $|G|=168$.
If $|N|=48$ then, from the table, either $G\in \C$, and we are done, or $N\cong C_2\times {\rm Sym}(4)$ has a characteristic subgroup of order 24 (isomorphic to $C_2\times {\rm Alt}(4)$), which is then normal in $G$, and we are again back in the earlier list.
If $|N|=112$ then we obtain a contradiction as before (in the case $|G|=112$), with minor adjustments (with any Sylow 7-subgroup in the role of $H$). If $|N|=8$ then $|NH|=168$. If $|N|=56$ then joining $N$ with any Sylow-3 subgroup again gives a subgroup of order 168. In either case we get a subgroup of $G$ of index $2$, so without loss of generality we may suppose $|N|=168$. Choose any element $z$ of order $2$ outside $N$, which must exist because $S\cong D_8\times C_2$, and put $Z=\langle z\rangle$. Then $G=NZ$ is a semidirect product. If the action of $z$ is an inner automorphism, say by conjugation by an element $n$ of $N$, which we may also take to be of order $2$, then it follows that the action of $nz$ is trivial, and we can find an element of $G$ of order 14, again leading to a contradiction. 
Hence we may suppose the action of $z$ is by an outer automorphism.
If $N$ is simple then $N\cong {\rm GL}(3,2)$ and
$G\cong {\rm GL}(3,2)\rtimes C_2$, and it is well known that its Sylow 2-subgroup is isomorphic to $D_{16}$, contradicting that $S\cong D_8\times C_2$. Hence $N$ is not simple. By the same argument as in the paragraph where we considered $|G|=168$, we conclude that $N\cong (C_2\times C_2\times C_2)\rtimes (C_7 \;{\rm sd}\; C_3)$. By the same reasoning as in the paragraph where we considered $|G|=112$, we conclude that
$G=NZ\cong (C_2\times C_2\times C_2)\rtimes (C_7 \;{\rm sd}\; (C_3\rtimes C_2))$, and again get a contradiction by proving the action of $z$ must be trivial either on a Sylow 7-subgroup or on the base group $C_2\times C_2\times C_2$.
 
Suppose finally that $|G|=2520$ or $5040$. Then $|H|=315$. If no Sylow subgroup of $H$ is normal in $H$ then a simple count shows that $H$ has 7 Sylow $3$-subgroups, 21 Sylow $5$-groups and $15$ Sylow $7$-subgroups, from which it follows quickly that $H$ has an element $\alpha$ of composite order involving at least two different primes. If any Sylow subgroup of $H$ is normal in $H$, again it follows quickly that $H$ has an element $\alpha$ of composite order involving at least two different primes. If $|\alpha|$ is not divisible by $15$ then $H$ has a subgroup isomorphic to $C_3\times C_7$ or $C_5\times C_7$ of minimal degree larger than $8$, a contradiction.  Hence $|\alpha|$ is divisible by $15$ so that $H$ has a subgroup isomorphic to $C_3\times C_5$ of minimal degree $8$, so that $G\in \C$. 

This completes the proof of the theorem.
\end{proof}

Combining results so far we can prove the following stepping-stone towards our main theorem below (Theorem \ref{theorem:degree9}).

\begin{theorem} \label{theorem:no8}
If $G$ and $H$ are groups such that $\mu(G\times H)\le 8$
then $\mu(G \times H) = \mu(G) + \mu(H)$.
\end{theorem}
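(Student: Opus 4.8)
The plan is to reduce everything to the case already handled by Lemma~\ref{lemma:wright}. I will argue that, unless one of $G$ and $H$ is trivial (when the asserted identity is vacuous), both $G$ and $H$ must lie in Wright's class $\mathscr{C}$, and then $\mu(G\times H)=\mu(G)+\mu(H)$ is immediate. Relabelling if necessary, assume $\mu(H)\le\mu(G)$; since $G$ and $H$ embed in $G\times H\hookrightarrow\Sym(\mu(G\times H))$ we have $\mu(H)\le\mu(G)\le\mu(G\times H)\le 8$. If $\mu(G)\le 6$ then $\mu(H)\le 6$ too, so $G,H\in\mathscr{C}$ by Proposition~\ref{proposition:class456} and we are done. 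Hence it remains to run through $\mu(G)\in\{7,8\}$, and by Theorem~\ref{theorem:G7}, Proposition~\ref{proposition:class8} and Theorem~\ref{theorem:class8} the only possible obstructions to lying in $\mathscr{C}$ are the degree-$7$ group of \eqref{H} (write $H_7$ for it) and the two degree-$8$ groups $K$ and $L$.

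The core of the argument is a rigidity statement for each of $H_7$, $K$ and $L$: in any sufficiently small faithful permutation representation the centraliser of the image is too small to contain a nontrivial commuting direct factor intersecting the image trivially. For $H_7$: since $\mu(H_7)=7$, a short inspection of its subgroup lattice shows that every faithful permutation representation of $H_7$ of degree at most $8$ is the standard degree-$7$ action of \eqref{H} together with at most one fixed point. So if $G\cong H_7$ and $\mu(G\times H)\le 8$, I may embed $G\times H$ in $\Sym(8)$ so that the image $\overline G$ of $G$ fixes the point $8$ and acts on $\{1,\dots,7\}$ as in \eqref{H}. The image $\overline H$ of $H$ commutes with $\overline G$; as the $\overline G$-orbits $\{8\}$, $\{1,2,3\}$ and $\{4,5,6,7\}$ have pairwise distinct sizes, $\overline H$ fixes each of them setwise (as in the proof of Corollary~\ref{corollary:centraliser1}), so $\overline H\le C_{\Sym(7)}(\overline G)=\langle(4\,5\,6\,7)\rangle$ by Proposition~\ref{proposition:subdirect}. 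But the unique involution $(4\,6)(5\,7)=\big((1\,2)(4\,5\,6\,7)\big)^{2}$ of this $C_{4}$ already lies in $\overline G$, so every nontrivial subgroup of $C_{\Sym(7)}(\overline G)$ meets $\overline G$ nontrivially; since $\overline G\cap\overline H=\{1\}$ this forces $\overline H=\{1\}$, i.e.\ $H$ is trivial, a contradiction. Interchanging $G$ and $H$ disposes of $\mu(H)=7$ with $H\cong H_7$ in the same way.

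For $K$ and $L$: here $\mu(K)=\mu(L)=8$, so $\mu(G\times H)\le 8$ forces $\mu(G\times H)=8$, and I embed in $\Sym(8)$. Using that each of $K$ and $L$ has a unique minimal normal subgroup --- the elementary abelian base $C_2\times C_2\times C_2$ --- one checks that the only core-free subgroup of index at most $8$ is the point stabiliser of the standard degree-$8$ action, so the degree-$8$ faithful representation is unique up to permutation equivalence, and (as noted before Proposition~\ref{proposition:class8}) its centraliser in $\Sym(8)$ is trivial. Hence if $G\cong K$ or $L$ the image of $H$ is trivial, a contradiction, and symmetrically for $H\cong K$ or $L$. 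The single remaining mixed possibility --- $\mu(G)=8$ with $G\in\mathscr{C}$ but $H\cong H_7$, and its mirror image --- is excluded because then the image of $H$ has orbits of sizes $1,3,4$, so the image of $G$ lies in a centraliser isomorphic to $C_4$, giving $\mu(G)\le 4$, absurd. Assembling the cases: whenever $\mu(G\times H)\le 8$ and $G,H$ are both nontrivial we must have $G,H\in\mathscr{C}$, so $\mu(G\times H)=\mu(G)+\mu(H)$ by Lemma~\ref{lemma:wright}.

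The hard part will be the rigidity input: pinning down that $H_7$, $K$ and $L$ admit essentially unique faithful permutation representations of the relevant degree --- in particular that $H_7$ has no fixed-point-free faithful action of degree $8$ --- which is exactly what allows the centraliser computations of Proposition~\ref{proposition:subdirect} and the triviality of $C_{\Sym(8)}(K)$ and $C_{\Sym(8)}(L)$ to be brought to bear. Once that is secured, the theorem is just bookkeeping over the possibilities for $\mu(G)$ and $\mu(H)$.
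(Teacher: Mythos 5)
Your proposal is correct and follows essentially the same route as the paper: reduce via Lemma~\ref{lemma:wright}, Proposition~\ref{proposition:class456}, Theorem~\ref{theorem:G7}, Proposition~\ref{proposition:class8} and Theorem~\ref{theorem:class8} to the three exceptional groups, and then show that the centraliser in $\Sym(8)$ of any embedding of each of them is too small to contain a nontrivial complement. The only real difference is organisational: the ``rigidity'' you defer (uniqueness, up to a fixed point, of the faithful representations of degree at most $8$ of the group in \eqref{H}, and of the degree-$8$ representations of $K$ and $L$) is true and routine to check from the core-free subgroups, and corresponds to what the paper verifies by direct computation, namely that no element of order $4$ in $\Sym(8)$ inverts a product of two disjoint $3$-cycles, together with the stated triviality of $C_{\Sym(8)}(K)$ and $C_{\Sym(8)}(L)$.
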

\begin{proof}
Suppose by way of contradiction, that there exist subgroups $G$ and $H$ of $\Sym(8)$ such that $\langle G, H \rangle=GH$ is an internal direct
product and $\mu(G \times H) < \mu(G) + \mu(H)$. Certainly $G$ and $H$ are nontrivial. By Lemma \ref{lemma:wright}, it is not the case that
both $G$ and $H$ lie in $\mathscr{C}$. Without loss of generality, we may suppose that $G \not\in \mathscr{C}$. By Proposition \ref{proposition:class456},
$\mu(G) \geq 7$. If $\mu(G)=8$ then, 
by Proposition \ref{proposition:class8} and
Theorem \ref{theorem:class8}, $G$ is isomorphic to $K$ or $L$, described in the preamble before Proposition \ref{proposition:class8}, so $H$ is
trivial (since $C_{\Sym(8)}(K)$ and $C_{\Sym(8)}(L)$ are both trivial),
a contradiction. Hence $\mu(G)=7$. 

By Theorem \ref{theorem:G7}, $G\cong \langle a,b\,|\,a^3=b^4=1, a^b=a^{-1} \rangle$. Without loss of generality, we may take $a=(1\,2\,3)$
or $(1\,2\,3)(4\,5\,6)$ and $b$ a permutation of order $4$ that inverts $a$ by conjugation. By a straightforward calculation, the only
permutations of $\Sym(8)$ that invert $(1\,2\,3)(4\,5\,6)$ by conjugation have order $2$ or $6$, contradicting that $|b|=4$.
Hence
$a=(1\,2\,3)$. Clearly $b$ must be one of $(1\,2)\sigma$, $(1\,3)\sigma$ or $(2\,3)\sigma \}$, where $\sigma$ is a
$4$-cycle that fixes $1$, $2$ and $3$. Without loss of generality, $b=(1\,3)(4\,5\,6\,7)$  and $G=\langle a, b\rangle$. Clearly $C_{\Sym(8)}(G)=\langle (4\,5\,6\,7)\rangle$ and $C_{\Sym(8)}(G) \cap G= \langle (4\,6)(5\,7)\rangle$. But $\langle G,H \rangle=GH$ is an internal direct product, so $H
\cap G= \{1\}$ and $H \leq C_{\Sym(8)}(G)$. It follows quickly that $H$ is trivial, again a contradiction.
\end{proof}

\subsection{The $\Sym(9)$ Case}

Again we consider in turn transitive and intransitive embeddings, though in both cases now there are groups that fall outside Wright's class $\C$. We show directly that every nontrivial subgroup of the centraliser intersects nontrivially with our minimally embedded group.

\begin{proposition} \label{proposition:intrans9}
Let $G$ be a group such that $\mu(G)=9$
and its minimally embedded image in $\Sym(9)$ is intransitive.
Identify $G$ with its embedded image and let $C:=C_{\Sym(9)}(G)$.
Then every nontrivial subgroup of $C$ intersects $G$
nontrivially.
\end{proposition}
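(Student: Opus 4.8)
The plan is to split on whether $G\in\mathscr{C}$. If $G\in\mathscr{C}$, then $C$ is abelian by Corollary~\ref{corollary:centraliser2}, hence nilpotent, and every nontrivial subgroup of $C$ meets $G$ nontrivially by Proposition~\ref{proposition:centraliser}; there is nothing more to do. So assume $G\not\in\mathscr{C}$. Discarding fixed points does not change $\mu(G)$, so the orbit sizes of $G$ form a partition of $9$ into parts at least $2$, namely one of $7+2$, $6+3$, $5+4$, $5+2+2$, $4+3+2$, $3+3+3$, $3+2+2+2$. I would rule out or reduce each of these in turn.

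The partition $3+3+3$ cannot occur: Lemma~\ref{lemma:L1}(iv) applied to each of the three orbits produces $3$-cycles $\alpha_1,\alpha_2,\alpha_3\in G$, each supported on a single orbit, and $\langle\alpha_1,\alpha_2,\alpha_3\rangle\cong C_3\times C_3\times C_3$ is a nilpotent subgroup of minimal degree $9$, forcing $G\in\mathscr{C}$, a contradiction. Next, if $G$ has an orbit of size $2$, then Lemma~\ref{lemma:L1}(iii) gives $G\cong C_2\times K$ with $\mu(K)=7$, where $K$ is the minimally embedded restriction of $G$ to the remaining orbits; since $G\not\in\mathscr{C}$ we have $K\not\in\mathscr{C}$, so by Theorem~\ref{theorem:G7} the embedding of $K$ is permutation equivalent to the group $H$ of \eqref{H}, which is intransitive with orbit sizes $3$ and $4$. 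Comparing with the list of partitions, the only way this is consistent is that $G$ has orbit sizes $3$, $4$, $2$ (arising from $7+2$ or $4+3+2$); in every other case $K$ would itself have an orbit of size $2$ and hence, by Lemma~\ref{lemma:L1}(iii) and Proposition~\ref{proposition:class456}, lie in $\mathscr{C}$. Thus, up to relabelling, $G=\langle(8\,9)\rangle\times H$ with $H$ acting on $\{1,\dots,7\}$; the orbits $\{1,2,3\}$, $\{4,5,6,7\}$, $\{8,9\}$ have distinct sizes, so by Corollary~\ref{corollary:centraliser1}
\[
C=C_{\Sym(\{1,2,3\})}(\Sym(3))\times C_{\Sym(\{4,5,6,7\})}(\langle(4\,5\,6\,7)\rangle)\times C_{\Sym(\{8,9\})}(\langle(8\,9)\rangle)=\langle(4\,5\,6\,7),(8\,9)\rangle\cong C_4\times C_2 ,
\]
a $2$-group whose three involutions $(4\,6)(5\,7)=\big((1\,2)(4\,5\,6\,7)\big)^2$, $(8\,9)$ and $(4\,6)(5\,7)(8\,9)$ all lie in $G$. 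Since every nontrivial subgroup of a nontrivial $2$-group contains an involution, every nontrivial subgroup of $C$ meets $G$ nontrivially.

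It remains to handle the partitions $6+3$ and $5+4$, where $G$ has no orbit of size $2$, the restriction $G|_{A_i}$ to each orbit is transitive, and $\mu(G|_{A_i})=|A_i|$ by Lemma~\ref{lemma:L1}(i) — a condition that severely restricts the possibilities for $G|_{A_i}$. For $5+4$: Lemma~\ref{lemma:L1}(vii) puts a $5$-cycle of $G$ supported on the size-$5$ orbit into $G$, and I would show that $\ker(G\to G|_{A_1})$ contains a nilpotent subgroup of minimal degree $4$ supported on the size-$4$ orbit (using that a nontrivial normal subgroup of a transitive subgroup of $\Sym(4)$ fails to contain a copy of $C_4$ or $C_2\times C_2$ only when it is a central $C_2$), so that $G$ has a nilpotent subgroup of minimal degree $9$ and hence $G\in\mathscr{C}$; the case where the lift fails narrows $G$ down through $|G|\in\{20,40,120,240\}$ to the single exception $G\cong Q_{20}$, the dicyclic group of order $20$, whose minimal embedding has orbit sizes $5$ and $4$, for which Corollary~\ref{corollary:centraliser1} gives $C\cong C_4$ with its unique involution — the central involution of $Q_{20}$ — lying in $C\cap G$, so again every nontrivial subgroup of $C$ meets $G$. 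For $6+3$: Lemma~\ref{lemma:L1}(iv) puts a $3$-cycle supported on the size-$3$ orbit into $G$, and I would show that $\ker(G\to G|_{A_2})$ always contains a nilpotent subgroup of minimal degree $6$ — this rests on the fact that the transitive subgroups of $\Sym(6)$ with $\mu=6$ form a short list (essentially those divisible by $9$ or by $16$, together with $C_2\times\Alt(4)$), each containing such a nilpotent subgroup, and on checking that one survives in the kernel — again yielding $G\in\mathscr{C}$.

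The routine parts are the $\mathscr{C}$ case and the $3+3+3$ and size-$2$ arguments. The real work, and the main obstacle, lies in the last paragraph: first pinning down exactly which transitive subgroups of $\Sym(5)$ and $\Sym(6)$ can appear as $G|_{A_i}$ (forced by $\mu(G|_{A_i})=|A_i|$), and then tracking which "local" nilpotent subgroup — a $p$-cycle, a $C_4$, a $C_2\times C_2$, a $C_2\times C_2\times C_2$, or a $C_3\times C_3$ — actually lifts from $G|_{A_i}$ to a subgroup of $G$ via the normal subgroups $\ker(G\to G|_{A_j})$ (invoking Schur--Zassenhaus where the relevant orders are coprime). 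The delicate point is that this lifting genuinely can fail; identifying the finitely many groups where it fails — $C_2\times H$ for the size-$2$ partitions and $Q_{20}$ for $5+4$ — and then disposing of each by the explicit centraliser computations above, is the crux of the proof.
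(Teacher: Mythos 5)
Your reduction to orbit partitions, the $\mathscr{C}$ case, the $3+3+3$ case and the orbit-of-size-$2$ case all match the paper and are sound (noting that the three involutions of $C\cong C_4\times C_2$ all lie in $G$ is a slightly cleaner finish to that case than the paper's). The gaps are in the two cases where all the real work lives. For $5+4$, your claim that failure of the lift narrows $G$ to the single exception $Q_{20}$ is false. Take $G=\{(s,c)\in\Sym(5)\times C_4 : \mathrm{sgn}(s)\equiv c \bmod \langle c^2\rangle\}$, the fibred product of order $240$, a non-split central extension of $\Sym(5)$ by $Z\cong C_2$ with $G/\Alt(5)\cong C_4$. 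Its normal subgroups are $1$, $Z$, $\Alt(5)$, $\Alt(5)Z$, $G$; every subgroup of index at most $8$ either contains $Z$ or equals $\Alt(5)$ (a subgroup avoiding $Z$ embeds in $\Sym(5)$, and the extension does not split), so $\mu(G)=9$, realised by the cosets of $\Alt(5)$ (orbit of size $4$, image $C_4$) and of the preimage of $\Sym(4)$ (orbit of size $5$, image $\Sym(5)$). Here the kernel supported on the $4$-orbit is exactly $Z$, a central $C_2$, so your lift fails; and since $C_G(C_5)\cong C_{10}$, $C_G(C_3)\cong C_{12}$ and a $2$-group has even minimal degree, $G$ has no nilpotent subgroup of degree $9$, so $G\notin\mathscr{C}$ and $G\not\cong Q_{20}$. (The proposition still holds for this $G$ — $C\cong C_4$ supported on the $4$-orbit and its involution is the image of $Z\le G$ — but your argument as written never examines it.)

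For $6+3$ the assertion that $\ker(G\to G|_{A_2})$ always contains a nilpotent subgroup of minimal degree $6$ is precisely the hard point and you give no argument for it: that kernel is only constrained to be a nontrivial normal subgroup of a transitive subgroup of $\Sym(6)$, and could a priori be as small as a $C_2$ or $V_4$ inside $C_2\wr\Sym(3)$, which certifies nothing. The paper sidesteps both difficulties by not attempting to prove $G\in\mathscr{C}$ in these cases: for $5+4$ it splits on $G\pi_1\in\{C_4,\,C_2\times C_2,\,D_8,\,\Alt(4),\,\Sym(4)\}$ and uses Lemma~\ref{lemma:L1}(ii) with explicit manipulations of lifts to place a generator of $C$ (or its square) inside $G$; for $6+3$ it computes $|C|\in\{1,2,3,6\}$ via $|\Fix(H_1)|$, kills $|C|=3,6$ by order considerations, and for $|C|=2$ shows the generator $z$ of $C$ lies in $G$ by the count $|G|=2|K_1||K_2|$ and the decomposition $G\cong K_2\times(K_1\rtimes\langle\gamma\rangle)$, which would force $\mu(G)\le 7$ otherwise. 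To salvage your route you would need either to complete the classification of exceptional groups (there is more than one) or to switch, as the paper does, to arguing directly that a generator of the cyclic group $C$ lies in $G$.
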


\begin{proof}
If at any stage we conclude $G\in\C$ then we are done by Corollary \ref{corollary:centraliser2}
and Proposition \ref{proposition:centraliser}.
Without loss of generality, we only need to consider the following three cases.
\newline

\noindent \underline {\textbf{Case (a):}}\, $G$ has an orbit $\{8,9\}$. \newline
\noindent By Lemma \ref{lemma:L1}, $G \cong C_2 \times H$ where $H \leq \Sym(\{1,2,3,4,5,6,7\})$ and $\mu(H)=7$. 
If $H\in \C$, then $G\in\C$, and we are done. Otherwise, by Theorem \ref{theorem:G7}, without loss of generality, we may suppose that
$$
G = \langle (1\,2\,3), (1\,3)(4\,5\,6\,7), (8\,9) \rangle\;.
$$
Then
$C=\langle (4\,5\,6\,7), (8\,9) \rangle$ and $G \cap C= \langle (4\,6)(5\,7), (8\,9) \rangle$. It quickly follows that every non-trivial
subgroup of $C$ intersects non-trivially with $G$. \newline

\noindent \underline {\textbf{Case (b):}}\, $G$ has an orbit $\{7, 8, 9\}$ and no orbit of size two. \newline

\noindent \underline {Subcase (i):}\, $G$ has two other orbits both of size $3$. \newline
\noindent By Lemma \ref{lemma:L1} (iv), $G$ contains a copy of $ C_3 \times C_3 \times C_3$, so $G \in \mathscr{C}$,
and we are done. \newline

\noindent \underline {Subcase (ii):}\, $G$ has an orbit $\{1,\ldots, 6\}$. \newline
\noindent We may regard $G$ as a subgroup of $\Sym(\{1,\ldots,6\}) \times \Sym(\{7,8,9\})$.  Let $\pi_1$ and $\pi_2$ be projections onto
$\Sym(\{1,\ldots,6\})$ and $\Sym(\{7,8,9\})$ respectively. 
Let $K_1 = \ker \pi_1|_G$ and $K_2 =\ker \pi_2|_G$ and observe that 
$\langle K_1, K_2 \rangle = K_1 K_2$ is an internal direct product.
By Lemma \ref{lemma:L1} (iv), we have $(7\, 8\, 9) \in G$. If moreover we have $G\pi_2=\langle (7\,8\,9) \rangle$ 
or that $G$ contains a $2$-cycle supported only on $\{7,8,9\}$, then $G \cong H \times K$ where $\mu(H)=6$ and $\mu(K)=3$, so that $H,K\in \C$, by
Proposition \ref{proposition:class456}, whence $G \in \mathscr{C}$, and we are done. 
Hence we may assume that $G\pi_2 = \Sym(\{7,8,9\})$ and that $G$ does not contain any $2$-cycle supported only on $\{7,8,9\}$. 
Therefore, $C$ is a subgroup of $\Sym(\{1,\ldots,6\})$.
Let $H_1$ be the stabiliser of the letter $1$, so that $H_1$ has index $6$ in $G$. By Theorem \ref{theorem:centraliser} and Proposition \ref{proposition:fix},
$|C|=|N_G(H_1)/H_1|=|\Fix(H_1)|= 1$, $2$, $3$ or $6$. If $C$ is trivial then we are done. 
If $|C|= 6$ then $H_1\le K_2$ and it follows that $|G|=18$ or $36$, so that, from the Appendix, $G\cong D_{18}$, whence
$G\in\C$, and we are done. We may suppose therefore that $|C|=2$ or $3$.
By Proposition \ref{proposition:fix} and Proposition \ref{proposition:block}, 
$G\pi_{1}$ embeds in $C_3 \wr \Sym(2)$ or $C_2 \wr \Sym(3)$ as a transitive subgroup.

Suppose first that $G\pi_{1}$ embeds inside $C_3 \wr \Sym(2)$. 
By Lemma \ref{lemma:L1} (i), $\mu(G\pi_{1})=6$. By the classification of transitive subgroups of $\Sym(6)$ (see \cite[Table 2.1]{DM96}),
it follows that $9$ divides $|G\pi_{1}|$. Therefore $27$ divides $|G|$ and $G$ contains a Sylow $3$-subgroup with 
minimal degree $9$. Hence $G \in \mathscr{C}$, and we are done.

Now suppose that $G\pi_{1}$ embeds inside 
\begin{equation}C_2 \wr \Sym(3)\cong  C_2 \times \Sym(4)\label{equation:w3}. \end{equation}
Then $C$ has order $2$ and may be identified under
this isomorphism with the
factor $C_2$ in the second group.
Let $z$ be the generator of $C$; we will show that $z\in G$.
Certainly there is some $\sigma  \in \Sym(\{7, 8, 9\})$ such that $\gamma:=z\sigma \in G$. 
We will show that $\sigma$ has order 1 or 3.
Suppose to the contrary that $\sigma$ has order $2$. 
Since $G\pi_2 \cong \Sym(3)$, we have $|G|=6|K_2|$. On the other hand, since no $2$-cycle supported only on $\{7,8,9\}$ is contained in $G$, 
we have $K_1= \langle (7\,8\,9) \rangle \cong C_3$ and so $|G|=3|G \pi_1|$. Therefore, $|G\pi_1|=2|K_2|$,
and so $|G|=2|K_1||K_2|$.  
Observe that $\gamma\not\in K_1K_2$ and $\gamma$ centralises $K_2$ and normalises $K_1$.
Upon comparing orders,
$$
G=\langle K_1, K_2, \gamma \rangle=K_1K_2\langle\gamma\rangle\cong K_2 \times (K_1 \rtimes \langle \gamma \rangle)\;.
$$
Since $K_1 = \langle (7\,8\,9) \rangle$ and $(7\,8\,9)^{\gamma}=(7\,9\,8)$, we have $K_1 \rtimes
\langle \gamma \rangle \cong \Sym(3)$, and since $K_2$ is isomorphic to a subgroup of $\Sym(4)$, by \eqref{equation:w3},
$G$ is isomorphic to a subgroup of $\Sym(3)
\times \Sym(4)$. Therefore $\mu(G) \leq \mu(\Sym(4) \times \Sym(3))=7$, contradicting that $\mu(G)=9$. 
Hence $\sigma$ has order 1 or 3, and it follows immediately that $z\in G$. Hence $C\le G$.\newline

\noindent \underline {\textbf{Case (c):}}\, $G$ has orbits $\{1, 2, 3, 4\}$ and $\{5, 6, 7 , 8, 9\}$. \newline
\noindent Let $\pi_1$ and $\pi_2$ be projections onto $\Sym(\{1, 2, 3, 4\})$ and $\Sym(\{5,\ldots, 9\})$ respectively. 
As before, let $K_1 = \ker \pi_1|_G$ and $K_2 =\ker \pi_2|_G$.
By Lemma \ref{lemma:L1} (v), without loss of generality, there is some $\gamma:=(5\,6\,7\,8\,9) \in G$. If $G\pi_{2}=\langle \gamma \rangle$, then $G \cong G\pi_1 \times C_5$ and $\mu(G\pi_1)=4$ by Lemma \ref{lemma:L1} (i),
so that $\mu(G\pi_1)\in\C$ 
by Proposition \ref{proposition:class456},
whence $G \in \mathscr{C}$, and we are done.  Therefore, $G\pi_{2}$ strictly contains $\langle \gamma \rangle$ and it follows that $C \leq \Sym(\{1, 2,
3, 4\})$.
If $G\pi_1=\Alt(\{1,2,3,4\})$ or $\Sym(\{1,2,3,4\})$, then $C=\{1\}$, and we are done. Therefore we may assume that
 $G\pi_1$ is isomorphic to $C_4$, $C_2 \times C_2$ or $D_8$.
 
Suppose first that $G\pi_{1}\cong C_4$. Without loss of generality, $G\pi_1$ is generated by $(1\,2\,3\,4)$, so that $C=G\pi_1$. By Lemma \ref{lemma:L1} (ii),
$(1\,2\,3\,4)\in G$ or $(1\,3)(2\,4)\in G$, and so $G \cap C \neq \{1\}$.

Now suppose that $G\pi_1\cong C_2\times C_2$, so that
$G\pi_{1}=C=\langle (1\,2)(3\,4), (1\,3)(2\,4) \rangle$. 
By Lemma \ref{lemma:L1}, we may suppose, without loss of generality, that $\alpha:=(1\,2)(3\,4) \in G$. 
We claim that $\beta:=(1\,3)(2\,4) \in G$.
Certainly $\beta\sigma \in G$ for some $\sigma \in \Sym(\{5,\ldots, 9\})$ and if $\sigma\in G$ or $|\sigma|$ is coprime with $|\beta|$, then $\beta \in G$ and
we are done. 
If $|\sigma|=6$ then we may replace $\sigma$ by $\sigma^3$.
Thus we may suppose that $|\sigma|=2$ or $4$ and that $\sigma\not\in G$.
Observe that $K_2=\langle \alpha \rangle$. On the one hand,
$|G|=|K_2||G\pi_2|=2|G\pi_2|$, and on the other, $|G|=|K_1||G\pi_1|=4|K_1|$. 
Therefore $|G|=2|K_1||K_2|$. Observe that $\langle K_1, K_2
\rangle=K_1 K_2$ is an internal direct product, 
$\beta\sigma\not\in K_1K_2$ and
$\beta\sigma$ centralises $K_2$ and normalises $K_1$.
Therefore,  
comparing orders, we have 
$$
G=\langle
\beta\sigma, K_1, K_2 \rangle=K_1K_2\langle\beta\sigma\rangle\cong
K_2 \times (K_1 \langle \beta\sigma \rangle)\cong
K_2 \times (K_1 \langle \sigma\rangle).
$$ 
But $K_2\cong C_2$ and $K_1\langle \sigma\rangle$ is a subgroup of
$\Sym(\{5,\ldots, 9\})$. Hence $G$ embeds in $C_2\times\Sym(5)$, so $\mu(G)\le 7$, contradicting that $\mu(G)=9$, and we are done.

Finally suppose that $G\pi_{1}\cong D_8$.
Without loss of generality, $G\pi_1=\langle r,s\rangle$
and $C=\langle r^2\rangle$ where
$r:=(1\,2\,3\,4)$ and $s:=(1\,2)(3\,4)$. 
We claim that $r^2 \in G$. 
This is immediate if $r \in G$, so we suppose that $r \not\in G$. 
Certainly, $r\sigma \in G$ for some $\sigma \in \Sym(\{5,\ldots, 9\})$
such that $\sigma\not\in G$ and $|\sigma|$ is divisible by $2$. 
If $|\sigma|=2$ or $6$ then $r^2=(r\sigma)^{|\sigma|}\in G$ and we are done.
Thus, we may suppose $|\sigma|=4$.
By Lemma \ref{lemma:L1} (ii), since
$r,r^{-1}\not\in G$, we have $r^{j}s \in G$ for some $j$. Hence
$$
\sigma^2=(r^{j-1}s\sigma)^2=(r^jsr\sigma)^2\in G,
$$ 
so that $r^2=(r\sigma)^{2}\sigma^{2} \in G$, and we are done.
\end{proof}

\begin{remark}
It can be verified by Magma that the only groups minimally embedded intransitively in $\Sym(9)$ that are not contained in $\mathscr{C}$ have orbits of size $2$ and $7$, or orbits of size $4$ and $5$. We do not prove this here as we do not need it for our main theorem. For more details the reader is referred to \cite{S10}.
\end{remark}

We have shown that if $G$ is a minimally embedded intransitive subgroup of $\Sym(9)$, then there is no subgroup $H$ of $\Sym(9)$ that
centralises $G$ such that $\mu(G \times H) < \mu(G) + \mu(H)$. Before we deal with the transitive case we observe the following lemma, whose proof is a straightforward direct calculation.

\begin{lemma} \label{lemma:diagonal9}
Let $W=C_3 \wr \Sym(3)$ and let the base group $B$ be generated by $x_1, x_2, x_3$. Let $U=\{ x_{1}^{i}x_{2}^{j}x_{3}^{k} \in B\,|\,i+j+k
\equiv 0\,\text{mod}\,3 \}$ and $V=\langle x_1x_2x_3 \rangle$. Then $V \subset U$ and $U$ and $V$ are the only non-trivial normal
subgroups of $W$ strictly contained in $B$.
\end{lemma}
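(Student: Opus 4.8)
The plan is to recast the question in terms of $\mathbb{F}_3[\Sym(3)]$-modules and then exploit that $\tau:=(1\,2\,3)$ acts unipotently in characteristic $3$. Identify the base group $B$ with $\mathbb{F}_3^3$ via $x_1^ix_2^jx_3^k\leftrightarrow(i,j,k)$, so that $W=C_3\wr\Sym(3)$ acts on $B$ by conjugation through its quotient $\Sym(3)$, permuting the three coordinates. Since $B$ is abelian, a subgroup $N\le B$ is normal in $W$ precisely when it is stable under this coordinate action, i.e.\ precisely when $N$ is an $\mathbb{F}_3[\Sym(3)]$-submodule of $B$. One checks immediately that $V$ (the diagonal) and $U$ (the subspace where the coordinates sum to $0$) are such submodules, that $V\subset U$ since $1+1+1\equiv0\bmod3$, and that $\dim V=1<2=\dim U$; in particular $V$ and $U$ are distinct, nontrivial, and properly contained in $B$. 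It therefore suffices to show that any submodule $N$ with $\{0\}\subsetneq N\subsetneq B$ equals $V$ or $U$.

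The crucial point is that $(\tau-1)^3=\tau^3-1=0$ in $\mathbb{F}_3[\langle\tau\rangle]$, since the characteristic is $3$ and $\tau^3=1$; hence $\tau-1$ acts nilpotently on every $\mathbb{F}_3[\langle\tau\rangle]$-module, and therefore $\tau$ has a nonzero fixed vector on every nonzero such module. As $\tau$ cycles $x_1\to x_2\to x_3\to x_1$ we have $(i,j,k)^\tau=(k,i,j)$, so the $\tau$-fixed subspace of $B$ consists exactly of the constant triples, i.e.\ it equals $V$, which is one-dimensional; consequently every nonzero submodule of $B$ meets $V$ nontrivially and so contains $V$. Passing to the module $B/V$ (of dimension $2$), one computes that $\overline{(i,j,k)}$ is $\tau$-fixed exactly when $(k-i,\,i-j,\,j-k)$ is constant, and this reduces to $i+j+k\equiv0\bmod3$; thus the $\tau$-fixed subspace of $B/V$ is $U/V$, again one-dimensional. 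Hence every nonzero submodule of $B/V$ contains $U/V$.

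Now let $N$ be a submodule with $\{0\}\subsetneq N\subsetneq B$. By the first fixed-point computation $V\subseteq N$, so $N/V$ is a submodule of $B/V$, and it is proper because $N\ne B$. If $N/V=\{0\}$ then $N=V$; otherwise $N/V$ contains $U/V$, and since $\dim(N/V)\le1=\dim(U/V)$ we get $N/V=U/V$, i.e.\ $N=U$. Reading this back through the correspondence of the first paragraph shows that $V$ and $U$ are the only nontrivial normal subgroups of $W$ strictly contained in $B$. The only thing that needs care is the exclusion of unexpected ``diagonal-type'' submodules, and this is precisely what the nilpotence of $\tau-1$ delivers; alternatively one could run the explicit element chase hinted at by the authors (given $1\ne n\in N$, conjugate $n$ by $\Sym(3)$ and take products to produce either $x_1x_2x_3$ or a sum-zero generator), but the module-theoretic route sidesteps the case-work.
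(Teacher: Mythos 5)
Your proof is correct. The paper itself does not write the argument out: it introduces the lemma with the remark that its proof is ``a straightforward direct calculation'', i.e.\ the intended route is an explicit element chase through the subgroups of $B\cong C_3\times C_3\times C_3$ that are invariant under coordinate permutation (take a nontrivial element of a normal subgroup $N\le B$, conjugate by elements of $\Sym(3)$ and multiply to force $x_1x_2x_3\in N$, then either $N=V$ or a sum-zero, non-constant element forces $N=U$). Your module-theoretic version accomplishes the same thing more structurally: after the (correct) observation that, $B$ being abelian, normality in $W$ amounts to $\mathbb{F}_3[\Sym(3)]$-submodule invariance, you only need invariance under the $3$-cycle $\tau$, and the identity $(\tau-1)^3=0$ in characteristic $3$ makes $B$ uniserial as an $\mathbb{F}_3[\langle\tau\rangle]$-module with socle series $0\subset V\subset U\subset B$; the two fixed-point computations (fixed space of $\tau$ on $B$ is $V$, on $B/V$ is $U/V$, both checked correctly) then pin down every proper nonzero submodule as $V$ or $U$, with the dimension count $\dim(N/V)\le 1$ closing the argument. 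What your route buys is the elimination of case-work and a slightly stronger conclusion (the subgroup lattice is already rigid under the cyclic group alone); what the paper's direct calculation buys is that it needs no module language and is checkable in a few lines by hand, which is all the authors require for Proposition \ref{proposition:trans9}.
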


\begin{proposition} \label{proposition:trans9}
Let $G$ be a group such that $\mu(G)=9$ and its minimally embedded image in $\Sym(9)$ is transitive. Identify $G$ with its image
and put $C:=C_{\Sym(9)}(G) \neq \{1\}$. Then $C \leq G$.
\end{proposition}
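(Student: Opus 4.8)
The plan is to analyse the action of $C$. Since $G$ is transitive, Theorem~\ref{theorem:centraliser} shows that $C$ is semiregular, so every $C$-orbit has size $|C|$ and hence $|C|$ divides $9$; as $C\neq\{1\}$, either $|C|=9$ or $|C|=3$. By Corollary~\ref{corollary:centraliser2}, $C$ is abelian, in particular nilpotent. If $|C|=9$ then $C$ is regular, so applying Theorem~\ref{theorem:centraliser} to $C$ itself (with trivial point stabiliser) gives $|C_{\Sym(9)}(C)|=|C|=9$; since $G$ centralises $C$ we have $G\le C_{\Sym(9)}(C)$ and hence $|G|\le 9$, so transitivity forces $|G|=9$ and $G=C_{\Sym(9)}(C)$; as $C$ is abelian, $C\le C_{\Sym(9)}(C)=G$, and we are done in this case.

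Now assume $|C|=3$. Writing $H$ for a point stabiliser, Proposition~\ref{proposition:fix} shows that $\Fix(H)$ is a block of size $|C|=3$, so $G$ preserves a system of three blocks of size three and induces a regular --- hence cyclic of order $3$ --- permutation group on each block. Proposition~\ref{proposition:block} then embeds $G$ in $C_3\wr\Sym(3)$, so $|G|$ divides $|C_3\wr\Sym(3)|=162$. (One may also invoke Lemma~\ref{lemma:diagonal9} here to identify $C$ with the centre $\langle x_1x_2x_3\rangle$ of $C_3\wr\Sym(3)$, though this is not strictly needed below.) If $G\in\mathscr{C}$ then, $C$ being nilpotent, Proposition~\ref{proposition:centraliser} tells us that every nontrivial subgroup of $C$ meets $G$ nontrivially, and since $C\cong C_3$ this gives $C\le G$ at once. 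So it remains to show that $G\notin\mathscr{C}$ is impossible.

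Assume $G\notin\mathscr{C}$. As every $p$-group is nilpotent (hence in $\mathscr{C}$), $G$ is not a $3$-group, so $|G|\in\{18,54,162\}$. If $|G|\in\{54,162\}$, a Sylow $3$-subgroup $P$ of $G$ has order $27$ or $81$; since $27\nmid 8!$, no such group embeds in $\Sym(8)$, so $\mu(P)\ge 9$, and as $P\le G\le\Sym(9)$ we get $\mu(P)=9$; but $P$ is nilpotent and $\mu(G)=9=\mu(P)$, so $G\in\mathscr{C}$, a contradiction. If $|G|=18$, then from the Appendix either $\mu(G)<9$, contradicting $\mu(G)=9$, or $G\cong D_{18}$, which belongs to $\mathscr{C}$ since all dihedral groups do (see Section~\ref{section:background}); again a contradiction. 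Hence $G\in\mathscr{C}$, and by the previous paragraph $C\le G$, completing the proof.

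The crux is the case $|C|=3$ with $G\notin\mathscr{C}$: everything there rests on bounding $|G|$ via the wreath-product embedding and on the observation that a $3$-group of order at least $27$ cannot act faithfully on $8$ points, which --- together with the known minimal degrees of the groups of order $18$ --- squeezes $G$ into Wright's class $\mathscr{C}$. The case $|C|=9$ is by contrast immediate from the standard description of the centraliser of a regular group.
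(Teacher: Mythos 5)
Your proof is correct, but after the common opening it takes a genuinely different route from the paper's. Both arguments begin the same way: semiregularity of $C$ (Theorem \ref{theorem:centraliser}) pins down $|C|$, the case $|C|=9$ forces $G$ to be regular abelian of order $9$ (the paper disposes of abelian $G$ by citing \cite[Theorem 4.2A]{DM96}, and your double-centraliser count is an acceptable substitute), and in the case $C\cong C_3$ both use Propositions \ref{proposition:fix} and \ref{proposition:block} to embed $G$ in $C_3\wr\Sym(3)$. From there the paper projects onto the top group $\Sym(3)$: if the image has order $3$ then $G$ is a $3$-group and Corollary \ref{corollary:centraliser} applies, while if it has order $6$ Lemma \ref{lemma:diagonal9} shows that $\ker\pi$ contains the diagonal subgroup $V$, which is of order $3$ and centralises $G$, so $V=C\le G$. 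You never touch Lemma \ref{lemma:diagonal9}: instead you bound $|G|$ by $162$, use $9\mid|G|$, and show $G\in\mathscr{C}$ in every case --- for $|G|=54$ or $162$ because a Sylow $3$-subgroup of order at least $27$ satisfies $27\nmid 8!$ and hence has minimal degree exactly $9$, and for $|G|=18$ by the Appendix (your dichotomy should strictly also mention $C_{18}$, whose minimal degree is $11>9$ rather than $<9$, but it is excluded by $\mu(G)=9$ just the same) --- and then conclude via Proposition \ref{proposition:centraliser}. The paper's route is purely structural and independent of the Appendix; yours is more arithmetic (orders, Sylow counting, one table lookup) but establishes the extra fact that every transitive minimally embedded subgroup of $\Sym(9)$ with nontrivial centraliser lies in $\mathscr{C}$, which is consistent with, and sharpens, the subsequent remark: the three transitive exceptions to membership of $\mathscr{C}$ must all have trivial centralisers.
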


\begin{proof}
We may assume $C$ is nontrivial and also that that $G$ is non-abelian, for otherwise, $G=C$ by \cite[Theorem 4.2A]{DM96}. Let $H$ be a core-free subgroup of $G$ that affords the
minimal faithful representation. By Theorem \ref{theorem:centraliser}, $C \cong N_{G}(H)/H$ and since $|G:H|=9$, $|N_{G}(H):H|=3$ and so $C
\cong C_3$. By Proposition \ref{proposition:fix}, $\Fix(H)$ is a block on which the induced permutation group acts regularly, so by Proposition
\ref{proposition:block}, $G$ embeds inside the wreath product $C_3 \wr \Sym(3)$. Let $\pi$ be the projection of $G$ onto the top group $\Sym(3)$. Now $\ker \pi$  is contained in the base group and so must be a $3$-group. Since $G$ is transitive on blocks, $G\pi$ has order $3$ or $6$.  If $|G\pi|=3$, then $G$ is a $3$-group and so, by Corollary \ref{corollary:centraliser}, $C \leq G$. If $|G\pi|=6$ then $\pi$ is surjective and since $\ker \pi$ is a normal subgroup of $G$ contained in the base group $B$ it is normalised by $\Sym(3)$. By
Lemma \ref{lemma:diagonal9}, this kernel must contain $V$, which is cyclic of order $3$ and central in $G$. Therefore $V=C$, and once
again $C\le G$.
\end{proof}

\begin{remark}
It can be verified that there are, up to isomorphism, $3$ transitive groups minimally embedded in $\Sym(9)$ not contained in $\mathscr{C}$. Again, we do not prove this here as we do not need it for our main theorem, and for more details the reader is referred to
\cite{S10}.
\end{remark}

Combining the results above we can now prove our main theorem:

\begin{theorem}\label{theorem:degree9}
If $G$ and $H$ are groups such that $\mu(G\times H)\le 9$
then $\mu(G \times H) = \mu(G) + \mu(H)$.
\end{theorem}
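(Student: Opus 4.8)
The plan is to reduce Theorem \ref{theorem:degree9} to the cases already settled. Suppose, for a contradiction, that there exist nontrivial groups $G$ and $H$ with $\mu(G\times H)<\mu(G)+\mu(H)$ and $\mu(G\times H)\le 9$. By Theorem \ref{theorem:no8} we may assume $\mu(G\times H)=9$; identify $G\times H$ with an internal direct product $GH$ inside $\Sym(9)$. By Lemma \ref{lemma:wright}, not both of $G,H$ lie in $\mathscr{C}$, so without loss of generality $G\notin\mathscr{C}$. Then $\mu(G)\ge 7$ by Proposition \ref{proposition:class456}, and since $H$ is nontrivial, $\mu(G)\in\{7,8\}$. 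In either case $H$ is a nontrivial subgroup of $\Sym(9)$ commuting with $G$ and meeting $G$ trivially.

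The core of the argument is to apply the results of the $\Sym(9)$ subsection to the embedded copy of $G$. First I would observe that, since $GH$ has minimal degree $9$ and $H\ne\{1\}$, the subgroup $G$ has $\mu(G)\le 9$; if $\mu(G)\le 8$ then we are in a situation governed by the $\Sym(8)$ analysis, and in fact the argument of Theorem \ref{theorem:no8} shows $H$ must be trivial — so we may as well run the whole thing inside $\Sym(9)$ and appeal to Propositions \ref{proposition:intrans9} and \ref{proposition:trans9}. Concretely: regard $G$ as a subgroup of $\Sym(9)$ via its restriction from $GH$; it need not be minimally embedded there, but we can replace $G\times H$ by a minimally embedded copy and note that any group commuting with $G$ is contained in $C_{\Sym(9)}(G)$. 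The key point is that $H\le C_{\Sym(9)}(G)$ and $H\cap G=\{1\}$, so it suffices to show that every nontrivial subgroup of $C_{\Sym(9)}(G)$ meets $G$ nontrivially whenever $G$ is minimally embedded in $\Sym(9)$ with $\mu(G)=9$, together with handling the degenerate cases $\mu(G)<9$ separately.

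So the proof splits according to whether the minimal faithful representation of $G$ (inside $\Sym(9)$) is transitive or intransitive. If it is intransitive, Proposition \ref{proposition:intrans9} tells us every nontrivial subgroup of $C:=C_{\Sym(9)}(G)$ intersects $G$ nontrivially, contradicting $H\cap G=\{1\}$. If it is transitive, then either $C$ is trivial — again contradicting that $H\le C$ is nontrivial — or Proposition \ref{proposition:trans9} gives $C\le G$, so once more $H\le C\cap G=\{1\}$, a contradiction. The only remaining possibility is $\mu(G)\le 8$, i.e.\ $G$ does not actually need all $9$ points; then the minimal degree of $G\times H$ is at most $8+\mu(H)$, and since $\mu(G)\ge 7$ we are exactly in the scope of Theorem \ref{theorem:no8} (applied after noting $\mu(G\times H)=\mu(G)+\mu(H)$ would force $\mu(G\times H)\ge 8$), whose proof already shows no such nontrivial $H$ exists. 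In every branch we reach a contradiction, so $\mu(G\times H)=\mu(G)+\mu(H)$, as claimed.

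The main obstacle is bookkeeping rather than mathematics: one must be careful that the copies of $G$ and $H$ sitting inside a \emph{minimal} embedding of $G\times H$ in $\Sym(9)$ are themselves being treated correctly — $G$ as embedded there may fail to be minimally embedded, and its orbits in $\Sym(9)$ need not match a minimal faithful action. The clean way around this is exactly the strategy used throughout Section \ref{section:proof}: work with $C_{\Sym(9)}(G)$ for the given (possibly non-minimal) action, use that $C_{\Sym(9)}(G)$ for a non-minimal action is no larger in the relevant sense, and invoke Proposition \ref{proposition:centraliser} / Corollary \ref{corollary:centraliser2} to control centralisers. Since Propositions \ref{proposition:intrans9} and \ref{proposition:trans9} are stated precisely for minimally embedded $G$ with $\mu(G)=9$, the residual case $\mu(G)\le 8$ must be peeled off first and dispatched by Theorem \ref{theorem:no8}; once that is done, the transitive/intransitive dichotomy closes the argument immediately.
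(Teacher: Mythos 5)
Your handling of the case $\mu(G)=9$ is sound and matches the paper: when $G$ needs all nine points, any faithful copy of $G$ in $\Sym(9)$ is a minimal embedding, so Propositions \ref{proposition:intrans9} and \ref{proposition:trans9} apply and force any centralising subgroup meeting $G$ trivially to be trivial. The gap is in how you dispose of $\mu(G)=7$ and $\mu(G)=8$. You claim these cases are ``exactly in the scope of Theorem \ref{theorem:no8}, whose proof already shows no such nontrivial $H$ exists.'' But Theorem \ref{theorem:no8} has hypothesis $\mu(G\times H)\le 8$ --- precisely what you do not have, since you have reduced to $\mu(G\times H)=9$ --- and its proof computes centralisers inside $\Sym(8)$. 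Inside $\Sym(9)$ the situation genuinely changes: for the exceptional degree-$7$ group $G=\langle (1\,2\,3),(1\,3)(4\,5\,6\,7)\rangle$ of Theorem \ref{theorem:G7}, the centraliser in $\Sym(9)$ is $\langle (4\,5\,6\,7),(8\,9)\rangle$ and it \emph{does} contain nontrivial subgroups meeting $G$ trivially, e.g.\ $\langle (8\,9)\rangle$; so ``no such nontrivial $H$ exists'' is false at degree $9$, and your side remark that the centraliser of a non-minimal embedding ``is no larger in the relevant sense'' is also false.

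What is actually required (and what the paper does) is new work in $\Sym(9)$. For $\mu(G)=7$ one must classify all faithful embeddings of that group into $\Sym(9)$ (ruling out $a=(1\,2\,3)(4\,5\,6)$ and $a=(1\,2\,3)(4\,5\,6)(7\,8\,9)$ because no element of order $4$ inverts them by conjugation), compute $C_{\Sym(9)}(G)=\langle (4\,5\,6\,7),(8\,9)\rangle$, deduce that any admissible $H$ is isomorphic to $C_2$, and only then conclude $\mu(G\times H)<7+2=9$, i.e.\ $\mu(G\times H)\le 8$, which contradicts Theorem \ref{theorem:no8}; so that theorem is the final blow, not a substitute for the analysis. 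For $\mu(G)=8$ one uses Theorem \ref{theorem:class8} to see $G$ contains a copy of $(C_2\times C_2\times C_2)\rtimes C_7$ and then carries out a case analysis of how its involutions and the $7$-element can sit in $\Sym(9)$, concluding $C_{\Sym(9)}(G)=\{1\}$; the triviality of the centralisers in $\Sym(8)$ recorded before Proposition \ref{proposition:class8} does not transfer automatically to the possibly non-minimal embedding in $\Sym(9)$. (Your early assertion that $H$ nontrivial forces $\mu(G)\in\{7,8\}$ is likewise unjustified --- nothing a priori excludes $\mu(G)=9$ with a nontrivial centralising complement, which is exactly what happens at degree $10$ --- though your later treatment of $\mu(G)=9$ renders that slip harmless.)
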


\begin{proof}
Suppose by way of contradiction that there exist nontrivial subgroups $G$, $H$ of $\Sym(9)$ such that $\langle G, H \rangle=GH$ is an internal direct product
and $\mu(G \times H) < \mu(G) +\mu(H)$. 
Without loss of generality, we may suppose $G \not\in \mathscr{C}$.
By Proposition \ref{proposition:class456}, $\mu(G)
\geq 7$. If $\mu(G)=9$, then, by Proposition \ref{proposition:intrans9} and Proposition \ref{proposition:trans9}, $H$ intersects $G$
nontrivially, contradicting that $GH$ is an internal direct product. Hence $\mu(G)=7$ or $\mu(G)=8$. 

Suppose that $\mu(G)=7$. By Theorem \ref{theorem:G7} 
and \eqref{H}, $G \cong \langle a,b\,|\,a^3=b^4=1, a^b=a^{-1} \rangle$. Without loss of
generality, we may take $a=(1\,2\,3), (1\,2\,3)(4\,5\,6)$ or $(1\,2\,3)(4\,5\,6)(7\,8\,9)$, and $b$ to be a permutation of order $4$
that inverts $a$ by conjugation. By straightforward calculations, the only permutations in $\Sym(9)$ that invert $(1\,2\,3)(4\,5\,6)$ or $(1
\ 2\,3)(4\,5\,6)(7\,8\,9)$ have order $2$ or $6$, contradicting that $b$ has order $4$. Hence $a=(1\,2\,3)$. 
 Without loss of generality, $b=(1\,3)(4\,5\,6\,7)$ or $(1\,3)(4\,5\,6
\ 7)(8\,9)$ and $G=\langle a, b \rangle$. Clearly, in either case,  $$C_{\Sym(9)}(G)=\langle (4\,5\,6\,7), (8\,9) \rangle.$$ But $GH$ is an internal direct product, so $H \cap G=\{1\}$ and $H \leq C_{\Sym(9)}(G)$. Since $H$ is non-trivial, it follows
that $H=\langle (8\,9) \rangle$ or $H=\langle (4\,6)(5\,7)(8\,9) \rangle$. In both cases, $H \cong C_2$ so $\mu(H)=2$, giving $$\mu(G \times
H)< \mu(G)+\mu(H)=7+2=9.$$ Hence $\mu(G \times H) \leq 8$,
contradicting Theorem \ref{theorem:no8}.

Thus $\mu(G)=8$. By Theorem \ref{theorem:class8},  $G$ contains a copy of the group 
$K\cong (C_2\times C_2\times C_2)\rtimes C_7$ described explicitly in the preamble preceding Proposition \ref{proposition:class8}. 
All elements of the base group different from 1 are conjugate.
We may take the generators of the base group to be
$x,y$ and $z$ and the generator corresponding to the copy of $C_7$ to be $t$, and then conjugation by $t$ yields the following mapping: $$x \mapsto y \mapsto z \mapsto xy \mapsto yz
\mapsto xyz \mapsto xz \mapsto x.$$ 
Suppose first that $x$ is not a product of 4 disjoint 2-cycles. Without loss of generality we have the following three cases.

\noindent \underline{\textbf{Case (i):}}\,$x=(1\,2)$. \newline
Then $y=(a\,b)$ commutes with $x$ and so is disjoint from $x$, and so $xy=(1\,2)(a\,b)$ is not conjugate to $x$. \newline

\noindent \underline{\textbf{Case (ii):}}\,$x=(1\,2)(3\,4)$. \newline
Without loss of generality, $y=(1\,3)(2\,4)$, $(1\,2)(5\,6)$ or $(5\,6)(7\,8)$. 
If $y=(5\,6)(7\,8)$ then $xy=(1\,2)(3\,4)(5\,6)(7\,8)$ is not conjugate to $x$.
If $y=(1\,3)(2\,4)$ then, 
without loss of generality, $z=(5\,6)(7\,8)$, so $xz$ is not conjugate to $x$. 
If $y=(1\,2)(5\,6)$ then
$z=(1\,2)(7\,8), (3\,4)(7\,8)$ or $(5\,6)(7\,8)$ so that $(1\,2)(3
\ 4)(5\,6)(7\,8)=xyz$, $yz$ or $xz$ respectively is not conjugate to $x$.\newline

\noindent \underline{\textbf{Case (iii):}}\,$x=(1\,2)(3\,4)(5\,6)$. \newline
Without loss of generality, $y=(1\,2)(3\,4)(7\,8)$, $(1\,3)(2\,4)(5\,6)$ or $(1\,3)(2\,4)(7\,8)$. Then $xy=(5\,6)(7\,8)$, $(1\,4)(2 \
3)$ or $(1\,4)(2\,3)(5\,6)(7\,8)$ respectively is not conjugate to $x$. \vspace{0pt}

\noindent
All of these cases lead to a contradiction, so, without loss of generality,
$x=(1\,2)(3\,4)(5\,6)(7\,8)$. Now both $y$ and $z$ fix $9$ since $x=x^y=x^z$. If $t$ moves
$9$, then $x^t$ moves $9$, contradicting that $x^t=z$ fixes $9$. Hence $t$ also fixes $9$ and so $t$ is a $7$-cycle permuting letters amongst
$\{1,\ldots,8\}$. Therefore, $t$ fixes another letter and so, without loss of generality, $t$ is a $7$-cycle permuting $1,\ldots, 7$ in some
order. Let $w \in C:=C_{\Sym(9)}(G)$. Then $w$ commutes with $t$, so $w=t^r$, or $w=t^{r}(8\,9)$ for some $r$. But if
$w=t^{r}(8\,9)$, then $x^w$ moves $9$, so $x^w \neq x$, contradicting that $w$ commutes with $x$. Hence $w=t^r$, which implies $w=1$, since non-trivial
powers of $t$ do not commute with $x$. Thus $H \leq C=\{1\}$, so $H$ is trivial, a contradiction. This completes the proof of the theorem.
\end{proof}

The string of results above show that there are no examples of groups $G$ and $H$ such that $\mu(G \times H) \leq 9$ and \begin{equation} \mu(G \times H) < \mu(G) + \mu(H). \label{equation:strict} \end{equation}   
In $\Sym(10)$, however, $G$ can be taken to be any split extension of the deleted permutation module for $\Sym(5)$ over $\mathbb{F}_2$ by a subgroup that contains an element of order $5$. It is well-known that there are 5 such choices for the top group of the split extension, namely  $C_5, D_5, C_5 \rtimes C_4, \Alt(5)$ or $\Sym(5)$. So, for the example of smallest order, one takes $G$ to be $(C_2 \times C_2 \times C_2 \times C_2) \rtimes C_5$ and $H$ to be $C_2$ (its centraliser in $\Sym(10)$), and all examples have the property that  $\mu(G)=\mu(G \times H) < \mu(G) + \mu(H)$. \vspace{0pt}

The authors are not aware of any
examples of groups $G$ and $H$ that do not decompose as nontrivial direct products for which \begin{equation} \max\{\mu(G),\mu(H)\} < \mu(G \times H) < \mu(G) + \mu(H). \label{equation:cascade} \end{equation}  
One can easily transform \eqref{equation:strict} into an infinite class of examples of \eqref{equation:cascade} by taking direct products with a new group of order coprime to both $G$ and $H$ (see \cite[Section 7]{S14}). 

\section{acknowledgements}
The authors thank Cheryl Praeger for helpful discussions in catalysing the proofs in some of the results above, particularly in the transitive cases. \vspace{0pt}

Part of this work was completed while the second author was on a research visit to the University of Sydney. He sincerely thanks Anthony Henderson for supporting this research visit. 

\newpage

\appendix
\section{\\ Minimal Degrees of Groups of Small Order} \label{App:AppA}
Below is a table of minimal permutation degrees of groups of small order which we rely on in the article. Many of these calculations can be done by hand, or can easily be verified using Magma \cite{BCFS10}. The group identification is done using the SmallGroupsLibrary in Magma.

\begin{center}
\begin{longtable}{|l|l|c|l|}
\caption{Minimal Degrees of Groups of Small Order}\\
\hline
\textbf{Group ID} & \textbf{Stucture} & \textbf{Minimal Degree} & \textbf{Contained in $\mathscr{C}$?} \\
\hline
\endfirsthead
\multicolumn{4}{c}%
{\tablename\ \thetable\ -- \textit{Continued from previous page}} \\
\hline
\textbf{Group ID} & \textbf{Stucture} & \textbf{Minimal Degree} & \textbf{Contained in $\mathscr{C}$?} \\

\hline
\endhead
\hline \multicolumn{4}{r}{\textit{Continued on next page}} \\
\endfoot
\hline
\endlastfoot
$(56,1)				$     &           $	[C_7 : C_8]$   								&           $	15	$              &  No 									\\	
$(56,2)				$     &           $Ê	[C_{56}]$    								&           $Ê	15	$              &  Yes Abelian								\\
$(56,3)				$     &           $	[C_7 : Q_8]$								&           $Ê	15	$              &  No										\\
$(56,4)				$     &           $	[C_4 \times D_{14}]$							&           $	11	$              &  Yes									\\ 
$(56,5)				$     &           $	[D_{56}]$			    						&           $	11	$              &  Yes									\\
$(56,6)				$     &           $	[C_2 \times (C_7 : C_4)]	$	    				&           $	13	$              &  No										\\
$(56,7)Ê				$     &           $	[(C_{14} \times C_2) : C_2]$	 	   			&           $	11	$              &  Yes									\\
$(56,8)				$     &           $	[C_{28} \times C_2]$		    					&           $	13	$              &  Yes	Abelian							\\
$(56,9)				$     &           $	[C_7 \times D_8]$			    				&           $	11	$              &  Yes	Nilpotent							\\
$(56,10)				$     &           $	[C_7 \times Q_8]  $		   	 				&	     $	15	$              &  Yes	Nilpotent							\\
$(56,11)				$     &           $	[(C_2 \times C_2 \times C_2) : C_7]$   	 		&           $Ê	8	$              &  No										\\
$(56,12)Ê				$     &           $	[C_2 \times C_2 \times D_{14}]$   		 		&           $Ê	11	$              &  Yes									\\	
$(56,13)				$     &           $	[C_{14} \times C_2 \times C_2]$      				&           $Ê13	$              &  Yes	Abelian							\\	
$(54,1)				$     &           $	[D_{54}]$    				     		 		&           $Ê	27	$              &  Yes									\\	
$(54,2)				$     &           $	[C_{54}]$									&           $Ê	29	$              &  Yes	Abelian							\\
$(54,3)				$     &           $	[C_3 \times D_{18}]$     						&           $Ê	12	$              &  Yes									\\
$(54,4)				$     &           $	[C_9 \times \Sym(3)]$  						&           $Ê	12	$              &  Yes									\\
$(54,5)				$     &           $	[((C_3 \times C_3) : C_3) : C_2]$				&           $Ê	9	$              &  Yes									\\
$(54,6)				$     &           $	[(C_9 : C_3) : C_2]$ 	  						&           $Ê	9	$              &  Yes									\\
$(54,7)				$     &           $	[(C_9 \times C_3) : C_2]$     					&           $Ê	12	$              &  Yes									\\
$(54,8)				$     &           $	[((C_3 \times C_3) : C_3) : C_2]$    				&           $Ê	9	$              &  Yes									\\
$(54,9)				$     &           $	[C_{18} \times C_3]$     						&  	     $Ê	14	$              &  Yes	Abelian							\\
$(54,10)				$     &           $	[C_2 \times ((C_3 \times C_3) : C_3)]$     			&          $Ê	11	$              &  Yes	Nilpotent							\\
$(54,11)				$     &           $	[C_2 \times (C_9 : C_3)]$   					&           $Ê	11	$              &  Yes	Nilpotent							\\
$(54,12)Ê				$     &           $	[C_3 \times C_3 \times \Sym(3)]$     				&           $Ê	9	$              &  Yes									\\
$(54,13)				$     &           $	[C_3 \times ((C_3 \times C_3) : C_2)]$ 			&           $Ê	9	$              &  Yes									\\
$(54,14) 				$     &           $ 	[(C_3 \times C_3 \times C_3) : C_2]$			&           $Ê	9 	$              &  Yes									\\
$(54,15)				$     &           $	[C_6 \times C_3 \times C_3]$     				&           $Ê	11	$              &  Yes	Abelian							\\
$(48,1)				$     &           $Ê	[C_3 : C_{16}]$    							&           $Ê	19	$              &  No										\\
$(48,2) 				$     &           $	[C_{48}]$    								&           $Ê	19	$              &  Yes	Abelian							\\
$(48,3)Ê				$     &           $	[(C_4 \times C_4) : C_3]$     					&           $Ê	12	$              &  No										\\
$(48,4)Ê				$     &           $	[C_8 \times \Sym(3)]$     						&           $Ê	11	$              &  Yes									\\
$(48,5)Ê				$     &           $	[C_{24} : C_2]$			     					&           $Ê	11	$              &  Yes									\\
$(48,6)Ê				$     &           $	[C_{24} : C_2]$     							&           $Ê	11	$             &  Yes										\\
$(48,7)Ê				$     &           $	[D_{48}]$     								&           $Ê	11	$              &  Yes									\\
$(48,8)Ê				$     &           $	[C_3 : Q_{16}]$     							&           $Ê	19	$              &  No										\\
$(48,9)				$     &           $Ê	[C_2 \times (C_3 : C_8)]$	     					&           $Ê	13	$              &  No										\\
$(48,10)				$     &           $Ê	[(C_3 : C_8) : C_2]$    						&           $Ê	11	$              &  No										\\	
$(48,11)	Ê			$     &           $	[C_4 \times (C_3 : C_4)]$         		     			&           $Ê	11	$              &  No										\\
$(48,12)	Ê			$     &           $	[(C_3 : C_4) : C_4]$     						&           $Ê	11	$              &  No										\\	
$(48,13)	Ê			$     &           $	[C_{12} : C_4]$     							&           $Ê	11	$              &  No										\\
$(48,14)				$     &           $	[(C_{12} \times C_2) : C_2]$    					&	     $Ê	11	$              &  No										\\
$(48,15)	Ê			$     &           $	[(C_3 \times D_8) : C_2]$ 						&           $Ê	11	$              &  No										\\
$(48,16)	Ê			$     &           $	[(C_3 : C_8) : C_2]$ 							&           $Ê	11	$              &  No										\\
$(48,17)	Ê			$     &           $	[(C_3 \times Q_8) : C_2] $ 			       		&           $Ê	11	$              &  No										\\
$(48,18)				$     &           $Ê	[C_3 : Q_{16}]$     							&           $Ê	19	$              &  No										\\
$(48,19)	Ê			$     &           $	[(C_2 \times (C_3 : C_4)) : C_2]$	    	 		&           $Ê	11	$              &  No										\\
$(48,20)	Ê			$     &           $	[C_{12} \times C_4]$		    	 				&           $Ê	11	$              &  Yes	Abelian							\\
$(48,21)	Ê			$     &           $	[C_3 \times ((C_4 \times C_2) : C_2)]$   			&   		$Ê11	$              &  Yes	Nilpotent							\\	
$(48,22)	Ê			$     &           $	[C_3 \times (C_4 : C_4)]$     					&     		$Ê11	$              &  Yes	Nilpotent							\\
$(48,23)				$     &           $Ê	[C_{24} \times C_2]$     						&           $Ê	13	$              &  Yes	Abelian							\\
$(48,24)				$     &           $	[C_3 \times (C_8 : C_2)]$     					&     		$Ê11	$              &  Yes	Nilpotent							\\
$(48,25)	Ê			$     &           $	[C_3 \times D_{16}]$     						&           $Ê	11	$              &  Yes									\\	
$(48,26)	Ê			$     &           $	[C_3 \times QD_{16}]$     						&           $Ê	11	$              &  Yes									\\			
$(48,27)	Ê			$     &           $	[C_3 \times Q_{16}]$     						&           $Ê	19	$              &  Yes	Nilpotent							\\
$(48,28)	Ê			$     &           $ 	[SL(2,3) \rightarrow G \rightarrow C_2]$     		&        $Ê16	$              &  Yes 									\\
$(48,29)	Ê			$     &           $	[GL(2,3)]$    								&           $Ê	8	$              &  Yes									\\
$(48,30)	Ê			$     &           $	[\Alt(4) : C_4]$ 			    					&           $Ê	8	$              &  Yes									\\
$(48,31)	Ê			$     &           $	[C_4 \times \Alt(4)]$     						&           $Ê	8	$              &  Yes									\\
$(48,32)	Ê			$     &           $	[C_2 \times SL(2,3)]$     						&    		$Ê10	$              &  Yes									\\
$(48,33)	Ê			$     &           $	[SL(2,3) : C_2]$     							&     		$16	$	     	&  No									\\
$(48,34)	Ê			$     &           $	[C_2 \times (C_3 : Q_8)]$     					&        $13		$     		&  No									\\
$(48,35)				$     &           $	[C_2 \times C_4 \times \Sym(3)]$     				&      	  $Ê	9	$              &  Yes									\\
$(48,36)	Ê			$     &           $	[C_2 \times D_{24}]$    						&           $Ê	9	$              &  Yes									\\
$(48,37)	Ê			$     &           $	[(C_{12} \times C_2) : C_2]$     					&       	$11	$  		&  No									\\
$(48,38)	Ê			$     &           $	[D_8 \times \Sym(3)]$    						&		$Ê7$              	&  Yes									\\
$(48,39)	Ê			$     &           $	[(C_2 \times (C_3 : C_4)) : C_2]$ 	   			&           $Ê	11	$              &  No										\\
$(48,40)	Ê			$     &           $	[Q_8 \times \Sym(3)]$     						&           $Ê	11	$              &  Yes									\\
$(48,41)	Ê			$     &           $	[(C_4 \times \Sym(3)) : C_2]$     				&           $Ê	11	$              &  Yes									\\
$(48,42)	Ê			$     &           $	[C_2 \times C_2 \times (C_3 : C_4)]$     			&           $Ê	11	$              &  No										\\
$(48,43)				$     &           $Ê	[C_2 \times ((C_6 \times C_2) : C_2)]$		   	&           $Ê	9	$              &  Yes									\\
$(48,44)				$     &           $Ê	[C_{12} \times C_2 \times C_2]$  				&           $Ê	11	$              &  Yes	Abelian							\\
$(48,45)				$     &           $Ê	[C_6 \times D_8]$     						&           $Ê	9	$              &  Yes	Nilpotent							\\
$(48,46)				$     &           $Ê	[C_6 \times Q_8]$     						&           $Ê	13	$              &  Yes	Nilpotent							\\
$(48,47)				$     &           $Ê	[C_3 \times ((C_4 \times C_2) : C_2)]$	    		&           $Ê	11	$              &  Yes	Nilpotent							\\
$(48,48)				$     &           $Ê	[C_2 \times \Sym(4)]$     						&           $Ê	6	$              &  Yes									\\
$(48,49)				$     &           $Ê	[C_2 \times C_2 \times \Alt(4)]$  	   			&           $Ê	8	$              &  Yes									\\	
$(48,50)				$     &           $Ê	[(C_2 \times C_2 \times C_2 \times C_2) : C_3]$ 	&     	     $Ê	8	$    		&  Yes									\\
$(48,51)				$     &           $Ê	[C_2 \times C_2 \times C_2 \times \Sym(3)]$     	&           $Ê	9	$              &  Yes									\\
$(48,52)				$     &           $Ê	[C_6 \times C_2 \times C_2 \times C_2]$   	  	&           $	11	$              &  Yes	Abelian							\\
$(40,1)	Ê			$     &           $	[C_5 : C_8]$     								&           $Ê	13	$              &  No										\\
$(40,2)				$     &	    $	[C_{40}]$     								&           $Ê	13	$              &  Yes	Abelian							\\
$(40,3)	Ê			$     	&           $	[C_5 : C_8]$    				 				&           $Ê	13	$              &  No										\\
$(40,4)	Ê			$     	&           $	[C_5 : Q_8]$    								&           $Ê	13	$              &  No										\\
$(40,5)	Ê			$     	&           $	[C_4 \times D_{10}]$     						&           $Ê	9	$              &  Yes									\\
$(40,6)	Ê			$     	&           $	[D_{40}]$     								&           $Ê	9	$              &  Yes									\\
$(40,7)	Ê			$     	&           $	[C_2 \times (C_5 : C_4)]$     					&           $Ê	11	$              &  No										\\
$(40,8)	Ê			$     	&           $	[(C_{10} \times C_2) : C_2]$     					&           $Ê	9	$              &  Yes									\\	
$(40,9)Ê				$     	&           $	[C_{20} \times C_2]$     						&	      $Ê11	$              &  Yes	Abelian							\\
$(40,10)				$     	&           $Ê	[C_5 \times D_8]$     						&           $Ê	9	$              &  Yes	Nilpotent							\\
$ (40,11)				$     	&           $	[C_5 \times Q_8]$     						&           $Ê	13	$              &  Yes	Nilpotent							\\
$(40,12)				$     	&           $	[C_2 \times (C_5 : C_4)]$     					&           $Ê	7	$              &  Yes									\\
$(40,13)Ê				$     	&           $	[C_2 \times C_2 \times D_{10}]$     				&           $Ê	9	$              &  Yes									\\
$(40,14)				$     	&           $	[C_{10} \times C_2 \times C_2]$     				&           $Ê	11	$              &  Yes	Abelian							\\
$(36,1)	Ê			$     	&           $	[C_9 : C_4]$     								&           $Ê	13	$              &  No										\\		
$(36,2)	Ê			$     	&           $	[C_{36}]$     								&           $Ê	13	$              &  Yes	Abelian							\\		
$(36,3)	Ê			$     	&           $	[(C_2 \times C_2) : C_9]$     					&           $Ê	13	$              &  No										\\
$(36,4)	Ê			$     	&           $	[D_{36}]$     								&           $Ê	11	$              &  Yes									\\
$(36,5)	Ê			$     	&           $	[C_{18} \times C_2]$     						&           $Ê	13	$              &  Yes	Abelian							\\
$(36,6)	Ê			$     	&           $	[C_3 \times (C_3 : C_4)]$     					&           $Ê	10	$              &  No										\\
$(36,7)	Ê			$     	&           $	[(C_3 \times C_3) : C_4]$     					&           $Ê	10	$              &  No										\\
$(36,8)	Ê			$     	&           $	[C_{12} \times C_3]$     						&           $Ê	10	$		& Yes Abelian								\\
$(36,9)Ê				$     	&           $	[(C_3 \times C_3) : C_4]$	     					&           $Ê	6	$              &  Yes									\\
$(36,10)Ê				$     	&           $	[\Sym(3) \times \Sym(3)]$     					&           $Ê	6	$             	&  Yes									\\
$(36,11)Ê				$     	&           $	[C_3 \times \Alt(4)]$     						&           $Ê	7	$          	&  Yes									\\
$(36,12)				$     	&           $Ê	[C_6 \times \Sym(3)]$    				 		&           $Ê	8	$     		&  Yes									\\
$(36,13)				$     	&           $Ê	[C_2 \times ((C_3 \times C_3) : C_2)]$	   		&           $Ê	8	$             	&  Yes									\\
$(36,14)				$     	&           $Ê	[C_6 \times C_6]$     						&           $	10	$     	        &  Yes	Abelian							\\
$(32,1)Ê				$     	&           $	[C_{32}]$     								&           $Ê	32	$		& Yes 	Abelian							\\
$(32,2)Ê				$     	&           $	[(C_4 \times C_2) : C_4]$     					&           $Ê	12	$     	 	&  Yes	Nilpotent							\\
$(32,3)Ê				$     	&           $	[C_8 \times C_4]$     						&           $Ê	12	$		& Yes	Abelian							\\
$(32,4)Ê				$     	&           $	[C_8 : C_4]$     								&	     $Ê	12	$     	  	&  Yes	Nilpotent							\\
$(32,5)Ê				$     	&           $	[(C_8 \times C_2) : C_2]$     					&           $Ê	12	$     	  	&  Yes	Nilpotent							\\
$(32,6)Ê				$     	&           $	[((C_4 \times C_2) : C_2) : C_2]$     				&           $Ê	8	$     	  	&  Yes	Nilpotent							\\
$(32,7)Ê				$     	&           $	[(C_8 : C_2) : C_2]$     						&           $Ê	8	$  	     	&  Yes	Nilpotent							\\
$(32,8)Ê				$     	&           $	[C_2 . ((C_4 \times C_2) : C_2) = (C_2 \times C_2) . (C_4 \times C_2)]$     &           $Ê16 $              &  Yes	Nilpotent				\\
$(32,9)Ê				$     	&           $	[(C_8 \times C_2) : C_2]$     					&           $Ê	12	$              &  Yes	Nilpotent							\\
$(32,10)				$     	&           $Ê	[Q_8 : C_4]$     							&           $Ê	12	$              &  Yes	Nilpotent							\\
$(32,11)Ê				$     	&           $	[(C_4 \times C_4) : C_2]$     					&           $Ê	8	$              &  Yes	Nilpotent							\\
$(32,12)				$     	&           $Ê	[C_4 : C_8]$     								&      	$Ê	12	$              &  Yes	Nilpotent							\\
$(32,13)				$     	&           $Ê	[C_8 : C_4]$     								&           $Ê	12	$              &  Yes	Nilpotent							\\
$(32,14)				$     	&           $Ê	[C_8 : C_4]$     								&           $Ê	12	$              &  Yes	Nilpotent							\\
$(32,15)				$     	&           $Ê	[C_4 . D_8 = C_4 . (C_4 \times C_2)]$     			&           $Ê	16	$              &  Yes	Nilpotent							\\
$(32,16)Ê				$     	&           $	[C_{16} \times C_2]$    						&           $Ê	18$			& Yes	Abelian							\\
$(32,17)Ê				$     	&           $	[C_{16} : C_2]$     							&           $Ê	16	$              &  Yes	Nilpotent							\\
$(32,18)Ê				$     	&           $	[D_{32}]$     								&           $Ê	16	$              &  Yes	Nilpotent							\\
$(32,19)Ê				$     	&           $	[QD_{32}]$     								&           $Ê	16	$              &  Yes	Nilpotent							\\
$(32,20)Ê				$     	&           $	[Q_{32}]$     								&           $Ê	32	$              &  Yes	Nilpotent							\\
$(32,21)Ê				$     	&           $	[C_4 \times C_4 \times C_2]$   		  			&           $Ê	10$			& Yes	Abelian							\\
$(32,22)Ê				$     	&           $	[C_2 \times ((C_4 \times C_2) : C_2)]$		   	&           $Ê	10$              	&  Yes	Nilpotent							\\
$(32,23)Ê				$     	&           $	[C_2 \times (C_4 : C_4)]$     					&           $Ê	10 $              	&  Yes	Nilpotent							\\
$(32,24)Ê				$     	&           $	[(C_4 \times C_4) : C_2]$	 				    	&           $Ê	12$              	&  Yes	Nilpotent							\\
$(32,25)Ê				$     	&           $	[C_4 \times D_8]$    		 					&           $Ê	8$              	&  Yes	Nilpotent							\\
$(32,26)Ê				$     	&           $	[C_4 \times Q_8]$     						&           $Ê	12$             	&  Yes	Nilpotent							\\
$(32,27)Ê				$     	&           $	[(C_2 \times C_2 \times C_2 \times C_2) : C_2]$     	&           $Ê	8$              	&  Yes	Nilpotent							\\
$(32,28)Ê				$     	&           $	[(C_4 \times C_2 \times C_2) : C_2]$     			&           $Ê	8$              	&  Yes	Nilpotent							\\
$(32,29)				$     	&           $Ê	[(C_2 \times Q_8) : C_2]$     					&           $Ê	12$              	&  Yes	Nilpotent							\\
$(32,30)				$     	&           $Ê	[(C_4 \times C_2 \times C_2) : C_2]$     			&           $Ê	12$              	&  Yes	Nilpotent							\\
$(32,31)				$     	&           $Ê	[(C_4 \times C_4) : C_2]$     					&           $Ê	12$              	&  Yes	Nilpotent							\\
$(32,32)				$     	&           $Ê	[(C_2 \times C_2) . (C_2 \times C_2 \times C_2)]$    &  	     $Ê	16$              	&  Yes	Nilpotent							\\
$(32,33)				$     	&           $Ê	[(C_4 \times C_4) : C_2]$    	 				&           $Ê	16$              	&  Yes	Nilpotent							\\
$(32,34)				$     	&           $Ê	[(C_4 \times C_4) : C_2]$   	  				&           $Ê	8$              	&  Yes	Nilpotent							\\
$(32,35)				$     	&           $Ê	[C_4 : Q_8]$    				 				&           $Ê	12$              	&  Yes	Nilpotent							\\
$(32,36)				$     	&           $Ê	[C_8 \times C_2 \times C_2]$   		  			&           $Ê	12$			& Yes 	Abelian							\\
$(32,37)				$     	&           $Ê	[C_2 \times (C_8 : C_2)]$     					&           $Ê	10	$              	&  Yes	Nilpotent							\\
$(32,38)				$     	&           $Ê	[(C_8 \times C_2) : C_2]$     					&           $Ê	16	$              	&  Yes	Nilpotent							\\
$(32,39)				$     	&           $Ê	[C_2 \times D_{16}]$     						&           $Ê	10	$              	&  Yes	Nilpotent							\\
$(32,40)				$     	&           $Ê	[C_2 \times QD_{16}]$     						&           $Ê	10	$              	&  Yes	Nilpotent							\\
$(32,41)				$     	&           $Ê	[C_2 \times Q_{16}]$     						&           $Ê	18	$              	&  Yes	Nilpotent							\\
$(32,42)				$     	&           $Ê	[(C_8 \times C_2) : C_2]$     					&           $Ê	16	$              	&  Yes	Nilpotent							\\
$(32,43)				$     	&           $Ê	[(C_2 \times D_8) : C_2]$     					&           $Ê	8	$             	&  Yes	Nilpotent							\\
$(32,44)				$     	&           $Ê	[(C_2 \times Q_8) : C_2]$     					&           $Ê	16	$              	&  Yes	Nilpotent							\\
$(32,45)				$     	&           $Ê	[C_4 \times C_2 \times C_2 \times C_2]$		     	&           $Ê	10$			& Yes 	Abelian							\\
$(32,46)				$     	&           $Ê	[C_2 \times C_2 \times D_8]$     				&           $Ê	8	$              	&  Yes 	Nilpotent							\\
$(32,47)				$     	&           $Ê	[C_2 \times C_2 \times Q_8]$     				&           $Ê	12	$              	&  Yes	Nilpotent							\\
$(32,48)				$     	&           $Ê	[C_2 \times ((C_4 \times C_2) : C_2)]$	   		&           $Ê	10	$             	&  Yes	Nilpotent							\\
$(32,49)				$     	&           $Ê	[(C_2 \times D_8) : C_2]$     					&           $Ê	8$              	&  Yes	Nilpotent							\\
$(32,50)				$     	&           $Ê	[(C_2 \times Q_8) : C_2]$     					&           $Ê	16$              	&  Yes	Nilpotent							\\
$(32,51)				$     	&           $Ê	[C_2 \times C_2 \times C_2 \times C_2 \times C_2]$	&           $	10$			&	 Yes 	Abelian							\\
$(28,1)				$     	&           $Ê	[C_7 : C_4]$    				 				&           $Ê	11$              	&  No									\\
$(28,2)				$     	&           $Ê	[C_{28}]$     								&           $Ê	11$           		&  Yes	Abelian							\\
$(28,3)				$     	&           $Ê	[D_{28}]$    	 							&           $Ê	9$            		&  Yes									\\
$(28,4)				$     	&           $Ê	[C_{14} \times C_2]$     						&           $Ê	11$           		&  Yes	Abelian							\\
$(27,1)				$     	&           $Ê	[C_{27}]$     								&           $Ê	27$			&Yes 	Abelian							\\
$(27,2)				$     	&           $	[C_9 \times C_3]$     						&           $Ê	12$			& Yes 	Abelian							\\
$(27,3)Ê				$     	&           $	[(C_3 \times C_3) : C_3]$     					&           $Ê	9$ 	             	&  Yes	Nilpotent							\\
$(27,4)Ê				$     	&           $	[C_9 : C_3]$     								&           $Ê	9$	              	&  Yes	Nilpotent							\\
$(27,5)Ê				$     	&           $	[C_3 \times C_3 \times C_3]$   		 			&           $Ê	9$			& Yes	Abelian 							\\
$(24,1)Ê				$     	&      	     $	[C_3 : C_8]$   				  				&           $Ê	11	$              &  No										\\
$(24,2)Ê				$     	&	     $	[C_{24}]$     								&           $Ê	11	$              &  Yes	Abelian							\\
$(24,3)Ê				$     	&           $	[SL(2,3)]$    				 				&           $Ê	8	$              &  Yes									\\
$(24,4)				$     	&           $Ê	[C_3 : Q_8]$     							&           $Ê	11	$              &  No										\\
$(24,5)				$     	&           $Ê	[C_4 \times \Sym(3)]$ 	    					&           $Ê	7	$              &  Yes									\\
$(24,6)				$     	&           $Ê	[D_{24}]$     								&           $Ê	7	$              &  Yes									\\
$(24,7)				$     	&           $Ê	[C_2 \times (C_3 : C_4)]$    					&           $Ê	9	$              &  No										\\
$(24,8)Ê				$     	&           $	[(C_6 \times C_2) : C_2]$     					&           $Ê	7	$              &  Yes									\\
$(24,9)Ê				$     	&           $	[C_{12} \times C_2]$     						&           $Ê	9	$              &  Yes	Abelian							\\
$(24,10)				$     	&           $Ê	[C_3 \times D_8]$     						&           $Ê	7	$              &  Yes	Nilpotent							\\
$(24,11)				$     	&           $Ê	[C_3 \times Q_8]$     						&           $Ê	11	$              &  Yes	Nilpotent							\\
$(24,12)				$     	&           $Ê	[\Sym(4)]$     								&           $Ê	4	$              &  Yes									\\
$(24,13)				$     	&           $Ê	[C_2 \times \Alt(4)]$     						&           $Ê	6	$              &  Yes									\\
$(24,14)				$     	&           $Ê	[C_2 \times C_2 \times \Sym(3)]$     				&           $Ê	7	$              &  Yes									\\
$(24,15)				$     	&           $Ê	[C_6 \times C_2 \times C_2]$    			 	&           $Ê	9	$              &  Yes	Abelian							\\
$(20,1)Ê				$     	&           $	[C_5 : C_4]$     								&           $Ê	9	$              &  No										\\
$(20,2)Ê				$     	&           $	[C_{20}]$     								&           $Ê	9	$              &  Yes	Abelian							\\
$(20,3)Ê				$     	&           $	[C_5 : C_4]$     								&           $Ê	5	$              &  Yes									\\
$(20,4)Ê				$     	&           $	[D_{20}]$    								 &           $Ê7$ 		   	&  Yes									\\
$(20,5)Ê				$     	&           $	[C_{10} \times C_2]$     						&           $Ê	9	$              	& Yes	Abelian							\\
$(18,1)				$     	&	$	[D_{18}]$ 									&	    $	9	$		&Yes										\\
$(18,2) 				$	&	$	[C_{18}] $									&		$11$			&Yes 	Abelian							\\
$(18,3)				$	&	$	[C_3 \times \Sym(3)]$ 						&		$6$			&Yes										\\
$(18,4) 				$	&	$	(C_3 \times C_3) : C_2]$ 						&		$6$			&Yes										\\
$(18,5) 				$	&	$	[C_6 \times C_3]$ 							&		$8$			&Yes 	Abelian							\\
$(16,1)Ê				$     	&           $	[C_{16}]$     								&           $Ê	16	$              &  Yes	Abelian							\\
$(16,2)Ê				$     	&           $	[C_4 \times C_4]$     						&           $Ê	8	$              &  Yes	Abelian							\\
$(16,3)Ê				$     	&           $	[(C_4 \times C_2) : C_2]$     					&           $Ê	8	$              &  Yes	Nilpotent							\\
$(16,4)Ê				$     	&           $	[C_4 : C_4]$     								&           $Ê	8	$              &  Yes	Nilpotent							\\
$(16,5)Ê				$     	&           $	[C_8 \times C_2]$     						&           $Ê	10	$              &  Yes	Abelian							\\
$(16,6)Ê				$     	&           $	[C_8 : C_2]$     								&           $Ê	8	$              &  Yes	Nilpotent							\\
$(16,7)Ê				$     	&           $	[D_{16}]$     								&           $Ê	8	$              &  Yes	Nilpotent							\\
$(16,8)Ê				$     	&           $	[QD_{16}]$     								&           $Ê	8	$              &  Yes	Nilpotent							\\
$(16,9)Ê				$     	&           $	[Q_{16}]$     								&           $Ê	16	$              &  Yes	Nilpotent							\\
$(16,10)				$     	&           $	[C_4 \times C_2 \times C_2]$	     				&           $Ê	8	$              &  Yes	Abelian							\\
$(16,11)Ê				$     	&           $	[C_2 \times D_8]$     						&           $Ê	6	$              &  Yes	Nilpotent							\\
$(16,12)Ê				$     	&           $	[C_2 \times Q_8]$     						&           $Ê	10	$              &  Yes	Nilpotent							\\
$(16,13)Ê				$    	&           $	[(C_4 \times C_2) : C_2]$     					&           $Ê	8	$              &  Yes	Nilpotent							\\
$(16,14)				$     	&           $Ê	[C_2 \times C_2 \times C_2 \times C_2]$     		&           $Ê	8	$              &  Yes	Abelian							\\
$(12,1)Ê				$     	&           $	[C_3 : C_4]$     								&           $Ê	7	$              &  No										\\
$(12,2)Ê				$     	&           $	[C_{12}]$    			 					&           $Ê	7	$              &  Yes	Abelian							\\
$(12,3)Ê				$     	&           $	[\Alt(4)]$     								&           $Ê	4	$              &  Yes									\\
$(12,4)Ê				$     	&           $	[D_{12}]$     								&           $Ê	5	$              &  Yes									\\
$(12,5)				$     	&           $Ê	[C_6 \times C_2]$     						&           $Ê	7	$              &  Yes	Abelian

\end{longtable}
\end{center}

\newpage
\bibliographystyle{plain}

\end{document}